\newcites{App}{References}
\newcites{Main}{References}
\newtheorem{thm}{Theorem}[section]
\newtheorem{lem}[thm]{Lemma}
\newtheorem{prop}[thm]{Proposition}
\newtheorem{cor}[thm]{Corollary}
\theoremstyle{definition}
\newtheorem{defn}[thm]{Definition}
\newtheorem{exmp}[thm]{Example}
\newtheorem{nota}[thm]{Notation}
\theoremstyle{remark}
\newtheorem{rem}[thm]{Remark}
\numberwithin{equation}{section}
\renewcommand{\P}{\mathbf{P}}
\newcommand{\Xbar}{\overline{X}}
\newcommand{\Ybar}{\overline{Y}}
\newcommand{\Cbar}{\overline{C}}
\newcommand{\Dbar}{\overline{D}}
\newcommand{\hirz}[1]{\Sigma_{#1}}
\newcommand{\spec}{\textrm{Spec}}
\newcommand{\Bl}[2]{\textrm{Bl}_{#1}(#2)}
\newcommand{\cP}{\mathcal{P}}
\newcommand{\cQ}{\mathcal{Q}}
\newcommand{\Jac}{\textrm{Jac}}
\newcommand{\Spec}{\textrm{Spec}}
\renewcommand{\L}{\mathcal{L}}
\renewcommand{\O}{\mathcal{O}}
\newcommand{\I}{\mathcal{I}}
\newcommand{\F}{\mathbf{F}}
\newcommand{\Fq}{\F_q}
\newcommand{\Fqbar}{\overline{\F}_q}
\renewcommand{\H}{\mathrm{H}}
\newcommand{\Hom}{\mathrm{Hom}}
\newcommand{\AGCode}[3]{C(#1,#2,#3)}
\newcommand{\wt}[1]{\textrm{w}_H\left(#1\right)}
\newcommand{\suppt}[1]{\ensuremath{\textbf{Supp}(#1)}}
\newcommand{\sesh}[3]{\varepsilon(#1,#2,#3)} 
\newcommand{\map}[4]{\left\{
\begin{array}{ccc}
#1 & \longrightarrow & #2 \\
#3 & \longmapsto     & #4
        \end{array}
      \right.} 
\newcommand{\CH}{\textrm{CH}}
\renewcommand{\geq}{\geqslant}
\renewcommand{\leq}{\leqslant}
\def\ds{{\displaystyle}}
\def\CF{{\mathcal F}}
\def\CG{{\mathcal G}}
\def\ZZ{{\mathbb Z}}
\def\hX{{{X}_{x}^h}}
\def\hU{{U}_{x}}
\def\Q{{\mathbf Q}}
\def\commutatif{\ar@{}[rd]|{\circlearrowleft}}
\renewcommand{\O}{\mathcal{O}}
\newcommand{\liso}{\mathrel{\hbox{$\longrightarrow$} \kern-2.4ex\lower-1ex\hbox{$\scriptstyle\sim$}\kern1.7ex}}
\newcommand{\addresseshere}{
  \enddoc@text\let\enddoc@text\relax
}
\begin{document}

\title{Toward good families of codes from towers of
  surfaces}

\author{Alain Couvreur}
\address{INRIA \& Laboratoire LIX,
  CNRS UMR 7161,\'Ecole Polytechnique, 91120 {\sc Palaiseau Cedex}}
\curraddr{}
\email{alain.couvreur@inria.fr}
\thanks{}

\author{Philippe Lebacque}
\address{Laboratoire de Mathématiques de Besan\c{c}on, UFR Sciences et
  techniques, 16 route de Gray, 25030 Besançon, France}
\curraddr{}
\email{philippe.lebacque@univ-fcomte.fr}
\thanks{}

\author{Marc Perret} \address{Marc Perret, Institut de Math\'ematiques
  de Toulouse, UMR 5219, Universit\' e de Toulouse, CNRS, UT2J,
  F-31058 Toulouse, France} \email{perret@univ-tlse2.fr}
\thanks{Funded by ANR grant ANR-15-CE39-0013 ``Manta'', the ANR grant
  ANR-17-CE40-0012 ``Flair'' and the BFC grant GA CROCOCO}

\subjclass[2000]{Primary 11G25, 11G45, 14G50, 19F05, 94B27}

\date{\today}
\dedicatory{To our late teacher, colleague and friend\\ Gilles Lachaud}

\begin{abstract}
  We introduce in this article a new method to estimate the minimum
  distance of codes from algebraic surfaces. This lower bound is
  generic, i.e. can be applied to any surface, and turns out to be
  ``liftable'' under finite morphisms, paving the way toward the
  construction of good codes from towers of surfaces. In the same
  direction, we establish a criterion for a surface with a fixed
  finite set of closed points $\cP$ to have an infinite tower of
  $\ell$--étale covers in which $\cP$ splits totally.  We conclude by
  stating several open problems. In particular, we relate the
  existence of asymptotically good codes from general type surfaces
  with a very ample canonical class to the behaviour of their number
  of rational points with respect to their $K^2$ and coherent Euler
  characteristic.
\end{abstract}

\maketitle

\setcounter{tocdepth}{1}
\tableofcontents

\section*{Introduction}
In the early 80's, V.~D. Goppa proposed a construction of codes from
algebraic curves \citeMain{goppa}.  A pleasant feature of these codes from
curves is that they benefit from an elementary but rather sharp lower
bound for the minimum distance, the so--called {\em Goppa designed
  distance}. In addition, an easy lower bound for the dimension can be
derived from Riemann-Roch theorem.  The very simple nature of these
lower bounds for the parameters permits to formulate a concise
criterion for a sequence of curves to provide asymptotically good
codes~: roughly speaking, {\em the curves should have a large number
  of rational points compared to their genus}. Ihara
\citeMain{Ihara} and independently  Tsfasman,
Vl\u{a}du\c{t} and Zink \citeMain{TVZ} proved that this criterion was satisfied by some modular
and Shimura curves. This led to an impressive breaktrough in coding
theory~: the unexpected existence of sequences of codes beating the
asymptotic Gilbert-Varshamov bound.

As suggested by Manin and Vl\u{a}du\c{t} \citeMain{manin}, Goppa's
original construction extends to higher dimensional
varieties. However, in comparison to the rich literature involving
codes from curves, codes from higher dimensional varieties have been
subject to very few developments. One of the reasons for this gap is
that for varieties of dimension two and higher, divisors and points
are objects of different nature, and this difference makes the estimate of
the minimum distance much more difficult.  For this reason, most of
the works on codes from surfaces in the literature concern specific
subfamilies of surfaces, and estimate the parameters by ad hoc
techniques relying on specific arithmetic and geometric properties of
the involved surfaces. For instance, codes from quadric and Hermitian
surfaces are considered in \citeMain{fred, fred2, CouDuu}, toric surface
codes are studied in \citeMain{hansen} and codes from Hirzebruch surfaces
in \citeMain{nardi19}, codes from rational surfaces obtained by specific
blowups of the projective planes are analysed in \citeMain{Cou11, davis,
  ballico11, ballico13} and codes from abelian surfaces are
investigated in \citeMain{haloui17, ABHP19}. Toward an analog of {\em
  having a large number of points compared to their genus}, Voloch and
Zarzar suggested to seek surfaces with a low Picard number
\citeMain{zarzar, agctvoloch}, leading to further discoveries of good
codes \citeMain{ls18, blache19}. A significant part of the previously
described code constuctions appear in the excellent survey
\citeMain{Little_Survey}.

On the other hand, a few references propose ``generic'' lower bounds
for the minimum distance, i.e. bounds that could be applied to codes
from any surface. The first one is proposed by Aubry in
\citeMain{Aubry}, resting on a counting argument on the maximum
number of points of a curve in a very ample linear system. Next,
S.~H. Hansen proposed two other approaches to bound the minimum
distance from below \citeMain{soH}. The first one  involves an
auxiliary set of curves, while the second one  involves the Seshardi
constant at the evaluation points. Note that the latter approach is very close
to that of Bouganis \citeMain{bouganis}, while Bouganis' code construction
is not {\em stricto sensu} an algebraic geometry code from a surface.
Unfortunately, it should be emphasized that, even with the knowledge
of these bounds, no result is known on the possible asymptotic
performance of codes from algebraic surfaces, and the principle of
getting good sequences of codes from sequences of curves {\em whose
  number of rational points grows quickly compared to their genus} has
no known counterpart in the surfaces' setting.

It is worth noting that besides the highly difficult objective of
getting new families of codes with excellent asymptotic parameters,
the geometry of algebraic surfaces could provide codes with many other
interesting features. Indeed, in the last decade, new questions
brought by distributed storage problems arose and motivated a focus on
codes with {\em good local properties}. Concerning the use of
algebraic geometry codes for constructing locally recoverable codes,
one can cite for instance (and the list is very far from being
exhaustive) \citeMain{BT14, BTZ17, BHHMVA17, HMM18, LMX19, MTT19,
  SVV19}.  It should be pointed out that algebraic geometry codes from
surfaces are of deep interest from this point of view. Indeed, given a
code from a surface $X$, to access to some digit of the code, which
corresponds to some point of the surface, one can consider a curve
contained in the surface and containing this point and restrict to the
code on the curve. This point of view, which is in particular investigated in
\citeMain{SVV19}, provides another motivation to
study codes from surfaces or higher dimensional varieties.
Note finally that these interesting local properties of
codes on surfaces have been remarked more than 15 years ago by
Bouganis \citeMain{bouganis}.

\subsection*{Our contribution} This article expects to be a first step
toward the construction of asymptotically good families codes from
sequences of algebraic surfaces. For this sake, we provide several
tools and results that would be helpful to progress in this direction.

In \S~\ref{sec:codes_on_surfaces}, we propose a new ``generic'' lower
bound for the minimum distance of an algebraic geometry code from a
surface. This bound involves a linear system of divisor with a
property we called {\em $\cP$--interpolating} (see
Definition~\ref{def:P-interpolating}).  The existence and the
construction of $\cP$--interpolating divisors could appear as a
limitation to handle this result. However, we also prove that for a
surface $X$ over the finite field $\F_q$, the linear system associated
to $\O_X(q+1)$ is always $\cP$--covering.  Further we compare this new
bound with previous ones in the literature.  Finally, we discuss the
behaviour of these generic bounds, including ours, under finite
morphisms of projective surfaces and show that our bound such as those
involving Seshadri constants are easy to ``lift'' under such a finite
map, and hence could be used to study codes from towers of surfaces.

In \S~\ref{sec:philippe}, we propose a class field--like criterion for
a surface with a fixed set of rational points $\cP$ to have an
infinite tower of $\ell$--étale covers in which $\cP$ splits totally.
Next, in \S~\ref{sec:classification} we sieve the Kodaiara
classification of algebraic surfaces to determine which types of
surfaces may have infinite totally split towers of étale covers.
Despite we acknowledge that we have not been able to apply the
criterion established in \S~\ref{sec:philippe} in order to provide new
families of codes, we however show in \S~\ref{sec:example} that this
criterion can be applied on a product of two hyperelliptic curves.  

We conclude the article by presenting some open problems in
\S~\ref{sec:open_prob}. In particular, we show that the existence of
asymptotically good families of codes can be deduced from that of
families of general type surfaces with very ample canonical class $K$
whose number $N$ of rational points goes to infinity, whose $K^2$ and
coherent Euler characteristic $\chi(\O_X)$ are proportional to $N$
and whose asymptotic ratio $\chi (\O_X)/K_X$ is the largest possible.

\subsection*{Note} The published version of the present article
includes an appendix by Alexander Schmidt \citeMain{schmidt3}.

\section{Codes from surfaces}\label{sec:codes_on_surfaces}
\subsection{Context and notation}
\subsubsection{Context}
In what follows, $X$ denotes a smooth projective geometrically
connected surface over $\F_q$ and $\cP$ a non empty set of rational
points on $X$ of cardinality $n$.  The surface $\Xbar$ is defined as
$\Xbar := X \times_{\spec(\F_q)} \spec (\Fqbar)$.  In addition, $G$ is
a divisor on $X$ whose support avoids the elements of $\cP$.
Our point is to study the parameters of the linear
code $\AGCode{X}{\cP}{G}$ introduced in \citeMain{goppa, manin} defined as the
image of the map:
\[
\textrm{ev}_\cP : \map{\H^{0}({X},{G})}{\F_q^n}{f}{(f(P))_{P
    \in \cP}},
\]
where elements of $\H^{0}({X}, {G})$ are regarded as rational fractions on $X$.

\begin{rem}
  Actually, the condition ``the support of $G$ avoids the elements of
  $\cP$'' can be removed. In such a case, one needs to choose at each
  point $P$ a generator $s_P$ of the stalk of the sheaf $\O(G)$ at
  $P$. Then, any global section $f$ can be locally written $f_P s_P$
  for some $f_P \in \O_{X,P}$, and one can evaluate $f_P$ at $P$.
\end{rem}

\subsubsection{Codes}
Recall that a {\em code} is a vector subspace $C$ of $\F_q^n$ for some finite
field $\F_q$ and some positive integer $n$, called the {\em length} of $C$.
The {\em dimension} of the code is the dimension of $C$ as a $\F_q$-vector space.
The {\em Hamming weight} $\wt{\mathbf x}$ of a vector
$\mathbf x \in \F_q^n$ is the number of nonzero entries of $\mathbf x$,
and that the {\em minimum distance} of a code $C \subseteq \F_q^n$ is the
minimum weight of a nonzero vector of $C$.
The objective of this first section is to investigate various manners
to estimate the parameters of codes from algebraic surfaces, namely the
dimension and the minimum distance.

\subsubsection{Equivalence of divisors and cycles}
In the sequel, we frequently deal with linear and numerical
equivalence of divisors. One is denoted by $\sim$ and the
second one by $\equiv$. We refer to \citeMain[Chapters II.6 \&
V.1]{Hartshorne} for the definitions. Recall that linear equivalence
entails numerical equivalence. Further we also deal with rational
equivalence of cycles which is a generalisation of linear equivalence
of divisors and hence will also be denoted by $\sim$. We refer to
\citeMain{Fulton_intersection} for the definition of
rational equivalence.

\subsection{The dimension of codes from surfaces}
To bound from below the dimension of a code $\AGCode{X}{\cP}{G}$, the
natural tool is Riemann-Roch theorem which asserts that
\[
h^0(X, G) - h^1(X,G) +h^2 (X,G) = \frac 1 2 G \cdot (G-K_X) + \chi (\O_X),
\]
and hence
\[
h^0(X, G) + h^2 (X,G) \geq \frac 1 2 G \cdot (G-K_X) + \chi (\O_X).
\]
The following lemma is useful in what follows.  This criterion can be
found in \citeMain{Hartshorne} and has been previously suggested for
applications to codes on surfaces by Bouganis in \citeMain{bouganis}.

\begin{lem}\label{lem:dimension}
  Let $H$ be an ample divisor on $X$. If $G \cdot H  > K_X \cdot H $ then
    \[
    h^0(X,G) \geq \frac 1 2 G \cdot (G - K_X) + \chi (\O_X).
    \]
\end{lem}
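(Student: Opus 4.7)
The plan is a straightforward application of Serre duality combined with the standard fact that an effective divisor pairs non-negatively with an ample divisor. Starting from the Riemann-Roch equality
\[
h^0(X,G) - h^1(X,G) + h^2(X,G) = \tfrac{1}{2} G \cdot (G-K_X) + \chi(\O_X),
\]
the inequality I want follows at once from $h^1(X,G) \geq 0$ provided I can establish that $h^2(X,G) = 0$.

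To prove this vanishing, I would invoke Serre duality on the smooth projective surface $X$, which gives
\[
h^2(X,G) = h^0(X, K_X - G).
\]
The task thus reduces to showing that $K_X - G$ has no non-zero global sections, equivalently that $K_X - G$ is not linearly equivalent to an effective divisor. Suppose for contradiction that $K_X - G \sim D$ for some effective divisor $D$. Since $H$ is ample, its intersection with any effective divisor is non-negative (this is an immediate consequence of the Nakai-Moishezon criterion, or simply of the fact that after replacing $H$ by a suitable multiple, $H$ can be taken to be a very ample divisor meeting $D$ properly in a non-negative number of points). Hence $(K_X - G) \cdot H = D \cdot H \geq 0$, which contradicts the hypothesis $G \cdot H > K_X \cdot H$, i.e. $(K_X - G) \cdot H < 0$.

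Consequently $h^0(X, K_X - G) = 0$, so $h^2(X, G) = 0$, and Riemann-Roch together with $h^1(X,G) \geq 0$ yields the desired lower bound. There is no real obstacle here; the only subtle point worth stating carefully is the justification that an effective divisor meets an ample divisor non-negatively, which is why I would appeal explicitly to Nakai-Moishezon or to the very ampleness of a sufficiently large multiple of $H$.
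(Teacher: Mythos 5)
Your proof is correct and takes essentially the same route as the paper: the paper's one-line proof simply invokes Hartshorne's Lemma V.1.7 to get $h^2(X,G)=0$ from $G\cdot H > K_X\cdot H$ and then applies Riemann--Roch, and your Serre-duality argument (an effective representative of $K_X-G$ would pair non-negatively with the ample $H$, contradicting $(K_X-G)\cdot H<0$) is exactly the proof of that cited lemma.
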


\begin{proof}
  From \citeMain[Lemma V.1.7]{Hartshorne}, if $G \cdot H  > K_X \cdot H $ then
  $h^2(X,G) = 0$ which yields the proof.
\end{proof}

\subsection{A new lower bound for the minimum distance}
In this subsection, we present a general manner to bound from below
the minimum distance of a code from a surface.  This bound can be
regarded as the counterpart for codes from surfaces of Goppa's
designed distance for codes from curves.  Indeed, in the case of
curves, an algebraic geometry code of length $n$ associated to a
divisor $G$ has minimum distance bounded from below by $n - \deg
G$. In the sequel, we prove that on a surface, a code of length $n$
associated to a divisor $G$ has minimum distance bounded from below by
$n - \Gamma \cdot G$ for some divisor $\Gamma$ with a special property. Here, such as in many
situations in algebraic geometry, the degree for divisors on a curve
is replaced by the intersection product for divisors on surfaces.  For
this sake, the introduction of an auxiliary divisor $\Gamma$ is
necessary. Our work to follow consists in providing de relevant
definition for $\Gamma$. Therefore, the statement of this new bound
requires first to recall some basic notions on linear systems of
divisors.

\subsubsection{Linear systems on a surface}\label{ss:lin_syst}
Recall that a {\em linear system} of divisors $\Gamma$ on $\Xbar$ is a
family of positive divisors which are all linearly equivalent, and
which is parametrised by a projective space. Given a divisor $D$ on
$\Xbar$, the {\em complete linear system} denoted as $|D|$ is the set
of all positive divisors on $\Xbar$ which are linearly equivalent to
$D$. This set is parametrised by $\P (\H^{0}({\Xbar},{D}))$ and a
general linear system is a subset of some complete linear system $|D|$
paramatrised by some proper linear subspace of
$\P (\H^{0}({\Xbar}, {D}))$.

\subsubsection{$\cP$--interpolating linear systems}

\begin{nota}
  Let $\Gamma$ be a linear system of curves on $\Xbar$ and let
  $\Ybar$ be a proper closed sub-scheme of $\Xbar$. Then, we denote
  by $\Gamma - \Ybar$ the maximal linear subsystem of $\Gamma$ of elements
  whose base locus contains $\Ybar$.
\end{nota}

\begin{defn}\label{def:P-interpolating}
  Given a surface $X$ and a set of rational points $\cP$, a linear system
  $\Gamma$ of divisors on $\Xbar$ is said to be {\em $\cP$--interpolating} if
  \begin{enumerate}[(i)]
  \item\label{item:non_empty} $\Gamma - \cP$ is non empty;
  \item\label{item:zero_dim} the base locus of $\Gamma - \cP$
    has dimension $0$.
  \end{enumerate}
\end{defn}

\begin{rem}\label{rem:base_locus}
  About condition~(\ref{item:zero_dim}), obviously, the base locus of
  $\Gamma - \cP$ contains $\cP$. In addition it may contain a finite
  number of other points. In particular, this base locus cannot be empty.
\end{rem}

\begin{rem}
  Note that the notion of $\cP$--intepolating system such as lower
  bound for the minimum distance we get in Theorem~\ref{thm:bound} to
  follow are of {\bf geometric} and not arithmetic nature: the
  $\cP$-interpolating linear system $\Gamma$ is \textbf{not} assumed
  to be defined over $\F_q$.
\end{rem}

The following statement provides an equivalent definition of
$\cP$--interpolating systems that will be useful in the sequel.
\begin{prop}\label{prop:P-interpolating_equivalent}
  Conditions~(\ref{item:non_empty})
  and~(\ref{item:zero_dim}) of Definition~\ref{def:P-interpolating}
  hold if and only if the following condition holds
  \begin{enumerate}[(i')]
  \item\label{item:ii'} There exist two sections
    $\overline A, \overline B$ of $\Gamma - \cP$ such that the
    supports of $\overline A$ and $\overline B$ have no common
    component.
  \end{enumerate}
\end{prop}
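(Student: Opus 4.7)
The plan is to show the two implications separately, with the reverse direction being essentially immediate and the forward direction requiring a standard ``avoid finitely many hyperplanes'' argument over the algebraically closed ground field $\Fqbar$.

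\textbf{From (i'{}) to (i) and (ii).} The existence of the section $\overline{A}$ already shows $\Gamma - \cP$ is non-empty, so (i) holds. For (ii), I observe that the base locus of $\Gamma - \cP$ is contained in the scheme-theoretic intersection $\supp(\overline{A}) \cap \supp(\overline{B})$. Since by assumption $\overline{A}$ and $\overline{B}$ share no irreducible component, this intersection has dimension at most $0$; combined with Remark~\ref{rem:base_locus} (which ensures it contains $\cP$ and so is non-empty), the base locus is exactly $0$-dimensional.

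\textbf{From (i) and (ii) to (i'{}).} By (i), pick any $\overline{A} \in \Gamma - \cP$ and write $\supp(\overline{A}) = C_1 \cup \cdots \cup C_r$ for its irreducible components. For each $i$, let
\[
V_i := \{\, \overline{D} \in \Gamma - \cP \ : \ C_i \subseteq \supp(\overline{D}) \,\}.
\]
Inside the projective space parametrising $\Gamma - \cP$, the set $V_i$ is a linear subsystem, because the condition ``$s$ vanishes along $C_i$'' is linear on sections (it cuts out the subspace $\H^0(\Xbar, \O(D - C_i))$ inside $\H^0(\Xbar, \O(D))$, intersected with the vector space defining $\Gamma - \cP$). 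Condition (ii) guarantees that $C_i$, being a curve, is not contained in the base locus of $\Gamma - \cP$; hence there exists some element of $\Gamma - \cP$ whose support does not contain $C_i$, which means $V_i$ is a \emph{proper} linear subspace. Since $\Gamma - \cP$ is a projective space over the infinite field $\Fqbar$, the finite union $V_1 \cup \cdots \cup V_r$ does not exhaust it, and any $\overline{B}$ lying outside this union has support sharing no component with $\overline{A}$.

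\textbf{Main obstacle.} The only non-cosmetic point is verifying that each $V_i$ really is a proper linear subspace; the linearity comes from interpreting ``multiplicity along $C_i$ at least $1$'' as the vanishing of a linear functional on sections, and the properness is exactly the translation of the $0$-dimensionality of the base locus. Once this is in place, the rest is formal, and no separate argument is needed for (i) in the forward direction since a projective space is by definition non-empty.
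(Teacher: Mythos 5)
Your proof is correct and follows essentially the same route as the paper's: the forward direction reads off $0$-dimensionality from the component-free intersection of the two supports, and the converse avoids the finitely many proper linear subsystems $\Gamma - \cP - C_i$ inside the projective space parametrising $\Gamma - \cP$, using that $\Fqbar$ is infinite. Your explicit justification that each $V_i$ is a \emph{linear} subspace (via $\H^0(\Xbar, \O(D - C_i))$) is a detail the paper absorbs into its notation for $\Gamma - \Ybar$, but the argument is the same.
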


\begin{proof}
  Suppose (\ref{item:ii'}') holds. Then, the base locus of
  $\Gamma - \cP$ is contained in the intersection of the supports of
  $\overline A$ and $\overline B$. From Remark~\ref{rem:base_locus},
  this intersection contains $\cP$, then it is non empty and by
  (\ref{item:ii'}') has dimension $0$.

  Conversely, suppose that (\ref{item:non_empty}) and
  (\ref{item:zero_dim}) hold. From (\ref{item:non_empty}),
  there exists at least one section
  $\overline A$ of $\Gamma - \cP$. Let
  $\overline A_1, \dots, \overline A_s$ be the geometrically
  irreducible components of its support. Then, by
  (\ref{item:zero_dim}), for any $i \in \{1, \dots, s\}$, the linear
  system $\Gamma - \cP - \overline A_i$ is strictly contained in
  $\Gamma - \cP$. Consequently, consider the projective space $\P^r$
  parametrizing $\Gamma - \cP$, then the sublinear systems
  $\Gamma - \cP - \overline A_i$ correspond to finitely many of proper
  linear subvarieties of $\P^r(\Fqbar)$ and since $\Fqbar$ is
  infinite, there exists a point in $\P^r(\Fqbar)$ avoiding them all.
  Therefore, there exists a section $\overline B$ of $\Gamma - \cP$
  whose support has no common component with that of $\overline A$.
\end{proof}

\begin{rem}
  If $\Gamma$ is a sub--linear system of a linear system $\Delta$,
  then, $\Delta$ is $\cP$--interpolating too. On the other hand, for any
  $\cP' \subseteq \cP$, any $\cP$--interpolating linear system is
  $\cP'$--interpolating.
\end{rem}

The following statement is useful in what follows.

\begin{prop}\label{prop:minor_Gamma2}
  Let $\Gamma$ be a $\cP$--interpolating system, then
  \[
  \Gamma^2 \geq |\cP|.
  \]
\end{prop}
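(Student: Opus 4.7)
The plan is to use Proposition~\ref{prop:P-interpolating_equivalent} to reduce the self-intersection $\Gamma^2$ to an intersection of two concrete effective divisors which have no common component, and then to exploit that both of them pass through every point of $\cP$.

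First, I would apply Proposition~\ref{prop:P-interpolating_equivalent} to obtain two sections $\overline{A}, \overline{B}$ of $\Gamma - \cP$ whose supports share no common irreducible component. Since $\overline{A}$ and $\overline{B}$ both belong to the linear system $\Gamma$, they are linearly equivalent, and hence numerically equivalent. Consequently the intersection number $\overline{A}\cdot \overline{B}$ equals $\Gamma^2$ (this is the usual definition of the self-intersection of a linear system as the intersection of two general members).

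Next, because $\overline{A}$ and $\overline{B}$ have no common component, the intersection cycle $\overline{A}\cdot \overline{B}$ is a well-defined effective $0$-cycle, computed as a sum of local intersection multiplicities over the finitely many points where $\overline{A}$ and $\overline{B}$ meet. Explicitly,
\[
\Gamma^2 \;=\; \overline{A}\cdot \overline{B} \;=\; \sum_{Q \in \supp(\overline A) \cap \supp(\overline B)} i_Q(\overline A, \overline B),
\]
where each local intersection number $i_Q(\overline A, \overline B)$ is a positive integer.

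Finally, since both $\overline{A}$ and $\overline{B}$ lie in $\Gamma - \cP$, every point $P \in \cP$ belongs to the base locus of $\Gamma - \cP$ and hence to $\supp(\overline A)\cap \supp(\overline B)$, contributing at least $1$ to the above sum. Restricting the sum to the subset $\cP$ therefore yields
\[
\Gamma^2 \;\geq\; \sum_{P \in \cP} i_P(\overline A, \overline B) \;\geq\; |\cP|,
\]
which is the desired inequality. The only delicate point is justifying that $\overline{A} \cdot \overline{B} = \Gamma^2$ as an effective $0$-cycle, which is precisely what the ``no common component'' conclusion of Proposition~\ref{prop:P-interpolating_equivalent} provides, so the proof is essentially immediate once that proposition is invoked.
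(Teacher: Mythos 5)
Your proof is correct and follows the same route as the paper: invoke Proposition~\ref{prop:P-interpolating_equivalent} to get $\overline A, \overline B \in \Gamma - \cP$ with no common component, identify $\overline A \cdot \overline B$ with $\Gamma^2$, and bound the effective intersection $0$-cycle from below by the points of $\cP$ that both divisors must contain. You merely spell out the local intersection multiplicities explicitly, which the paper leaves implicit.
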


\begin{proof}
  From Proposition~\ref{prop:P-interpolating_equivalent}, there exist
  $\overline A, \overline B \in \Gamma - \cP$ with no common
  irreducible component.  The points
  of $\cP$ lying at the intersection of $\overline A$ and $\overline B$, we have thus
  $\Gamma^2 = \overline A \cdot \overline B \geq |\cP|.$
\end{proof}

\subsubsection{A lower bound for the minimum distance}

\begin{thm}\label{thm:bound}
Let $X$ be a smooth geometrically connected surface over $\F_q$ with
a set of rational points $\cP$ and
$G$ be a divisor on $X$ whose support avoids $\cP$.
Let $\Gamma$ be a $\cP$--interpolating linear system on $X$.
Then the minimum distance $d$ of $C_L (X, \cP, G)$ satisfies
\[
d\geq n-\Gamma \cdot G,
\]
where $n= |\cP|$.
\end{thm}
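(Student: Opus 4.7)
The plan is to reduce the theorem to the following claim: for any nonzero $f \in \H^0(X,G)$ whose evaluation $\textrm{ev}_\cP(f)$ is nonzero, the set of zeros $\cP_0 := \{P \in \cP : f(P) = 0\}$ satisfies $|\cP_0| \leq \Gamma \cdot G$. Since the Hamming weight of $\textrm{ev}_\cP(f)$ equals $n - |\cP_0|$, this claim immediately yields $d \geq n - \Gamma \cdot G$. (If one has a nonzero $f$ with $\textrm{ev}_\cP(f) = 0$, then $\cP_0 = \cP$ and the same argument to follow would force $\Gamma \cdot G \geq n$, in which case the bound is trivial.)

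Given such an $f$, I form the effective divisor $D_0 := \textrm{div}(f) + G$, linearly equivalent to $G$. Because $\supp(G) \cap \cP = \emptyset$, the zero locus $\cP_0$ of $f$ on $\cP$ is contained in $\supp(D_0)$. The heart of the argument is to produce a divisor $\Cbar$ in the linear system $\Gamma$ over $\Fqbar$ with the following two properties: (a) $\cP_0 \subseteq \supp(\Cbar)$, and (b) $\Cbar$ shares no irreducible component with $D_0$ over $\Fqbar$. Granted such a $\Cbar$, the intersection of $\Cbar$ with $D_0$ is proper, so
\[
\Cbar \cdot D_0 \geq |\cP_0|,
\]
each point of $\cP_0$ contributing at least $1$ to the intersection number. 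Combined with $D_0 \sim G$ and the fact that all members of $\Gamma$ are numerically equivalent, this gives $\Cbar \cdot D_0 = \Cbar \cdot G = \Gamma \cdot G$, hence $|\cP_0| \leq \Gamma \cdot G$.

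To construct $\Cbar$, I work inside the sub-linear-system $\Gamma - \cP$, which is nonempty by Definition~\ref{def:P-interpolating}(\ref{item:non_empty}); every member of $\Gamma - \cP$ contains $\cP \supseteq \cP_0$ in its support, securing (a). For (b), let $E_1, \dots, E_s$ be the geometrically irreducible components of $\supp(D_0)$ over $\Fqbar$: for each $i$, the subset $\Sigma_{E_i}$ of members of $\Gamma - \cP$ that contain $E_i$ is a linear subspace of the projective space parametrizing $\Gamma - \cP$ (vanishing along $E_i$ is a linear condition on sections), and is \emph{proper}, for otherwise $E_i$ would lie in the base locus of $\Gamma - \cP$, contradicting Definition~\ref{def:P-interpolating}(\ref{item:zero_dim}). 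Finitely many proper linear subspaces cannot cover a projective space over the infinite field $\Fqbar$, so a divisor $\Cbar$ in the complement of $\bigcup_i \Sigma_{E_i}$ exists and satisfies both (a) and (b). The main conceptual point, and the only real obstacle, is to recognize that the two clauses of the $\cP$-interpolating definition are tailored precisely for these two requirements: clause (\ref{item:non_empty}) secures (a) and clause (\ref{item:zero_dim}) secures (b). Once $\Cbar$ is in hand, the remainder of the argument is a standard application of proper intersection on a smooth surface.
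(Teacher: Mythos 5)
Your proof is correct and follows essentially the same route as the paper's: the same divisor $D=(f)+G$, the same use of $\Gamma-\cP$ to guarantee passage through the relevant points, and the same argument that the members of $\Gamma-\cP$ containing a fixed component of $\supp(D)$ form a proper linear subvariety, so that finitely many of them cannot cover the parametrizing projective space over the infinite field $\Fqbar$. The concluding intersection-number estimate $|\cP_0|\leq \Cbar\cdot D_0 = \Gamma\cdot G$ is also exactly the paper's.
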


\begin{proof}
  Let $f$ be an element of $\H^{0}({X},{G})$ providing a nonzero
  codeword $\mathbf{c}$.
  Let $D$ be the positive divisor $D = (f) + G$. Then,
  \[  
  n- \wt{\textrm{ev}(f)}\leq |\suppt{D} \cap \cP|,
  \]
  where $\suppt{D}$ denotes the support of the divisor $D$.
  
  We claim that there exists $E\in \Gamma - \cP$
  whose support has no common irreducible component with that of $D$.
  Indeed, by Definition~\ref{def:P-interpolating}(\ref{item:non_empty}),
  $\Gamma - \cP$ is nonempty. Therefore it is parametrised by some
  projective space $\P^{\ell}$ for $\ell \geq 0$. Next, by
  Definition~\ref{def:P-interpolating}(\ref{item:zero_dim}), for any
  $\Fqbar$--irreducible component $\Ybar$ of the support of $D$,
  the linear system $\Gamma - (\cP \cup \Ybar)$ is distinct from
  $\Gamma - \cP$. Thus, there is a proper linear subvariety
  $V_{\Ybar}$ of $\P^{\ell}(\Fqbar)$ parametrising $\Gamma - (\cP \cup \Ybar)$.
  Let $\overline{W} \subseteq \P^{\ell}(\Fqbar)$ be defined as
  \[
  \overline{W} :=  \bigcup_{\Ybar} V_{\Ybar},
  \]
  where $\Ybar$ runs over all the $\Fqbar$--irreducible components of
  the support of $D$. Then, any element of
  $\P^\ell (\Fqbar) \setminus \overline{W}$ provides such a divisor
  $E$.  Consequently,
  \[
  | \suppt{D} \cap \cP| \leq |\suppt{D}\cap
  \suppt E| \leq D \cdot E=G \cdot \Gamma.
  \]
\end{proof}

Several examples of applications of this lower bound are
given further in \S~\ref{ss:universal} and \ref{ss:further_examples}.
Before, let us state a brief summary of our estimates.

\subsection{Summary on the parameters of codes
  for a surface}

Our previous results lead to the following statement.

\begin{thm}\label{thm:dim}
  Let $\Gamma$ be a $\cP$--interpolating linear system such that $n > \Gamma \cdot
  G$, and assume moreover that there exists an ample divisor $H$ on $X$
  such that $G\cdot H > K_X \cdot H$.
  Then the code $\AGCode{X}{\cP}{G}$ has parameters
  \[
  k \geq \frac 1 2 G \cdot (G-K_X) + \chi (\O_X) \qquad
  {\rm and} \qquad d \geq n - \Gamma \cdot G.
  \]
\end{thm}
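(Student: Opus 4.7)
The plan is to combine the two previous bounds by observing that the minimum distance estimate forces the evaluation map to be injective, so the dimension of the code agrees with $h^0(X,G)$, which is then controlled by the Riemann--Roch argument.

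First, I would apply Theorem~\ref{thm:bound} directly: since $\Gamma$ is $\cP$-interpolating, the minimum distance of $\AGCode{X}{\cP}{G}$ satisfies $d \geq n - \Gamma \cdot G$, which gives the second parameter estimate. The essential point is now that the hypothesis $n > \Gamma \cdot G$ ensures $d \geq 1$, so that no nonzero global section of $\O_X(G)$ lies in the kernel of $\textrm{ev}_\cP$. In other words, the evaluation map is injective and
\[
k \;=\; \dim_{\F_q} \textrm{ev}_\cP\bigl(\H^{0}(X,G)\bigr) \;=\; h^0(X,G).
\]

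Next, I would invoke Lemma~\ref{lem:dimension} with the ample divisor $H$ provided by the hypothesis: since $G \cdot H > K_X \cdot H$, Serre duality (applied via \citeMain[Lemma V.1.7]{Hartshorne} as in the proof of that lemma) yields $h^2(X,G)=0$, hence Riemann--Roch gives
\[
h^0(X,G) \;\geq\; \tfrac{1}{2}\, G\cdot(G-K_X) + \chi(\O_X).
\]
Combined with the equality $k = h^0(X,G)$ from the previous step, this produces the claimed lower bound on the dimension.

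There is essentially no hard step here: the theorem is a packaging of Lemma~\ref{lem:dimension} and Theorem~\ref{thm:bound}, the only nontrivial observation being that $n > \Gamma \cdot G$ is precisely what is needed to upgrade the inequality $k \leq h^0(X,G)$ (which always holds for the image of an evaluation map) into an equality, by ruling out a nontrivial kernel through the positivity of the minimum distance.
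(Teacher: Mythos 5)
Your proof is correct and follows exactly the paper's (very brief) argument: the distance bound comes from Theorem~\ref{thm:bound}, the hypothesis $n > \Gamma\cdot G$ forces the evaluation map to be injective (since the weight estimate in the proof of Theorem~\ref{thm:bound} applies to every nonzero section, not just those with nonzero image), and the dimension bound then follows from Lemma~\ref{lem:dimension}. No further comment is needed.
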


\begin{proof}
  The condition $n > \Gamma \cdot G$ entails that the evaluation map
  is injective and hence asserts that the minimum distance of the code
  is that of the Riemann-Roch space.
\end{proof}

\subsection{A ``universal'' example of application of our
  bound}\label{ss:universal}
A natural question is : {\em how to find a $\cP$--interpolating system?}
The following lemma provides an example that can be applied to any surface
with a very ample divisor.

\begin{thm}\label{thm:universal}
  Let $\L$ be a very ample sheaf on $X$, then for any $\cP$ contained
  in $X(\F_q)$, the sheaf $\L^{\otimes (q+1)}$ is $\cP$--interpolating and, in
  the conditions of Theorem~\ref{thm:bound}, we get
\[
d\geq  n-(q+1)G \cdot \L.
\]
If in addition there exists $L \in |\L|$
such that $\cP$ is contained in the affine chart  $X \setminus \suppt{L}$, then
$\L^{\otimes q}$ is $\cP$--interpolating and
\[
d\geq n-qG \cdot \L.
\]
\end{thm}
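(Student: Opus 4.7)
The plan is to exhibit, for each case, an explicit sub--linear system of $|\L^{\otimes(q+1)}|$ (respectively $|\L^{\otimes q}|$) that is $\cP$--interpolating. Once this is done, since any linear system containing a $\cP$--interpolating sub--system is itself $\cP$--interpolating (immediate from Definition~\ref{def:P-interpolating}, as enlarging the linear system can only shrink the base locus), the full linear system $|\L^{\otimes(q+1)}|$ (resp.~$|\L^{\otimes q}|$) is $\cP$--interpolating, and the bounds on $d$ follow from Theorem~\ref{thm:bound}, using that a member of $|\L^{\otimes k}|$ intersects $G$ in $k\, G\cdot \L$.

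For the first statement, fix the embedding $\phi\colon X \hookrightarrow \P^N$ defined by $|\L|$, and consider the degree-$(q+1)$ forms
\[
F_{ij}(x) := x_i x_j^q - x_i^q x_j, \qquad 0 \leq i < j \leq N.
\]
They vanish on every $\F_q$--rational point of $\P^N$. Conversely, if $P = [a_0 : \cdots : a_N] \in \P^N(\Fqbar)$ satisfies all relations $a_i a_j^q = a_i^q a_j$, then choosing $k$ with $a_k \neq 0$, normalising $a_k = 1$, and taking $i = k$ yields $a_j^q = a_j$ for every $j$, so $P \in \P^N(\F_q)$. Hence the common zero locus of the $F_{ij}$ on $\P^N_{\Fqbar}$ is exactly $\P^N(\F_q)$, a finite set. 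Restricting to $\Xbar$, the common zero locus of the pullbacks $\phi^* F_{ij}$ is contained in $X(\F_q)$, hence $0$--dimensional. The non--empty sub--linear system $\Gamma_0 \subseteq |\L^{\otimes(q+1)}|$ they generate thus has $0$--dimensional base locus; since $\cP \subseteq X(\F_q)$ lies in this base locus, $\Gamma_0 - \cP = \Gamma_0$, and by Definition~\ref{def:P-interpolating}, $\Gamma_0$ is $\cP$--interpolating.

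For the second statement, choose homogeneous coordinates on $\P^N$ so that $L$ corresponds to the hyperplane $\{x_0 = 0\}$; the hypothesis $\cP \subseteq X \setminus \suppt{L}$ means $\cP$ lies in the affine chart $\{x_0 \neq 0\}$. Consider the degree-$q$ forms
\[
H_j(x) := x_j^q - x_0^{q-1} x_j, \qquad 1 \leq j \leq N.
\]
In the affine coordinates $y_j := x_j/x_0$, these read $y_j^q - y_j$, which vanish at every $\F_q$--point of the chart. On $\{x_0 = 0\}$, the simultaneous vanishing of all $H_j$ forces $x_j = 0$ for every $j \geq 1$, which is impossible in $\P^N$. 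Therefore the common zero locus of the $H_j$ on $\P^N_{\Fqbar}$ equals $\P^N(\F_q) \setminus \{x_0 = 0\}$, a finite set containing the image of $\cP$. The same argument as before shows that the sub--linear system of $|\L^{\otimes q}|$ generated by the $\phi^* H_j$ is $\cP$--interpolating, hence so is $|\L^{\otimes q}|$.

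The only genuine point to check is that the base locus on $\Xbar$ (and not just on $\P^N_{\Fqbar}$) is $0$--dimensional, but this is automatic since the base locus on $\Xbar$ is cut out by (the pullbacks of) the ambient forms, so is contained in the ambient common zero locus, which we showed is already $0$--dimensional. The asserted inequalities on $d$ then follow directly from Theorem~\ref{thm:bound}.
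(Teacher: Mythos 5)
Your proof is correct and follows essentially the same route as the paper: it exhibits the sections $x_i^qx_j-x_ix_j^q$ of $\L^{\otimes(q+1)}$ (resp.\ the homogenised $x_j^q-x_0^{q-1}x_j$ of $\L^{\otimes q}$) whose common zero locus in $\P^N$ is the finite set of rational points, and concludes via the monotonicity of the $\cP$--interpolating property under enlargement of the linear system. The only difference is cosmetic: you prove inline what the paper isolates as Lemmas~\ref{lem:arrangement_of_affine_hyperplanes} and~\ref{lem:arrangement_of_hyperplanes}, and you spell out the behaviour at the hyperplane at infinity in the second case, which the paper leaves implicit.
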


\begin{rem}
In particular, if $X$ is embedded in $\P^\ell$, then
$\O_X(q+1)$ is $\cP$--interpolating.  
\end{rem}

The proof of Theorem~\ref{thm:universal} rests on the two following
well--known statements.

\begin{lem}
  \label{lem:arrangement_of_affine_hyperplanes}
  Let $x_1, \ldots, x_\ell$ be a system of coordinates in the affine space
  $\mathbf A^\ell$, then the intersection of the hypersurfaces of equation
  $x_i^q-x_i$ for $i = 1, \ldots, \ell$ equals $\mathbf A^\ell (\F_q)$.
\end{lem}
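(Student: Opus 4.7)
The plan is to verify the equality pointwise over $\Fqbar$, using the basic characterization of $\F_q$ as the set of elements of $\Fqbar$ fixed by the $q$-th power Frobenius. Since the statement is purely a set-theoretic equality of closed subvarieties of $\mathbf A^\ell$, no deeper geometric input is needed.

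Concretely, I would pick an arbitrary geometric point $P = (a_1, \ldots, a_\ell) \in \mathbf A^\ell(\Fqbar)$ and observe that $P$ lies on the hypersurface $V(x_i^q - x_i)$ if and only if its $i$-th coordinate satisfies $a_i^q = a_i$. Taking the intersection over $i = 1, \ldots, \ell$ imposes this condition on every coordinate, so $P$ lies in $\bigcap_{i=1}^\ell V(x_i^q - x_i)$ precisely when every $a_i$ belongs to $\F_q$, which by definition means $P \in \mathbf A^\ell(\F_q)$.

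The only algebraic input is the factorization $x^q - x = \prod_{a \in \F_q}(x - a)$ in $\F_q[x]$, equivalent to the fact that $\F_q$ is exactly the set of roots of $x^q - x$ in $\Fqbar$; this follows immediately from Fermat's little theorem together with the vanishing at $0$. There is consequently no real obstacle in the argument. If one wishes a scheme-theoretic refinement, it also comes for free: the $q^\ell$ monomials $x_1^{i_1}\cdots x_\ell^{i_\ell}$ with $0 \le i_j < q$ form an $\F_q$-basis of $\F_q[x_1,\ldots,x_\ell]/(x_1^q - x_1, \ldots, x_\ell^q - x_\ell)$, so the quotient has $\F_q$-dimension $q^\ell = |\mathbf A^\ell(\F_q)|$, forcing the defining ideal to be radical and the intersection to be reduced.
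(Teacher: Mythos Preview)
Your proof is correct; in fact the paper does not give any argument for this lemma, merely citing it as one of two ``well--known statements'' on which the proof of Theorem~\ref{thm:universal} rests. Your coordinate-wise verification via the characterization of $\F_q$ as the root set of $x^q-x$ is exactly the intended elementary justification, and the additional remark on reducedness is a harmless bonus.
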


\begin{lem}\label{lem:arrangement_of_hyperplanes}
 Let $\ell$ be a positive integer and $x_0, \ldots, x_\ell$ be a system
 of homogeneous coordinates in $\P^\ell_{\F_q}$.
 For any pair $(i,j)$ with $0 \leq i < j \leq \ell$, let $\mathcal H_{ij}$
 be the hypersurface of equation $x_i^qx_j-x_ix_j^q$.
 Then, the variety
 $ \bigcap_{0 \leq i < j \leq \ell} \mathcal H_{ij}
 $
 has dimension $0$ and is the union of the rational points of $\P^\ell$.
\end{lem}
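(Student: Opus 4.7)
My plan is to prove the set equality $\bigcap_{0 \leq i < j \leq \ell} \mathcal H_{ij} = \P^\ell(\F_q)$ by double inclusion, and then deduce the dimension statement immediately from the finiteness of the right-hand side. The easy inclusion ``$\supseteq$'' follows from the Frobenius identity $a^q = a$ for $a \in \F_q$: for any nonzero $(a_0, \ldots, a_\ell) \in \F_q^{\ell+1}$ and any pair $(i,j)$, one has $a_i^q a_j - a_i a_j^q = a_i a_j - a_i a_j = 0$, so the corresponding projective point lies in every $\mathcal H_{ij}$.

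For the reverse inclusion I would argue chart by chart. Cover $\P^\ell_{\F_q}$ by the standard affine opens $U_k = \{x_k \neq 0\} \cong \mathbf A^\ell$, using the ratios $x_i/x_k$ for $i \neq k$ as affine coordinates. In $U_0$, after setting $x_0 = 1$, the defining equation of $\mathcal H_{0i}$ becomes $x_i - x_i^q = 0$. Applying Lemma~\ref{lem:arrangement_of_affine_hyperplanes} to these $\ell$ equations identifies their common zero locus in $U_0 \cong \mathbf A^\ell$ with $\mathbf A^\ell(\F_q) = U_0(\F_q)$; in particular $\bigcap_{i<j} \mathcal H_{ij} \cap U_0 \subseteq U_0(\F_q)$. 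The indices $0, \ldots, \ell$ play symmetric roles in the defining equations, so the same argument applies verbatim to each chart $U_k$, using the hypersurfaces $\mathcal H_{ki}$ (with $i \neq k$) among the full family.

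Since $\P^\ell = \bigcup_{k=0}^\ell U_k$, assembling the chart-by-chart inclusions gives $\bigcap_{i<j} \mathcal H_{ij} \subseteq \bigcup_k U_k(\F_q) = \P^\ell(\F_q)$, hence equality. The set $\P^\ell(\F_q)$ being finite of cardinality $(q^{\ell+1}-1)/(q-1)$, the variety therefore has dimension $0$. The argument is elementary and I do not expect a genuine obstacle; the only small point worth double-checking is that the equations $\mathcal H_{ki}$ used in each chart really appear in the full intersection, which is clear after reindexing the unordered pair $\{k, i\}$ as $(i', j')$ with $i' < j'$.
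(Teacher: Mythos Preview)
The paper does not actually prove this lemma: it introduces both Lemma~\ref{lem:arrangement_of_affine_hyperplanes} and Lemma~\ref{lem:arrangement_of_hyperplanes} as ``well--known statements'' and moves directly to the proof of Theorem~\ref{thm:universal}. Your argument is correct and is exactly the natural one --- reducing to the affine case chart by chart via Lemma~\ref{lem:arrangement_of_affine_hyperplanes} --- so there is nothing to compare against and nothing to fix.
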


\begin{proof}[Proof of Theorem~\ref{thm:universal}]
Let $\phi : X \hookrightarrow \P^\ell$ with $\ell = h^0(X, \L)-1$
be the embedding associated to $\L$.
Hence $\L$ is isomorphic to $\phi^* \O_{\P^n}(1)$ and we denote
by $x_0, \ldots, x_\ell$ a
system of homogeneous coordinates of $\P^\ell$. 

The reduced base locus of $|\L^{\otimes (q+1)}| - \cP$ is contained in
the intersection of the zero loci of the sections
$(x_i^q - x_ix_j^{q-1})x_j$ of $\L^{\otimes (q+1)}$ for
$0 \leq i < j \leq \ell$.  From
Lemma~\ref{lem:arrangement_of_hyperplanes}, this zero locus equals
$X(\F_q)$. Therefore, the sheaf
$\L^{\otimes (q+1)}$ is $\cP$--interpolating.

For the second situation, we can apply
Lemma~\ref{lem:arrangement_of_affine_hyperplanes} to get the result.
\end{proof}

\subsection{Further examples}\label{ss:further_examples}

In \S~\ref{ss:universal} we proposed a generic linear system that is
$\cP$--interpolating for any set $\cP$ of rational points.  When we have
further information on the geometry of the surface it is possible to
construct more specific $\cP$--interpolating linear systems as suggested in
the examples to follow.

\subsubsection{Product of curves}
Let $X = C \times D$ be a product of curves and denote by
$\pi_C, \pi_D$ the corresponding projections.  Assume moreover that
$\cP$ is a ``grid'' of rational points, i.e. there is a set $\cP_C$ of
rational points of $C$ and a set $\cP_D$ of rational points of $D$
such that $\cP = \cP_C \times \cP_D$.  Note that $\cP_C$, $\cP_D$ can
be regarded as reduced positive divisors respectively of $C$ and
$D$. Assume moreover that these divisors are base point free. For
instance, assume that $\cP_C, \cP_D$ have respective degrees
$t_C, t_D$ larger than or equal to $2g_C$ and $2g_D$ where $g_C, g_D$
denote the respective genera of $C$ and $D$.  Then, the linear system
\[
\Gamma := |\pi_C^* (\cP_C) + \pi_D^* (\cP_D)|
\]
is $\cP$--interpolating. Indeed, by hypothesis, $\cP_C$ is base point free
and hence, there exists a positive divisor $E_C$ on $C$ (resp. $E_D$
on $D$) which is linearly equivalent to $\cP_C$ (resp $\cP_D$) and
with disjoint support.  Hence, the linear system $\Gamma$ contains
$\pi_C^* E_C + \pi_D^* \cP_D$ and $\pi_C^* \cP_C + \pi_D^* E_D$.  The
supports of these divisors have no common component.  We deduce from
Proposition~\ref{prop:P-interpolating_equivalent}  that $\Gamma$
is $\cP$--interpolating.

As a consequence, $\Gamma$
is numerically equivalent to
\[
  \Gamma \equiv t_C F_C + t_D F_D,
\]
where $F_C, F_D$ denote the respective numerical equivalences of a
fibre by $\pi_C$ and $\pi_D$ and $t_C, t_D$ denote the respective
degrees of the divisors $\cP_C, \cP_D$ respectively on $C$ and $D$.
Hence, given a divisor $G$ on $X$, the code
$\AGCode{X}{\cP}{G}$ has minimum distance
\[
  d \geq n - t_C F_C \cdot G - t_D F_D \cdot G
\]
by Theorem~\ref{thm:bound}.

\begin{exmp}\label{ex:P1xP1}
  On the product of two projective lines, the divisor class group is
  generated by two classes $H, V$. Let $G = aH + b V$.  For the choice
  of $\Gamma$, one can easily see that $H+V$ is very ample and hence,
  from Theorem~\ref{thm:universal}, the system
  $|(q+1)(H+V)|$ is $\cP$--interpolating. Therefore, we get
  \[
  d \geq n - (q+1)(a+b).
  \]
  Actually the exact minimum distance is known to
  be $n - (q+1)(a+b) + ab$ (see \citeMain[Theorem 2.1 \& Remark 2.2]{CouDuu}).
\end{exmp}

\subsubsection{Fibred surfaces}

Let $\pi : X \rightarrow C$ be a fibred surface over a curve
$C$ of genus $g$. Let $F_1, \ldots, F_r$ be the
fibres under $\pi$ of $r$ distinct rational points of $C$,
let $C_1, \ldots, C_s$
be $s$ distinct sections of $\pi$
and consider the set of rational points $\cP$ given by
\[
\cP = (F_1 \cup \cdots \cup F_r) \cap (C_1\cup \cdots \cup C_s).
\]
Suppose in addition that $r > 2g_C$ where $g_C$ denotes the genus of
$C$ and that there exists $C \in |C_1 + \cdots + C_s|$ whose support
avoids any element of $\cP$.  Then, the complete linear system
$|F_1+\cdots + F_r + C_1 + \cdots + C_s|$ is $\cP$--interpolating.

Indeed, similarly to the previous case, $F_1 + \cdots + F_r$ is the
pullback by $\pi$ of a base point free divisor on $C$ and there exists
a positive divisor $F \sim F_1 + \cdots + F_r$ whose support does not
contain any of the $F_i$'s. Next, the divisors
\[
F_1 + \cdots + F_r + C \qquad {\rm and}
\qquad F + C_1 + \cdots + C_s
\]
have no common component and are both in
$|F_1 + \cdots + F_r + C_1 + \cdots + C_s|$. Thus, according to
Proposition~\ref{prop:P-interpolating_equivalent}, the linear system $\Gamma$
is $\cP$--interpolating.

\subsubsection{Hirzebruch surfaces}
To get a more explicit example, consider the case of a rational ruled
surface, i.e. a Hirzebruch surface $\hirz{e}$. Such a surface is
ruled, i.e. there is a morphism $\pi_e : \hirz{e} \rightarrow \P^1$ with
a section whose image in $\hirz{e}$ is denoted by $S_e$.  This surface
has a discrete Picard group generated by $S_e$ and a fibre $F_e$ and
\[
F_e^2 = 0, \qquad F_e \cdot S_e = 1\qquad {\rm and}\qquad S_e^2 = -e.
\]

The surface can be obtained from $\P^2$ by a sequence of blow up and
blow down:
\begin{itemize}
\item $\hirz{1}$ is the blow up of $\P^2$ at a point $P$.
  Fix a line $L \subseteq \P^2$ containing $P$.  We
  denote by $S_1$ the exceptional divisor and by $F_1$ the
  strict transform of $L$. We have $S_1^2 = -1$ and
  $F_1^2 = 0$. 
\item $\hirz{e+1}$ is obtained from $\hirz{e}$ as follows. Let 
  $P$ be the point at the intersection of $F_e$
  and $S_e$. Blow up $\hirz{e}$
  at $P$ and denote by $E$ the exceptional divisor and by 
  $\widetilde S_e, \widetilde F_e$ the respective strict transforms
  of $S, F$ by the blowup map. Blow down $\widetilde F_e$ and 
  set $F_{e+1}$ and $S_{e+1}$ the respective images of $E$ and $\widetilde
  S_e$ by the blow down map. See Figure~\ref{fig:Sigma_ep1} for
  an illustration.
\end{itemize}

\begin{figure}[!h]
  \centering
  \includegraphics[scale = .35]{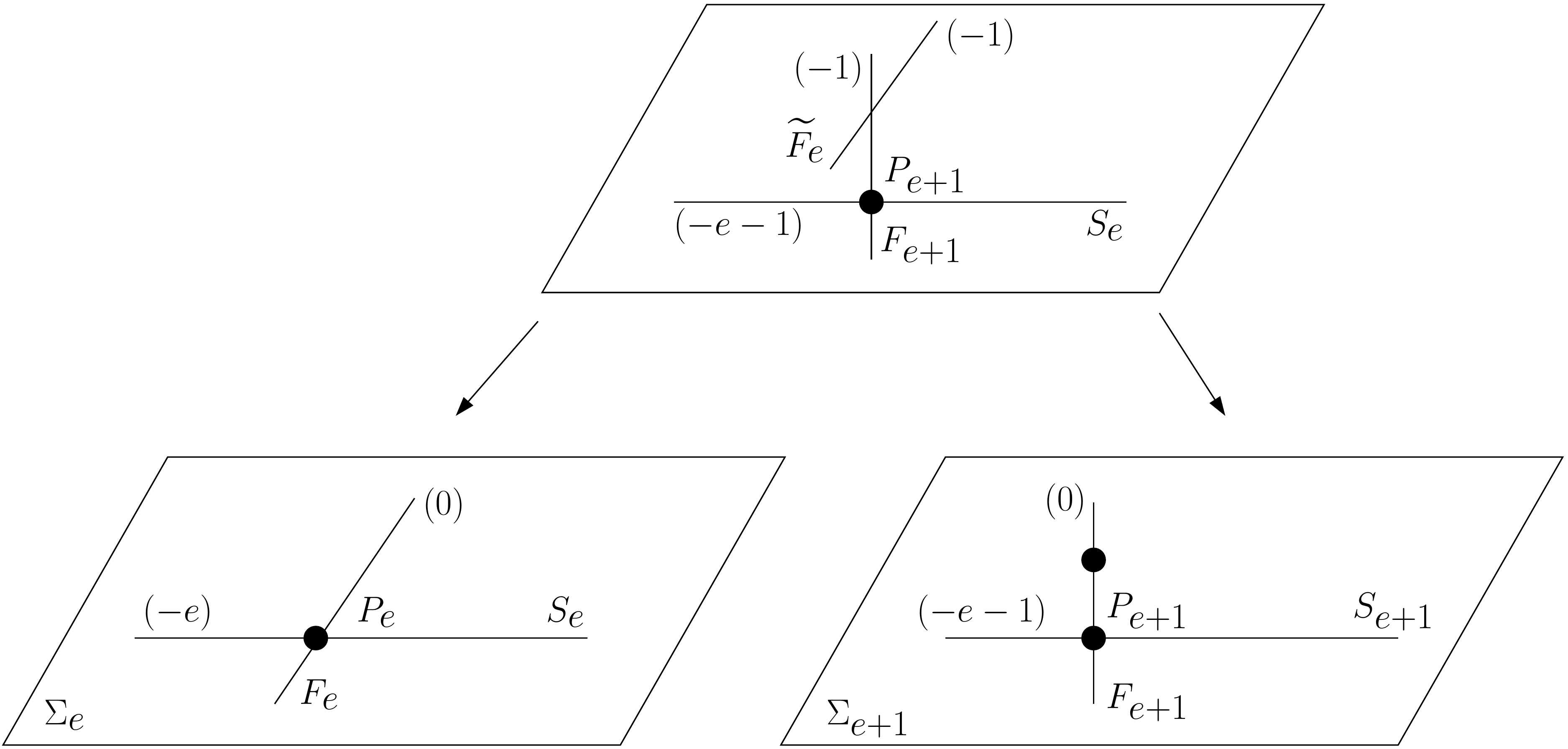}
  \caption{From $\hirz{e}$ to $\hirz{e+1}$}
  \label{fig:Sigma_ep1}
\end{figure}

Such surfaces are rational, in particular there exists a
birational map
\begin{equation}\label{eq:Hirz_birat}
\psi : \P^2 \stackrel{\sim}{\dashrightarrow} \hirz{e}
\end{equation}
which induces an isomorphism
between the affine chart $\P^2 \setminus L$ and the affine chart
$\hirz{e} \setminus (S_e \cup F_e)$.

Consider in $\P^2$ a line $L'$ distinct from $L$ and not containing
$P$. Let $Q$ be the point at the intersection of $L$ and $L'$. The
Zariski closure of the image of $L' \setminus \{Q\}$ by the map $\psi$
of (\ref{eq:Hirz_birat}) is a genus $0$ curve $C$ on $\hirz{e}$ which
is another section of $\pi_e$ which does not meet $S_e$. Therefore, it
satisfies
\[C \cdot S_e = 0 \qquad
  {\rm and} \qquad C \cdot F_e = 1.
\]
Thus,
\begin{equation}\label{eq:C}
C \sim eF_e + S_e.  
\end{equation}

Let $(X: Y: Z)$ be a system of homogeneous coordinates in $\P^2$ such
that $L = \{X = 0\}$, $P = (0:1:0)$ and $Q = (0:0:1)$.
Let $\cP_0$ be the set of rational points
$\{(1:x:y) ~|~ x\in A, \ y \in B\}$ for some sets $A \subseteq \F_q$
and $B \subseteq \F_q$ of respective cardinalities $a, b$. Finally,
let $\cP$ be the image of $\cP_0$ by $\psi$. The points in $\cP$
and $\cP_0$ are represented by filled black dots
\includegraphics[scale = .3]{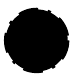}
in Figure~\ref{fig:psi}.

Note first that in $\P^2 \setminus L$ the set of points $\cP_0$
is contained in the following two unions of parallel lines:
\[
\bigcup_{t \in A} \{Y = tX\} \qquad {\rm and} \qquad \bigcup_{u\in B} \{X = uZ\}.
\]
On the left--hand part of Figure~\ref{fig:psi}, the first family of
lines is represented by \includegraphics[scale =
.4]{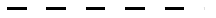} and the second one by
\includegraphics[scale = .4]{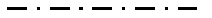} The Zariski closures of
the respective images of these lines by $\psi$ are
denoted as $V$ and $H$ and, from~(\ref{eq:C}), we have
\[
V \sim aC \sim a(eF_e + S_e) \qquad {\rm and} \qquad
H \sim b F_e.
\]

\begin{figure}[!h]
  \centering
  \includegraphics[scale=.45]{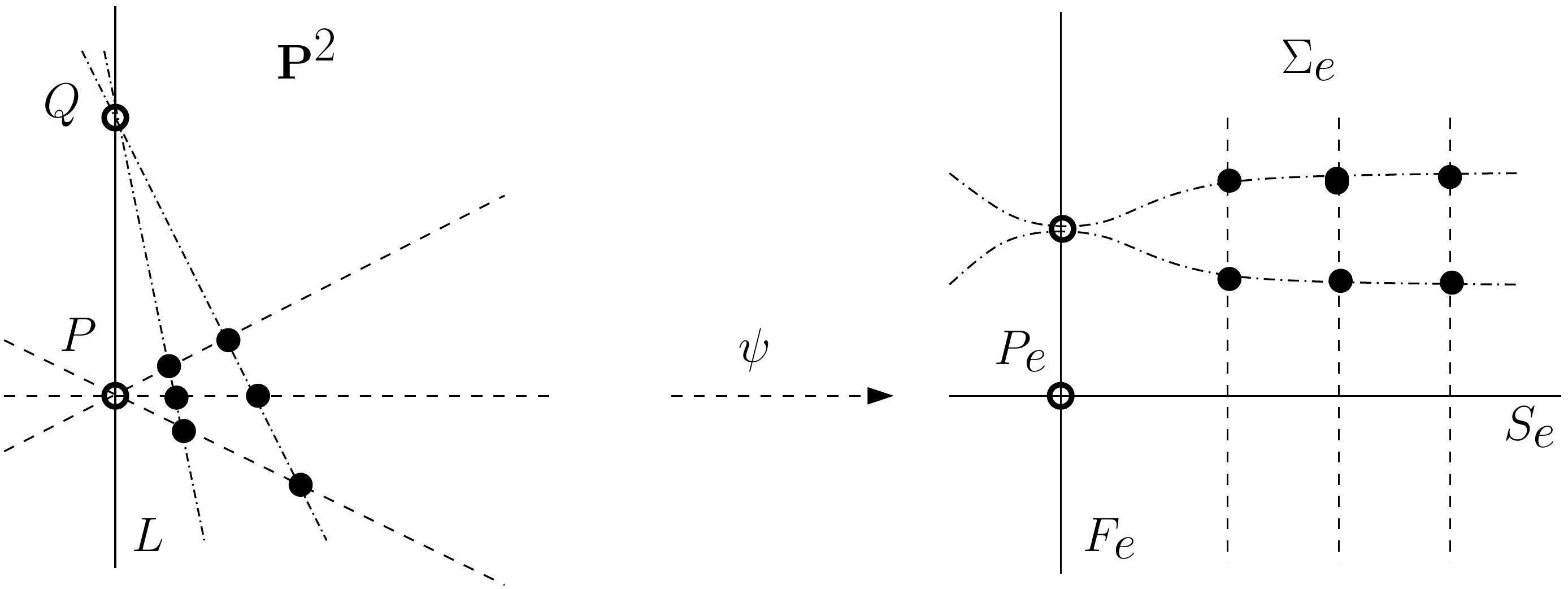}
  \caption{Behaviour of the family of lines under $\psi$}
  \label{fig:psi}
\end{figure}

We claim that the linear system $\Gamma := |(a+be)F_e + bS_e|$ is
$\cP$--interpolating. Indeed,
since $|F_e|$ is base point free, then so is $|bF_e|$ and there exists $H'$
in $|bF_e|$ such that $\cP \cap \suppt{V'} = \emptyset$. 
Similarly, the divisor $eF_e + S_e$ is very ample
\citeMain[Corollary 2.18]{Hartshorne} and hence the corresponding linear
system is base point free. Thus there exists $V' \sim V$
whose support avoids any point of $\cP$. Finally, the divisors 
\[
V+F' \qquad {\rm and } \qquad V'+F
\]
are both in $\Gamma$ and the intersection of their supports is
$\cP$. Therefore, $\Gamma$ is $\cP$--interpolating.

Now, consider a divisor $G \sim uF_e + vS_e$ on $\hirz{e}$. Using the previous
context, we get a code $\AGCode{\hirz{e}}{\cP}{G}$ with length
$n = ab$ and minimum distance
\[
  d \geq n - (a+be)v - ub + ebv.
\]

On the other hand if one wants to take $\cP = \hirz e (\Fq)$, that is
to say evaluating global sections at all the rational points, one can
use \S~\ref{ss:universal}. For this sake, one needs a very ample
divisor.  According to \citeMain[Corollary V.2.18(a)]{Hartshorne}, the
divisor $H = (e+1)F_e + S_e$ is very ample. Therefore, one ca take
$\Gamma = |(q+1)(e+1)F_e + (q+1)S_e|$. For a divisor
$G \sim uF_e +vS_e$ leads to the following lower bound for the minimum
distance.
\begin{align*}
  d &\geq \overbrace{(q+1)^2}^{=n} - (q+1)\left( (e+1)v + u -ev \right)\\
    &\geq (q+1)^2 - (q+1)(u+v).
\end{align*}

\begin{rem}
  The exact minimum distance and dimension of codes from Hirzebruch
  surfaces when the evaluation set is the full set of rational points
  have been computed by Nardi in \citeMain{nardi19}. According to this
  reference, when $v \geq 1$, the genuine minimum distance is
  $d = q(q-u+1)$. In particular, it does not depend on $v$. Here we can
  estimate the defect of our bound compare to the actual minimum
  distance :
  \[
    q(q-u+1) - (q+1)^2 + (q+1)(u+v) = u+v-1 + q(v-1).
  \]
  In particular, the larger the $v$, the worst our estimate.
\end{rem}

\subsection{Previous estimates for the minimum distance in the
  literature}\label{ss:other_estimates}

Let us first list some previous estimates given in the literature,
which can roughly speaking be divided into three categories.

\subsubsection{Using the maximum number of points of curves in a linear
  system: Aubry's bound}
The first category of bounds consists in observing that the minimum
distance is related to the maximum number of rational points of an
element of the linear system $|G|$, that is to say,
\[
d  \geq n - \max_{C \in |G|} |C(\F_q)|.
\]
The problem is then translated into that of getting an upper bound on
such a maximum. This is the approach used for instance by Aubry in
\citeMain{Aubry} who obtained the following result.

\begin{prop}[{\citeMain[Proposition 3.1(ii)]{Aubry1992}}]\label{prop:aubry}
  Let $D$ be a very ample divisor on a nonsingular projective surface $X$.
 Then the minimum distance of $\AGCode{X}{\cP}{D}$ satisfies
 \[
 d \geq n - D^2(q+1).
 \]
\end{prop}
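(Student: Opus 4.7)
The plan is to translate the minimum distance bound into a count of $\F_q$-rational points on a hyperplane section of $X$, and then to control this count by a classical projection argument in the embedding defined by $|D|$.

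First, mimicking the opening of the proof of Theorem~\ref{thm:bound}, any nonzero section $f \in \H^{0}(X,D)$ produces an effective divisor $C := (f) + D \in |D|$ whose support contains every zero of $f$, so the associated codeword $\mathbf c = \mathrm{ev}_\cP(f)$ satisfies
\[
\wt{\mathbf c} \geq n - |\suppt{C} \cap \cP| \geq n - |\suppt{C}(\F_q)|.
\]
It is therefore enough to prove that $|\suppt{C}(\F_q)| \leq D^2(q+1)$ for every effective divisor $C \in |D|$.

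Second, since $D$ is very ample, the complete linear system $|D|$ embeds $X$ as a closed subvariety of $\P^N$ with $N = h^0(X,D)-1$, and in this embedding every $C \in |D|$ is the intersection of $X$ with a hyperplane, hence a one-dimensional closed subscheme of $\P^N$ of degree $D^2$. The underlying reduced curve $\suppt{C}$ then has degree at most $D^2$ in $\P^N$, so the claim reduces to the classical statement that a reduced curve of degree $d$ in $\P^N$ has at most $d(q+1)$ rational points.

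To prove the latter, I would select a linear subspace $\Lambda \subseteq \P^N$ of codimension two, defined over $\F_q$ and disjoint from $\suppt{C}$, and project from $\Lambda$ to obtain a morphism $\pi : \suppt{C} \to \P^1$. Each fibre of $\pi$ is the trace on $\suppt{C}$ of a hyperplane of $\P^N$ containing $\Lambda$, so by Bezout it consists of at most $d \leq D^2$ points. Summing over the $q+1$ rational points of $\P^1$ yields
\[
|\suppt{C}(\F_q)| \leq D^2 (q+1),
\]
as required.

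The main obstacle is ensuring the existence of such a $\Lambda$ defined over $\F_q$ and disjoint from $\suppt{C}$. Over $\overline{\F}_q$ a generic codimension-two subspace avoids any curve in $\P^N$, but over $\F_q$ one has to verify that the $\F_q$-points of the incidence locus of $(N-2)$-planes meeting $\suppt{C}$ do not exhaust the Grassmannian $G(N-1,N+1)(\F_q)$. For $N \geq 3$ this follows from a dimension count combined with a direct counting argument on the Grassmannian; the small-dimensional case $N=2$, which forces the image of $X$ in $\P^N$ to be $\P^2$ itself, either produces an $\F_q$-point off $\suppt{C}$ by the same count, or else falls under the trivial estimate $|\P^2(\F_q)| = q^2+q+1 \leq D^2(q+1)$, valid as soon as $D^2 \geq q$.
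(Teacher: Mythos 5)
The paper itself does not prove this proposition: it is quoted from Aubry, and the only argument offered is the reduction $d \geq n - \max_{C \in |D|} \sharp C(\F_q)$, which is also your first step and is correct (as is the identification of $\deg C = D^2$ in the embedding by $|D|$). The genuine gap is exactly at the place you flag as ``the main obstacle'' and then dismiss: the existence of an $\F_q$-rational codimension-two linear subspace $\Lambda$ disjoint from $\suppt{C}$. No dimension count or Grassmannian point count can supply this in general, because the statement you need it for is false at that level of generality: take for $C$ the union of the $q^2+1$ lines of a spread of $\P^3(\F_q)$ (a partition of the rational points of $\P^3$ into pairwise disjoint rational lines). This is a reduced curve of degree $d=q^2+1$ containing every rational point of $\P^3$, so every $\F_q$-rational line meets it and no centre $\Lambda$ exists; moreover $\sharp C(\F_q)=(q^2+1)(q+1)=d(q+1)$, so the bound is attained with equality by a curve for which the projection argument cannot even start. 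Connectedness of hyperplane sections does not rescue you: the union of all $q^2+q+1$ rational lines of a fixed plane in $\P^3$ is connected, reduced, and again meets every rational line of $\P^3$. Your fallback for $N=2$ is also off: there $X\cong\P^2$, $D$ is a line and $D^2=1$, so the ``trivial estimate $q^2+q+1\le D^2(q+1)$'' fails; that case survives only because $C$ is then a line with exactly $q+1=D^2(q+1)$ rational points.

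The usual proofs of the point-count lemma $\sharp C(\F_q)\le d(q+1)$ --- and the one behind Aubry's cited result --- do not project the whole curve at once; they decompose $C$ into $\F_q$-irreducible components and bound each separately: a component that is a line over $\F_q$ contributes exactly $q+1$ points; a geometrically irreducible component of degree $d_i\ge 2$ is handled by a separate argument (projection from a rational point, or a Segre--Homma--Kim type bound $\le d_iq+1$); a component that is not geometrically irreducible has all its rational points in the pairwise intersections of its conjugate geometric components, which one bounds by B\'ezout. If you wish to keep a single projection, you must either prove the existence of the centre for each component where it is available, or allow $\Lambda$ to meet $C$ and correct the count along the fibres through $\Lambda\cap C$. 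As written, the key lemma is asserted rather than proved, and the proposed mechanism for proving it provably fails on curves of degree comparable to $q^2$.
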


When $G \sim \O_X(1)$, and taking as $\cP$--interpolating system
$\Gamma \sim \O_X(q+1)$ as suggested in \S~\ref{ss:universal}, our
bound yields the very same result as Aubry's bound.  The exemples to
follow show that our bound can actually be much better than Aubry's
one in other situations.  Another advantage of Theorem~\ref{thm:bound}
is that it does not require the very ampleness of the divisor $G$.

\begin{exmp}
  Consider the case of a surface $X$ together with a very ample sheaf
  $\O_X (1)$ and $G$ be a divisor such that $G \sim \mathcal{O}_X (d)$
  for some positive integer $d$. Thus, Aubry's result yields
 \begin{equation}
   \label{eq:aubry}
   d\geq n- (q+1)G^2= n-(q+1)d^2 \deg(X),
\end{equation}
while our bound yields
 \begin{equation}
   \label{eq:couvreur}
   d\geq n-G \cdot \mathcal{O}(q+1)=n - d(q+1)\deg (X).
 \end{equation}
Thus a $d^2$ is replaced by a $d$.  
\end{exmp}

\begin{exmp}
  Back to Example~\ref{ex:P1xP1}, for any positive integers $a, b$, the
  divisor $G = aH+bV$ is very ample and Aubry's bound yields
  \[
  d \geq n - 2ab(q+1).
  \]
  Here our ``$a+b$'' term replaces an ``$ab$'' term in the above
  lower bound.
\end{exmp}

\noindent {\bf Caution.} In the sequel, we discuss two lower bounds
for the minimum distance, that are due to S.~H. Hansen.  One
is presented in \S~\ref{ss:HansenA} and rests on the use of a set of
auxiliary curves. The second one is introduced in
\S~\ref{ss:HansenS} and involves the Seshardi constant.  To avoid
confusion, the first kind of bound will be referred to {\em Hansen
  bound (A)}, where (A) stands for {\em Auxiliary curves} and the
second one to {\em Hansen bound (S)}, where the (S) stands for
{\em Seshadri constant}.

\subsubsection{Using an auxilliary set of curves : Hansen's bound (A)}
\label{ss:HansenA}
The second category of estimates consists in using an auxilliary
family of irreducible curves $C_1, \ldots, C_s$ such that the curve
$C := C_1 \cup \cdots \cup C_s$ contains $\cP$.
Bounds of this kind appear in \citeMain[Theorem 4]{bouganis} and
\citeMain[\S 3.2]{soH}.
For instance, we have the following statement.

\begin{prop}[Hansen bound (A), {\citeMain[Proposition
    3.2]{soH}}]\label{prop:Hansen_avec_courbes_test}
  Let $X$ be a normal projective variety defined over $\F_q$ of
  dimension at least two. Let $C_1, \ldots, C_a$ be (irreducible)
  curves on $X$ and $\cP = \{P_1, \ldots, P_n\}$ be a set of
  $\F_q$--rational points of $X$.  Assume that for all
  $1 \leq i \leq a$, the number of $\F_q$--rational points on $C_i$ is
  less than an integer $N$.  Let $\L$ be a line bundle on $X$ over
  $\F_q$ such that $\L \cdot C_i \geq 0$ for all $i$. Let
  \[
  \ell := \sup_{s \in \H^0({X},{\L})} |\{i ~|~ Z(s)\ {\rm contains\ }C_i\}|,
  \]
  where $Z(s)$ denotes the vanishing locus of $s$. Then the code $\AGCode{X}{\cP}{\L}$
  has length $n$ and minimum distance
  \[
  d \geq n - \ell N - \sum_{i=1}^a \L \cdot C_i.
  \]
  Moreover, if $\L \cdot C_i = \eta \leq N$ for all $i$, then 
  \[
  d \geq n - \ell N - (a - \ell)\eta.
  \]
\end{prop}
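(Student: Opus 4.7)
The plan is to start from an arbitrary nonzero section $s \in \H^0(X, \L)$ producing a nonzero codeword $\textrm{ev}_\cP(s)$, and to bound $|Z(s) \cap \cP|$ from above, since the Hamming weight of the codeword equals $n - |Z(s) \cap \cP|$. Because $\cP \subseteq C_1 \cup \cdots \cup C_a$, one has
\[
|Z(s) \cap \cP| \leq \sum_{i=1}^{a} |Z(s) \cap C_i \cap \cP|,
\]
and the whole argument reduces to controlling each summand.

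Next I would split the index set $\{1,\dots,a\}$ into $I_s := \{i \mid C_i \subseteq Z(s)\}$ and its complement. By the very definition of $\ell$ as a supremum, $\ell_s := |I_s| \leq \ell$. For $i \in I_s$, the whole curve $C_i$ lies inside $Z(s)$, so $|Z(s) \cap C_i \cap \cP| \leq |C_i(\F_q)| \leq N$, giving a contribution at most $\ell_s N \leq \ell N$. For $i \notin I_s$, the intersection $Z(s) \cap C_i$ is proper, hence a zero-dimensional subscheme whose number of closed points is at most its length, which in turn equals $\L \cdot C_i$. Summing these contributions and enlarging the second sum to run over all $i$ (legitimate since $\L \cdot C_i \geq 0$) yields
\[
|Z(s) \cap \cP| \leq \ell N + \sum_{i=1}^{a} \L \cdot C_i,
\]
which is the first claimed inequality.

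For the ``moreover'' part, I would keep track of $\ell_s$ more carefully. Assuming $\L \cdot C_i = \eta$ for every $i$, the same bookkeeping gives
\[
|Z(s) \cap \cP| \leq \ell_s N + (a-\ell_s)\eta = a\eta + \ell_s(N-\eta).
\]
Since $N - \eta \geq 0$, the right-hand side is non-decreasing in $\ell_s$, so using $\ell_s \leq \ell$ yields $|Z(s) \cap \cP| \leq \ell N + (a-\ell)\eta$, hence the announced bound.

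The main obstacle I expect is justifying, in the generality of a normal projective variety of dimension at least two, the bound $|Z(s) \cap C_i| \leq \L \cdot C_i$ for $C_i \not\subseteq Z(s)$. This requires that $\L \cdot C_i$ coincides with $\deg(\L|_{C_i})$, and that a nonzero section of a line bundle of degree $d$ on a projective (possibly singular) curve vanishes on at most $d$ closed points; this can be handled by pulling back to the normalization $\widetilde{C}_i$, where $s|_{C_i}$ becomes a nonzero section of a line bundle of the same degree, and invoking the classical bound on the curve $\widetilde{C}_i$. Once this intersection-theoretic step is in place, the rest of the argument is the elementary splitting described above.
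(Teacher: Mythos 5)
The paper gives no proof of this proposition: it is quoted verbatim from Hansen (Proposition 3.2 of \emph{Error-correcting codes from higher-dimensional varieties}), so there is nothing internal to compare against. Your reconstruction is correct and is essentially Hansen's own argument: decompose the indices according to whether $C_i\subseteq Z(s)$, bound the contribution of the at most $\ell$ absorbed curves by $N$ each, and bound the proper intersections $|Z(s)\cap C_i|$ by $\deg(\L|_{C_i})=\L\cdot C_i$ via the normalization. Two small remarks. First, you correctly supply the hypothesis $\cP\subseteq C_1\cup\cdots\cup C_a$, which the formal statement in the paper omits (it appears only in the surrounding prose of \S 1.9.2); without it the union bound $|Z(s)\cap\cP|\le\sum_i|Z(s)\cap C_i\cap\cP|$ has no starting point and the proposition is false as literally stated. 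Second, your monotonicity argument in $\ell_s$ for the ``moreover'' part silently uses $N\ge\eta$, which is indeed part of the hypothesis there, so the bookkeeping closes. No gaps.
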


\subsubsection{Using Seshadri constants, Hansen bound
  (S)}\label{ss:HansenS}
The third kind of estimate is based on Seshadri constants whose definition is
recalled below.

\begin{defn}[Seshadri constant]
  Let $\L$ be a line bundle on $X$ and $\mathcal P$ be a union of
  closed points of $X$.  Let
  $\pi : \textrm{Bl}_{\mathcal P} X \rightarrow X$ be the blowup of
  $X$ at $\mathcal P$.  Then the local Seshadri constant is defined as
  \[
  \sesh{X}{\cP}{ \L} := \sup \{\varepsilon \in \Q ~|~ \pi^* \L -
    \varepsilon E {\rm \ is \ nef}\},
  \]
  where $E$ denotes the exceptional divisor of $\textrm{Bl}_{\mathcal P}(X)$.
\end{defn}

\begin{prop}[Hansen bounds (S), {\citeMain[Proposition 3.1]{soH}}]\label{prop:Hansen_bound}
  Let $X$ be a smooth projective surface defined over $\F_q$.  Let
  $\L$ be a line bundle on $X$.
  \begin{enumerate}[(S1)]
  \item\label{item:Hansen_Seshadri} Suppose $\L$ is ample with Seshadri constant
    $\sesh{X}{\cP}{\L} \geq \varepsilon$. Then the corresponding
    code has minimum distance
    \[
    d \geq n - \frac{\L^2}{\varepsilon}\cdot
    \]
  \item\label{item:gen_by_global_sections}
      Let $\mathcal I$ be the ideal sheaf of the $\F_q$--rational
  points $\cP  = \{P_1, \ldots, P_n\}$.
  Suppose $\L^{\otimes \xi}
    \otimes \mathcal I$ is generated by global sections (such $\xi \in
    \mathbf N$ exists for instance if $\L$ is ample). Then,
    \[
    d \geq n - \xi \L^2.
    \]
  \end{enumerate}
\end{prop}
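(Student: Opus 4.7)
The strategy is to reduce everything to an intersection-theoretic computation on the blowup $\pi : \widetilde{X} := \Bl{\cP}{X} \rightarrow X$, with exceptional divisor $E = \sum_{P\in \cP} E_P$. As in the proof of Theorem~\ref{thm:bound}, a nonzero codeword corresponds to a section $f \in \H^0(X,\L)$ with effective divisor $D \sim \L$, and its weight equals $n - |\cP \cap \suppt{D}|$. Writing $\pi^* D = \widetilde{D} + \sum_P m_P E_P$ with $m_P := \textrm{mult}_P(D)$, the bound $m_P \geq 1$ for every $P \in \cP \cap \suppt{D}$ shows that it suffices to bound $\sum_P m_P$ from above independently of $f$.

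For part (S\ref{item:Hansen_Seshadri}), the definition of the Seshadri constant ensures that $\pi^* \L - \varepsilon E$ is nef for every rational $\varepsilon < \sesh{X}{\cP}{\L}$, and by closedness of the nef cone in the real N\'eron--Severi group this extends to $\varepsilon = \sesh{X}{\cP}{\L}$ itself. Pairing this nef class with the effective divisor $\widetilde{D}$ and using the standard intersection relations on a blowup at finitely many distinct points ($E_P\cdot E_Q = -\delta_{PQ}$, $\pi^* \L \cdot E_P = 0$, and $(\pi^* \L)^2 = \L^2$), one gets $\L^2 - \varepsilon \sum_P m_P \geq 0$, which translates directly into the announced lower bound on $d$.

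For part (S\ref{item:gen_by_global_sections}), the plan is to deduce it from (S\ref{item:Hansen_Seshadri}) by proving $\sesh{X}{\cP}{\L} \geq 1/\xi$. By the universal property of the blowup, $\pi^{-1}(\I) \cdot \O_{\widetilde X} = \O_{\widetilde X}(-E)$, so global generation of $\L^{\otimes \xi} \otimes \I$ on $X$ pulls back to global generation of $\O_{\widetilde X}(\xi \pi^* \L - E)$. Since a globally generated line bundle is nef, $\pi^* \L - \tfrac{1}{\xi} E$ is nef in the real N\'eron--Severi group, which is exactly $\sesh{X}{\cP}{\L} \geq 1/\xi$, and applying (S\ref{item:Hansen_Seshadri}) concludes. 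The only technical subtlety in the plan is the passage from rational $\varepsilon$ to $\varepsilon = \sesh{X}{\cP}{\L}$ in (S\ref{item:Hansen_Seshadri}) when this constant is irrational; it is handled by the standard closedness of the nef cone in $N^1(\widetilde X)_\R$, while the rest is routine intersection theory on the blowup.
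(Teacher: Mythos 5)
Your argument is correct. Note that the paper does not prove this proposition at all: it is quoted from Hansen \citeMain[Proposition 3.1]{soH}, so there is no in-paper proof to compare against. Your blowup computation is essentially Hansen's original argument, and it is the same mechanism the paper itself deploys immediately afterwards in the theorem bounding $d^* = n - \Gamma\cdot G$ by $(1-\sesh{X}{\cP}{G})n$, namely pairing the nef class $\pi^*\L - \varepsilon E$ against a strict transform and expanding with $E_P\cdot E_Q=-\delta_{PQ}$, $\pi^*\L\cdot E_P=0$. One small point of hygiene: in (S\ref{item:gen_by_global_sections}) the line bundle $\L$ is not assumed ample, so you should not invoke the \emph{statement} of (S\ref{item:Hansen_Seshadri}) after establishing $\sesh{X}{\cP}{\L}\geq 1/\xi$; but your proof of (S\ref{item:Hansen_Seshadri}) never uses ampleness of $\L$, only the nefness of $\pi^*\L-\varepsilon E$, so pairing the globally generated (hence nef) class $\xi\pi^*\L - E$ directly with $\widetilde{D}$ yields $\sum_P m_P \leq \xi\L^2$ and the argument closes. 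The passage from rational $\varepsilon$ to the possibly irrational Seshadri constant is also handled correctly (either by closedness of the nef cone or simply by taking a limit in the inequality $\L^2-\varepsilon'\sum_P m_P\geq 0$ over rationals $\varepsilon'<\varepsilon$).
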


\begin{rem}
  Actually, in \citeMain{soH} the author states the result for codes on
  an arbitrary variety. 
\end{rem}

Note that using Proposition~\ref{prop:Hansen_avec_courbes_test}, Hansen
\citeMain{soH}
obtains a better lower bound, namely
\[
d \geq n - (q+1)(a+b) + ab
\]
which turns out to be the actual minimum distance as proved in 
\citeMain[Theorem 2.1 \& Remark 2.2]{CouDuu}.

\begin{rem}
  Actually, Hansen bound (S\ref{item:gen_by_global_sections}) is very
  close to Aubry's one. In particular, when $\L$ is a very ample line
  bundle on $X$, then $\L ^{\otimes (q+1)} \otimes \mathcal I$ is
  always generated by its global sections.  Indeed, consider the
  embedding $X \hookrightarrow \P^\ell$ associated to $\L$, using the
  notation of Lemma~\ref{lem:arrangement_of_hyperplanes} one can check
  that the global sections of the form $x_i^q x_j - x_i x_j^q$
  generate locally the sheaf $\L ^{\otimes (q+1)} \otimes \mathcal I$
  at any point.  Consequently, for the choice $\xi = q+1$, Hansen's
  bound of
  Proposition~\ref{prop:Hansen_bound}(\ref{item:gen_by_global_sections})
  is nothing but Aubry's bound.
\end{rem}

\subsection{Further discussion about Seshadri constants}
Our Theorem~\ref{thm:bound}, which provides a lower for the minimum
distance can be related with the Seshadri constant as follows.
  \begin{thm}
    Let $d^* = n - \Gamma \cdot G$ be the lower bound for the minimum distance
    given by Theorem~\ref{thm:bound} of $\AGCode{X}{\cP}{G}$. We have
    \[
    d^* \leq (1- \sesh{X}{\cP}{G})n.
    \]    
  \end{thm}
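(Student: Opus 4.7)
The goal, after rearrangement, is to prove the equivalent inequality $\Gamma \cdot G \geq \sesh{X}{\cP}{G} \cdot n$. My plan is to lift everything to the blowup of $\Xbar$ at $\cP$ and play the nefness of $\pi^*G-\varepsilon E$ against the strict transform of a generic element of $\Gamma-\cP$.

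More precisely, let $\pi:\widetilde{X}\to \Xbar$ be the blowup of $\Xbar$ at $\cP$, with exceptional divisor $E=E_1+\cdots+E_n$ (one component over each point of $\cP$). Fix any rational $\varepsilon<\sesh{X}{\cP}{G}$, so that $\pi^*G-\varepsilon E$ is nef on $\widetilde{X}$. Thanks to Proposition~\ref{prop:P-interpolating_equivalent}, I may pick a section $A\in\Gamma-\cP$; since $A$ vanishes at each $P_i\in\cP$ with some multiplicity $m_i\geq 1$, its strict transform is the effective divisor $\widetilde A=\pi^*A-\sum_{i=1}^n m_i E_i$ on $\widetilde X$. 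Nef against effective yields
\[
0 \;\leq\; (\pi^*G-\varepsilon E)\cdot \widetilde A.
\]

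The intersection numbers on the right are straightforward: by the projection formula $\pi^*G\cdot \pi^*A=G\cdot A$ and $\pi^*G\cdot E_i=0$, so $\pi^*G\cdot \widetilde A=G\cdot A$, while $E_i\cdot E_j=-\delta_{ij}$ and $E_i\cdot\pi^*A=0$ give $E\cdot \widetilde A=\sum_i m_i$. Substituting, we get $G\cdot A\geq \varepsilon\sum_i m_i\geq \varepsilon n$ because each $m_i\geq 1$. Since $A$ is an element of a linear system whose general element has class $\Gamma$, we have $G\cdot A=G\cdot\Gamma$, and letting $\varepsilon\uparrow \sesh{X}{\cP}{G}$ yields $\Gamma\cdot G\geq \sesh{X}{\cP}{G}\cdot n$, which is the claim.

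I do not foresee a serious obstacle: the argument is entirely formal once the Seshadri condition is translated into the nefness of $\pi^*G-\varepsilon E$. The only point requiring a little care is justifying that $A$ can be chosen with every $m_i\geq 1$ (this is the content of $A\in\Gamma-\cP$) and that the multiplicity-vs-strict-transform formula is applied correctly even when some $m_i$ exceed $1$; the inequality only becomes stronger in that case. Note also that if $G$ is not nef the statement is automatic, since then $\sesh{X}{\cP}{G}\leq 0$ and $(1-\sesh{X}{\cP}{G})n\geq n\geq d^*$.
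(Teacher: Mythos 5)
Your argument is correct and is essentially the paper's own proof: both pass to the blowup at $\cP$, intersect the nef class $\pi^*G-\varepsilon E$ with the strict transform of a member of $\Gamma-\cP$, use that the multiplicities at the points of $\cP$ are at least $1$ by the $\cP$--interpolating hypothesis, and let $\varepsilon$ tend to the Seshadri constant. Your choice of a single section $A\in\Gamma-\cP$ rather than the paper's $\mathrm{mult}_P(\Gamma)$ is an immaterial (if anything slightly cleaner) variant of the same computation.
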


  \begin{proof}
  Let $a \in \Q$ such that $\pi^* G - a E$ is nef. Then,
  consider the strict transform $\widetilde{\Gamma}$ of $\Gamma$
  by the blowup map $\pi : \textrm{Bl}_{\mathcal P}(X) \rightarrow X$.
  We have
  \begin{align*}
    (\pi^* G - aE) \cdot \widetilde{\Gamma} & \geq 0,
  \end{align*}
  that is to say,
  \begin{align*}
    (\pi^* G - aE) \cdot (\pi^* \Gamma - \sum_{p\in \mathcal P}
    \textrm{mult}_P(\Gamma) E_P) & \geq 0,
  \end{align*}
  where
  $\textrm{mult}_P (\Gamma) = \min \{\textrm{mult}_P(C) ~|~ C\in
  \Gamma\}$.
  This leads to
  \begin{align*}
    \Gamma \cdot G - a\sum_{P \in \mathcal P} \textrm{mult}_P (\Gamma) \geq 0.
  \end{align*}
  By definition of a $\mathcal P$--interpolating system,
  $\textrm{mult}_P (\Gamma) \geq 1$ for all $P \in \mathcal P$.
  This entails
  $
  \Gamma \cdot G \geq  an.
  $
  This holds for any $a \in \Q$ such that $\pi^* G - a E$ is nef, and hence,
  we get
  \begin{equation}\label{eq:upper_bound_seshadri}
  \Gamma \cdot G \geq \sesh{X}{\cP}{G}n.
\end{equation}    
  \end{proof}

\begin{rem}
  Inequality~(\ref{eq:upper_bound_seshadri}) suggest another interesting
  application of $\cP$--inter\-po\-lating linear systems. They permit to get
  upper bounds for Seshadri constants.
\end{rem}

\subsection{Behaviour in towers}\label{s:behaviour_in_towers}

Another interest of our approach is that our criterion can be lifted
by a finite morphism. Hence it can be used to estimate the asymptotic
parameters of codes from towers of surfaces. 

\begin{prop}
  Let $\varphi: Y \rightarrow X$ be a finite morphism of smooth
  projective surfaces. Let $\cP$ be a set of rational points of $X$, and 
  $\cP_0$ be a set of rational points of
  $Y$ such that $\varphi (\cP_0)\subseteq \cP$. If $\Gamma$ is a
  $\cP$--interpolating  linear
  system on $X$, then $\varphi^{*} \Gamma$ is $\cP_0$--interpolating.
\end{prop}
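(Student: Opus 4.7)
The plan is to apply the equivalent characterization of $\cP$-interpolating systems given in Proposition~\ref{prop:P-interpolating_equivalent}: it suffices to exhibit two sections of $\varphi^{*}\Gamma - \cP_0$ whose supports share no common irreducible component on $\Ybar$. By hypothesis and that proposition, $\Gamma - \cP$ already contains two sections $\overline{A}, \overline{B}$ whose supports on $\Xbar$ have no common irreducible component. The natural candidates on $\Ybar$ are their pullbacks $\varphi^{*}\overline{A}$ and $\varphi^{*}\overline{B}$ under the base change of $\varphi$ to $\Fqbar$.

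First I would verify that both pullbacks lie in $\varphi^{*}\Gamma - \cP_0$. Since $\cP \subseteq \suppt{\overline{A}}$ and $\varphi(\cP_0)\subseteq \cP$, we get $\cP_0 \subseteq \varphi^{-1}(\suppt{\overline{A}}) = \suppt{\varphi^{*}\overline{A}}$, and likewise for $\overline{B}$. Thus $\varphi^{*}\overline{A}$ and $\varphi^{*}\overline{B}$ are members of $\varphi^{*}\Gamma$ whose supports contain $\cP_0$, as needed.

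The main step is the non-existence of a common irreducible component between $\suppt{\varphi^{*}\overline{A}}$ and $\suppt{\varphi^{*}\overline{B}}$. Suppose for contradiction that $Z$ is such a common component; as an irreducible component of a divisor on the smooth surface $\Ybar$, $Z$ is an irreducible curve. Because $\varphi$ is finite, its fibres are zero-dimensional, so $\dim \varphi(Z) = \dim Z = 1$ and $\varphi(Z)$ is an irreducible curve on $\Xbar$. From $Z \subseteq \varphi^{-1}(\suppt{\overline{A}})$ one deduces $\varphi(Z)\subseteq \suppt{\overline{A}}$, and symmetrically $\varphi(Z)\subseteq \suppt{\overline{B}}$; this contradicts the hypothesis that $\overline{A}$ and $\overline{B}$ share no common irreducible component. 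Invoking the reverse direction of Proposition~\ref{prop:P-interpolating_equivalent} then concludes that $\varphi^{*}\Gamma$ is $\cP_0$-interpolating. The only genuinely delicate point is this last dimension argument: one must use the finiteness of $\varphi$, not merely its dominance, to rule out the possibility that a common component $Z$ of the pullback divisors collapses under $\varphi$ to a point that happens to sit in the zero-dimensional locus $\suppt{\overline{A}}\cap \suppt{\overline{B}}$.
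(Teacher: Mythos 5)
Your proof is correct and is essentially the paper's argument in a slightly different dress: the paper verifies the two conditions of Definition~\ref{def:P-interpolating} directly, showing that a base curve $Y_0$ of $\varphi^{*}\Gamma - \cP_0$ would have to map onto a curve contained in the zero-dimensional base locus of $\Gamma - \cP$, which finiteness of $\varphi$ forbids, whereas you route the same observation through the two-section characterization of Proposition~\ref{prop:P-interpolating_equivalent} by pulling back the witnesses $\overline A,\overline B$. The decisive point is identical in both versions, namely that a finite morphism cannot collapse the offending curve ($Z$ in your notation, $Y_0$ in the paper's) to a point, and your closing remark correctly identifies this as the place where finiteness, not mere dominance, is used.
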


\begin{proof}
  Let $H\in \Gamma - \cP$, then $\varphi^{*} H$ is in
  $\varphi^{*}\Gamma - \cP_0$ since $\varphi (\cP_0)\subseteq
  \cP$. Assume now that $\varphi^{*} \Gamma - \cP_0$ had a base
  curve and let $Y_0$ be an irreducible component of this base
  curve. This entails that $\varphi (Y_0)$ is in the base locus of
  $\Gamma - \cP$. Since by assumption this linear
  system has no base curve, $\varphi$ maps $Y$ into a single
  point. This contradicts the finiteness of $\varphi$.
\end{proof}

Note that no previous work in the literature investigated the
behaviour of a lower bound for the minimum distance in such a relative
case.  For this reason, and in order to push further the comparison
with other known bounds, we conclude this section by investigating the
behaviour of these bounds under morphisms.

\subsubsection{Aubry's bound} This bound is simple to apply
and hence could be used for estimates in towers. However, it requires
to have a very ample divisor at each level of the tower. Note that
even for a finite morphism, $\pi : Y \rightarrow X$, given a very
ample divisor $G$ on $X$, then, from \citeMain[Prop 5.1.12]{ega2},
$\pi^* G$ is ample on $Y$ but not necessarily very ample.

\subsubsection{Hansen bound (A)}
Contrarily to our bound, Hansen's bound (A) does not seem to be usable
for asymptotics. Even in the case of a single morphism
$\pi : Y \rightarrow X$. Let $\cP_X$ be a set of rational points of
$X$, $G$ be a divisor and set of curves $C_1, \ldots, C_a$ as in
Proposition~\ref{prop:Hansen_avec_courbes_test}.  In addition, suppose
there exists a set of rational points $\cP_Y$ of $Y$ such that
$\pi(\cP_Y) = \cP_X$. To study the code $\AGCode{P}{\cP_Y}{\pi^* G}$
one could consider the set of curves $\pi^* C_1, \ldots, \pi^* C_a$.
In the lower bound from
Proposition~\ref{prop:Hansen_avec_courbes_test},
\begin{align*}
d & \geq |\cP_Y| - \ell_Y N_Y - \sum_{i=1}^a \pi^* G \cdot \pi^* C_i\\
  & \geq |\cP_Y| - \ell_Y N_Y - (\deg \pi) \sum_{i=1}^a G \cdot  C_i.
\end{align*}
Thus, a part of the lower bound can be deduced from the lower
bound for the minimum distance of $\AGCode{X}{\cP}{G}$ and $\deg \pi$.
Even $N_Y$ can roughly be bounded above by $(\deg \pi)N$. However,
there does not seem to be a manner to deduce $\ell_Y$ from $\ell$.

\subsubsection{Hansen bounds (S)} Contrarily to the two previous ones,
the bounds given in Proposition~\ref{prop:Hansen_bound} behave well under
finite morphisms, and can be applied
to towers of finite morphisms. To prove it, first notice that the
criterion of
Proposition~\ref{prop:Hansen_bound}(\ref{item:Hansen_Seshadri})
rests on an assumption of ampleness. Fortunately, as we already
noticed, the pullback of an ample divisor by a finite map between
smooth surfaces is ample \citeMain[Prop 5.1.12]{ega2}. Next, to prove the
good behaviour of Hansen criteria in the relative case, we use the
following statements.

\begin{prop}
  Let $Y$ be a smooth projective geometrically connected surface over
  $\Fq$ and $\pi:Y \rightarrow X$ be a finite morphism. Then,
  \[
    \sesh{X}{\cP}{G} = \sesh{Y}{\cQ}{\pi^* G},
  \]
  where $\cQ$ is the set of points of $\Ybar$ defined as
  $\cQ := \pi^{-1}(\cP)$.
\end{prop}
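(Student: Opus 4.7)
The strategy is to identify both Seshadri constants as suprema of rational $\varepsilon$ making a certain $\Q$-divisor on a blowup nef, and then to transfer nefness along an induced morphism between blowups. Let $f\colon \widetilde X \to X$ and $g\colon \widetilde Y \to Y$ denote the blowups at $\cP$ and $\cQ$, with exceptional divisors $E_X$ and $E_Y$ respectively.

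The key geometric input is the existence of an induced finite surjective morphism $\widetilde\pi\colon \widetilde Y \to \widetilde X$ lifting $\pi$, such that $\widetilde\pi^* E_X = E_Y$ and $\widetilde\pi^* f^* G = g^* \pi^* G$ (the latter by functoriality of pullback). When $\pi$ is \'etale above $\cP$---the situation of interest for the towers considered later in the paper---this is immediate from the fact that blowing up commutes with \'etale base change, so that $\widetilde Y$ is canonically identified with the fibre product $Y \times_X \widetilde X$.

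Granted such a $\widetilde\pi$, the proof rests on two standard facts about nefness: (i) the pullback of a nef $\Q$-divisor by any morphism is nef, and (ii) for a surjective morphism $h\colon V \to W$ of projective varieties, a divisor $L$ on $W$ is nef if and only if $h^* L$ is nef on $V$, the nontrivial direction using that every irreducible curve on $W$ admits an irreducible preimage on $V$ mapping finitely onto it. Applied to $\widetilde\pi$, these yield that $f^* G - \varepsilon E_X$ is nef on $\widetilde X$ if and only if $\widetilde\pi^*(f^* G - \varepsilon E_X) = g^* \pi^* G - \varepsilon E_Y$ is nef on $\widetilde Y$; passing to the supremum over admissible $\varepsilon \in \Q$ gives the claimed equality.

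The main obstacle, in my view, is handling possible ramification of $\pi$ at points of $\cQ$: then $Y \times_X \widetilde X$ need not be smooth, the identity $\widetilde\pi^* E_X = E_Y$ must be replaced by $\widetilde\pi^* E_X = \sum_{Q \in \cQ} e_Q E_Q$ for local ramification indices $e_Q$, and the rational map $\widetilde Y \dashrightarrow \widetilde X$ may only become a morphism after further blowing up $\widetilde Y$. In the \'etale setting relevant to the rest of the paper this difficulty does not arise and the argument sketched above is complete; otherwise one must either resolve the indeterminacy of $\widetilde Y \dashrightarrow \widetilde X$ by a common cover $Z$ and compare $\sum e_Q E_Q$ with $E_Y$ on it, or interpret $\cQ$ as the scheme-theoretic preimage of $\cP$ so that the fibre product identification is tautological.
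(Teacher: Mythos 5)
Your argument is essentially the paper's own: both proofs compare nefness of $p_X^*G-\varepsilon E_X$ on $\Bl{\cP}{X}$ and of $p_Y^*\pi^*G-\varepsilon E_Y$ on $\Bl{\cQ}{Y}$ through the induced map $\widetilde\pi\colon \Bl{\cQ}{Y}\to\Bl{\cP}{X}$, using the projection formula for one inequality and pullback of curves for the other --- which is exactly the content of your fact (ii). The ramification caveat you raise is, however, not a pedantic one, and the paper's proof shares the gap: the existence of $\widetilde\pi$ as a morphism and the identity $\widetilde\pi^*E_X=E_Y$ are used without justification, and without unramifiedness of $\pi$ over $\cP$ the statement is in fact \emph{false}. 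For instance, take $\pi\colon\P^2\to\P^2$, $(x:y:z)\mapsto(x^2:y^2:z^2)$, $\cP=\{(1:0:0)\}$ and $G$ a line; then $\cQ=\{(1:0:0)\}$ and $\sesh{X}{\cP}{G}=1$, while $\pi^*G\sim 2G$ gives $\sesh{Y}{\cQ}{\pi^*G}=2$ by homogeneity of the Seshadri constant. So your proof is complete precisely in the unramified (e.g.\ totally split) situation that the paper actually needs for its towers, and that hypothesis ought to be added to the statement; your two suggested repairs for the general case (resolving the indeterminacy of $\Bl{\cQ}{Y}\dashrightarrow\Bl{\cP}{X}$, or taking the scheme-theoretic preimage) would at best yield an inequality, not the stated equality.
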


\begin{proof}
  Consider the following diagram:
  \medskip

  \centerline{\xymatrix{\relax
      Y \ar[d]^{\pi} & \Bl{\cQ}{Y} \ar[l]_-{p_Y} \ar[d]^{\widetilde \pi} \\
      X              & \Bl{\cP}{X} \ar[l]^-{p_X}
    },
  }
  
  \medskip
  \noindent where $\Bl{\cP}{X}$ (resp. $\Bl{\cQ}{Y}$) denotes the
  blowup of $X$ at $\cP$ (resp. the blowup of $Y$ at $\cQ$). In
  addition, we denote respectively by $E_X$ and $E_Y$ the exceptional
  divisor of $\Bl{\cP}{X}$ and $\Bl{\cQ}{Y}$.  Let $s \in \Q$ be
  such that $p_X^* G - sE_X$ is nef. That is $s \leq \varepsilon (X, \cP, G)$.
  Let $C$ be a curve on $\Bl{\cQ}{Y}$ and suppose that
  \[
    C \cdot (p_Y^* \pi^* G - s E_Y) < 0.
  \]
  By the commutativity of the diagram, we get
  \[
    C \cdot (\widetilde \pi^* p_X^* G - s \widetilde \pi^* E_X) < 0.
  \]
  By the projection formula,
  \[
    {\widetilde \pi}_*C \cdot (p_X^* G - s E_X) < 0,
  \]
  which contradicts the original assumption on $s$.
  As a consequence, we get
  \[
    \sesh{X}{\cP}{G} \leq \sesh{Y}{\cQ}{\pi^* G}.
  \]

  Conversely, consider a rational number $s \leq \sesh{Y}{\cQ}{\pi^* G}$
  and a curve $D$ on $\Bl{\cP}{X}$ such that $D \cdot (p_X^* G - sE_X) < 0$.
  Then,
  \begin{align*}
    \widetilde \pi^* D \cdot (\widetilde \pi^* p_X^* G - s \widetilde
    \pi^* E_X) &< 0,\\
   \hbox{that is~} \widetilde \pi^* D \cdot ( p_Y^* \pi^* G - s
    E_Y) &< 0,
  \end{align*}
  which contradicts the original assumption on $s$ and concludes the
  proof.
\end{proof}

\begin{prop}
  Let $Y$ be a smooth projective geometrically connected surface over
  $\Fq$ and $\pi:Y \rightarrow X$ be a finite morphism. Let $\cQ$ be
  the set of points of $\Ybar$ defined as
  $\mathcal Q := \pi^{-1}(\cP)$. Let $\I_{\cP}$ (resp. $\I_{\cQ}$) be
  the ideal sheaf of $\O_X$ (resp. $\O_Y$) associated to the set of
  rational points $\cP$ (resp. $\cQ$).
  Suppose that the rational points in $\cP$ do not lie
  in the ramification locus of $\pi$. Let $\L$ be a line
  bundle on $X$ and $\xi$ a positive integer such that
  $\L^{\otimes \xi} \otimes \I_\cP$ is generated by its global
  sections, then
  $\pi^* \L^{\otimes \xi} \otimes \I_{\cQ}$ is generated by its global
  sections.
\end{prop}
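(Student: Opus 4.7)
The argument is essentially formal and splits into three short steps, with the unramifiedness hypothesis intervening only at a single local algebraic step.

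First, I would unravel the natural sheaf maps. Viewing $\L^{\otimes \xi} \otimes \I_{\cP}$ as a subsheaf of $\L^{\otimes \xi}$ through the inclusion $\I_{\cP} \hookrightarrow \O_X$, pulling back gives a morphism
\[
\pi^{*}(\L^{\otimes \xi} \otimes \I_{\cP}) \;=\; \pi^{*}\L^{\otimes \xi} \otimes \pi^{*}\I_{\cP} \;\longrightarrow\; \pi^{*}\L^{\otimes \xi},
\]
whose image is precisely $\pi^{*}\L^{\otimes \xi} \otimes (\I_{\cP} \cdot \O_Y)$, where $\I_{\cP} \cdot \O_Y$ denotes the inverse image ideal sheaf, i.e. the image of $\pi^{*}\I_{\cP}\to \O_Y$. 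In particular the above map factors through a \emph{surjection}
\[
\pi^{*}(\L^{\otimes \xi} \otimes \I_{\cP}) \;\twoheadrightarrow\; \pi^{*}\L^{\otimes \xi} \otimes (\I_{\cP} \cdot \O_Y).
\]

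Second, I would identify $\I_{\cP} \cdot \O_Y$ with $\I_{\cQ}$. Since both are quasi-coherent ideal sheaves, equality can be checked stalk by stalk at points of $Y$. Away from $\cQ$ both sheaves equal $\O_Y$, so the only content is at a point $Q\in \cQ$ with image $P=\pi(Q)\in \cP$. There the equality $\I_{\cP,P}\cdot \O_{Y,Q} = \I_{\cQ,Q}$ is equivalent to $\m_P \O_{Y,Q} = \m_Q$, which is exactly the definition of $\pi$ being unramified at $Q$. This is guaranteed by the hypothesis that $\cP$ avoids the ramification locus. This is the main (and only) nontrivial step, and it is where the unramifiedness assumption is essential.

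Finally, I would combine the two previous steps with the well-known fact that the pullback of a globally generated sheaf is globally generated. Since $\L^{\otimes \xi}\otimes \I_{\cP}$ is globally generated by hypothesis, so is $\pi^{*}(\L^{\otimes \xi}\otimes \I_{\cP})$. Composing with the surjection established above,
\[
\pi^{*}(\L^{\otimes \xi}\otimes \I_{\cP}) \twoheadrightarrow \pi^{*}\L^{\otimes \xi} \otimes \I_{\cQ},
\]
we conclude that $\pi^{*}\L^{\otimes \xi}\otimes \I_{\cQ}$ is globally generated, as desired.
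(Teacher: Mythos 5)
Your proof is correct and follows essentially the same route as the paper's: pull back the global generation along $\pi$ and identify the pulled-back ideal sheaf with $\I_{\cQ}$ via the unramifiedness hypothesis at the points of $\cP$. The only minor difference is that by factoring through the inverse image ideal sheaf $\I_{\cP}\cdot\O_Y$ and using only the right-exactness of $\pi^{*}$, you sidestep the paper's appeal to the flatness of the finite morphism between smooth surfaces.
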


\begin{proof} The sheaf $\L^{\otimes \xi} \otimes \I_\cP$ is generated by
  its global sections, which is equivalent to the existence of a surjective
  morphism
  \[
    \O_X^m \rightarrow \L^{\otimes \xi} \otimes \I_\cP
  \]
  for some positive integer $m$.  Since $\pi$ is finite, it is flat
  (\citeMain[Exercise 9.3(a)]{Hartshorne}) and hence the functor $\pi^*$ is
  exact. Thus, by pulling back by $\pi$ and using the exactness of
  $\pi^*$ we get a surjective morphism
  \[
    \O_Y^m \rightarrow \pi^* \L^{\otimes \xi} \otimes \pi^* \I_\cP
  \]
  and the non ramification hypothesis entails that $\pi^* \I_\cP = \I_\cQ$.
\end{proof}

Thanks to the previous results, we can assert that Hansen's bounds (S)
stated in Proposition~\ref{prop:Hansen_bound} applies in the relative
case. However, one needs to be careful about one fact: to get a code over
the same ground field, the set of points $\cQ = \pi^{-1}(\cP)$ should
contain only rational points. In addition, for
Proposition~\ref{prop:Hansen_bound}~(S\ref{item:gen_by_global_sections})
to hold, $\pi$ should not be ramified at the points of $\cP$ and
hence, $\cP$ should split completely.

\subsubsection{Summary of the behaviours of various bounds under
  finite morphisms}
Given a surface $X$ with a finite set of
rational points $\cP$ and a divisor $G$, an interesting question would be the following. Consider a tower of finite covers
\[
  \xymatrix{
    \relax
    \vdots \ar[d] \\
    X_n \ar[d]\\
    \vdots \ar[d]\\
    X_1 \ar[d]\\
    X
  }
\]
in which $\cP$ split totally. Is it possible to deduce lower bounds
for the parameters of the code
$\AGCode{X_n}{\pi_n^{-1}(\cP)}{\pi_n^* G}$, where $\pi_n$ denotes the
composed morphism $\pi_n : X_n \rightarrow X$, from some information
defined only on the bottom surface $X$?
\begin{itemize}
\item {\em Aubry's bound (Proposition~\ref{prop:aubry}).} Partially
  possible but a very ampleness condition on $\pi_n^* G$ to assert for
  every $n$;
\item {\em Hansen bound (A)
    (Proposition~\ref{prop:Hansen_avec_courbes_test}).} Seems
  difficult;
\item {\em Hansen bounds (S) (Proposition~\ref{prop:Hansen_bound}).}
  Yes : one only needs to know some lower bound for a Seshadri constant
  on the bottom surface $X$ or the integer $\xi$ such that
  $\L^{\otimes \xi} \otimes \I_\cP$ is globally generated.
\item {\em Our bound (Theorem~\ref{thm:bound}).} Yes, one only needs a
  $\cP$--interpolating linear system on the bottom surface $X$.
\end{itemize}

\subsection{How to get good towers of surfaces?}
This is clearly a natural question since codes from towers of curves
are well--known to provide the best known sequences of codes. In the
case of curves, one can see three approaches in the literature:
\begin{itemize}
\item modular curves, see for instance \citeMain{TVZ};
\item recursive towers of function fields, see for instance \citeMain{GS};
\item class field towers, see for instance \citeMain{LZ11}.
\end{itemize}

In \S~\ref{sec:philippe}, we investigate a class field like approach towards
by providing a criterion for a surface to have an infinite tower of étale
$\ell$--covers that splits totally above a fixed finite set of points.

\section{Infinite \'etale towers of surfaces}
In \S~\ref{sec:philippe} to follow, we investigate a class--field like
approach for the existence of an infinite tower of \'etale covers of a
given surface $X$. In particular, we prove
Theorem~\ref{Critere_Phi}, a sufficient condition for the
existence of such a tower in which some finite set $T$ of points
splits totally. 

This criterion raises the following question: {\em
  which explicit surfaces may have an infinite tower of \'etale covers
  in which some non-empty fixed set of rational points splits
  totally?}
For this sake, we prove in \S~\ref{sec:classification} that one can
find such an example on a given surface $X$ if, and only if, one can
find an example on some of its relatively minimal models. Then, we
consider the Kodaira classification of minimal surfaces, explaining
why some cases can quickly be excluded.  In particular, note that for
a surface to have an infinite étale tower that splits completely at
some finite set of rational points, a necessary condition is that its
geometric étale fundamental group is infinite.

We conclude this Section with an example in \S~\ref{sec:example} of
surface for which Theorem~\ref{Critere_Phi} can be entirely worked
out.
 
\subsection{\'Etale covers of marked surfaces} \label{sec:philippe}
\subsubsection{The \'etale site of a marked scheme}

Throughout this section, let $X$ be a scheme 
  and $T$ a
finite set of closed points.  For a morphism $\phi : Y\to X,$ we denote
by $T_{Y}$ the set $\phi^{-1}(T).$

\medskip

\noindent \textbf{Caution.} 
Compared to the previous section where the finite
set of points on the surfaces was referred to as $\cP$, in this
section we denote it by $T$ to be coherent with the usual notation and to emphasize that what follows holds
for a finite set of closed points and not only a set of rational
points.

\bigskip

\noindent {\bf Basic defintions and properties.}
In this paragraph, we briefly recall the definition of the marked
\'etale site of a marked scheme which has been introduced by
A. Schmidt \citeMain{schmidt2} in the case of curves and later
generalized to general schemes (see \citeMain{schmidt3}). A marked
scheme is a pair $(Y,S)$ where $Y$ is a scheme and $S$ is a set of
points of $Y.$ A morphism of marked schemes $f:(Z,R)\to (Y,S)$ is a
morphism of schemes $f:Z\to Y$ such that $f(R)\subset
S.$

We consider the marked scheme $(X,T)$ and we denote by $(X,T)_{\textrm{\'et}}$ its marked \'etale site. It consists in the category of morphisms $\phi : (U,S)\to (X,T)$ such that $\phi:U\to X$ is \'etale and $S=T_{U}$ (called $T$-marked \'etale morphisms) together with the following coverings:  surjective families $(p_{i}:(U_{i},S_{i})\to (U,S))_{i}$ such that, for any $s\in S,$ there is a $i$ and a $u_{i}\in S_{i}$ such that $p_{i}(u_{i})=s$ and  the induced field homomorphism $\kappa(s)\to\kappa(u_{i})$ is an isomorphism.

We can easily check that these data define a site (\textit{i.e.} a Grothendieck topology in
\citeMain[Def. 1.1.1]{Art}).
Thus, the
general properties of Grothendieck topologies apply here.

Let $P(X,T)$ and $S(X,T)$ denote respectively the category of
preshaves and sheaves of abelian groups on $(X,T)_{{\textrm{\'et}}}.$ An
example of such a sheaf is given by
$\mathcal{O}_{(X,T)}(U)=\Gamma(U,\mathcal{O}_{U}).$
We denote by $i:S(X,T)\to P(X,T)$ the inclusion functor and by
$\#:P(X,T)\to S(X,T)$ its left adjoint (see \citeMain[\S2.1]{Art} for
details of the construction). It is a general fact for Grothendieck
topologies that $i$ is left exact and $\#$ is exact (see
\citeMain[Th. 2.1.4]{Art}).

As usual, if $\pi:Y\to X$ is a morphism
and $\CG$ is a presheaf on $(Y,T_{Y})_{{\textrm{\'et}}},$ we define a
presheaf $\pi_{\ast}\CG$ by putting
$\pi_{\ast}\CG(U)=\CG(U\times_{X}Y)$ for any $T$-marked \'etale
morphism $U\to X.$ It is clear that it satisfies the axiom of being a
sheaf as soon as $\CG $ is a sheaf, and that the induced functor
$\pi_{\ast}:S(Y,T_{Y})\to S(X,T)$ is left exact.

We can define (see \citeMain[Th. 1.3.1]{Art}) a left adjoint $\pi^*$ to
$\pi_{*}.$ Indeed, for a presheaf $\CG$ on $(X,T)_{\textrm{\'et}},$ we define
first a presheaf $\pi'(\CG)$ on $(Y,T_{Y})$ by putting, for any
$T_{Y}$-marked \'etale map $V\to Y$, $\pi'(\CG)(V)=\varinjlim \CG(U),$
where the direct limit is taken on the $T$-marked \'etale maps
$U\to X$ making the diagram
\[\xymatrix{V\ar[d]\ar[r] & U\ar[d]\\ Y\ar[r] & X}\] commute.
If $\CF$ is
a sheaf, we define
$\pi^*(\CF)=\#\pi'(i(\CF)).$

As in any Grothendieck topology, $S(X,T)$ has enough injectives (see
\citeMain[Misc. 2.1.1]{Art} or \citeMain[Th. 1.10.1]{Gro}). For a sheaf
$\mathcal{F}$ of abelian groups on $(X,T)_{{\textrm{\'et}}},$ the functor
$\CF\mapsto \CF(X,T)=:\Gamma(X,T,\CF)$ is left exact and we denote by
$\H^i(X,T,\mathcal{F})$ its associated cohomogy groups. One can also
define the cohomology groups with support in a closed subscheme $Z,$
as the right derivatives $\H^i_{Z}(X,T,\mathcal{F})$ of the left exact
functor $\mathcal{F}\mapsto \ker (\CF(X,T)\to \CF(U,T\cap U)),$ where
$U$ is $X\setminus Z.$

As well as for the \'etale site, we can prove excision (see \citeMain[\S2--3]{schmidt3}) 
and a limit result, which is the main tool to
compute marked \'etale cohomology groups. For $x\in T,$ put
$X_{x}^h= \Spec(\O^h_{X,x})$, where $\O^h_{X,x}$ denotes the
henselianization of the local ring $\O_{X,x}$.  Let us
quote here the results of \citeMain{schmidt3} we will use.

\begin{prop} Let $X$ be a  scheme and $T$ a finite
  set of closed points.  Then, for every sheaf of abelian groups $\CF$
  on $(X,T),$ there is a long exact sequence
  \[\dots\to \H^i_{T}(X,T,\CF)\to \H_{\textrm{\'et}}^i(X,T,\CF)\to
  \H^i_{\textrm{\'et}}(X\setminus T,\CF)\to\dots\]
and, for all $i\geq 0,$
\[\H_{T}^i(X,T,\CF)\simeq \bigoplus_{x\in T}\H^i_{\{x\}}(X^h_{x},x,\CF).\] 
\end{prop}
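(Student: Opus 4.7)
The plan is to follow the standard derived-functor and excision strategies from classical étale cohomology, adapted to the marked étale site. Since $S(X,T)$ has enough injectives (as quoted just above), both assertions will reduce to computations with an injective resolution of $\CF$ once I identify the correct short exact sequence of functors and prove a suitable continuity property along the henselianisation.

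For the long exact sequence, I consider the short exact sequence of left exact functors on $S(X,T)$,
\[
0 \to \Gamma_T(X,T,-) \to \Gamma(X,T,-) \to \Gamma(X\setminus T,-),
\]
where $\Gamma_T$ is the kernel functor defining cohomology with support, and where I use that on $U=X\setminus T$ the condition $T\cap U=\emptyset$ makes the marked étale site $(U,\emptyset)_{\textrm{\'et}}$ coincide with the ordinary étale site on $U$. The key technical point is that the restriction functor $\CF\mapsto \CF|_U$ sends injectives to acyclics (this follows from the general formalism: it is right adjoint to the extension by zero along the open immersion of sites, so it is exact and preserves injectives). Choosing an injective resolution $\CF \to \mathcal{I}^\bullet$ in $S(X,T)$ and applying the three functors yields a short exact sequence of complexes, and the associated long exact sequence in cohomology gives exactly the stated one.

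For the excision part I would split the argument in two. First, since $T$ is a \emph{finite} set of closed points, one can, after marking, separate the elements of $T$ by pairwise disjoint open neighbourhoods; sections supported on a finite disjoint union of closed subsets decompose as a direct sum, and the decomposition passes to the derived functors since $S(X,T)$ is abelian. This gives
\[
\H^i_T(X,T,\CF) \simeq \bigoplus_{x\in T} \H^i_{\{x\}}(X,T,\CF).
\]
Second, for a single $x\in T$, I would invoke Schmidt's continuity result: $\hX = \Spec(\O_{X,x}^h)$ is the cofiltered inverse limit of $T$-marked étale neighbourhoods $(U,\{x\}) \to (X,T)$ of $x$ with affine transition maps, and cohomology with support at $\{x\}$ commutes with such limits, yielding $\H^i_{\{x\}}(X,T,\CF) \simeq \H^i_{\{x\}}(\hX,x,\CF)$.

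The main obstacle is verifying that the two classical properties used above—acyclicity of restriction along the open complement, and continuity of cohomology with support under the cofiltered limit defining the henselianisation—still hold on the marked étale site, where the covering condition at marked points is strictly stronger than in the usual étale topology. Both are precisely the technical contents worked out by Schmidt \citeMain{schmidt3} (and in the appendix mentioned in the introduction), so once I can cite those, the derived-functor formalism and a Mayer--Vietoris/disjoint-support argument assemble into the proof essentially without further obstruction.
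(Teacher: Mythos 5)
Your outline coincides with the paper's treatment: the paper does not prove this proposition but explicitly quotes it from Schmidt's work on the marked \'etale site (the reference \emph{schmidt3}, \S 2--5), which is exactly where the two technical inputs you isolate --- the localization formalism for the open complement (existence of $j_!$, hence preservation of injectives under restriction) and excision plus the limit argument along the henselianization --- are established. Your derived-functor skeleton (short exact sequence of left exact functors, decomposition of supports over the finite set $T$, passage to $X_x^h$ by continuity) is the standard one and is correct, so there is nothing to add beyond the citation the paper itself relies on.
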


Following Schmidt (see 
\citeMain[\S 5]{schmidt3}), one can define, for a noetherian, normal
and connected scheme $X$ a fundamental group $\pi_{1} (X,T)$ which is
profinite and classifies \'etale covers of $X$ where the points of $T$
split completely. Be cautious that fundamental groups are defined in
\citeMain{schmidt3} for more general schemes and that what we denote
here by $\pi_{1}(X,T)$ for convenience is denoted by
$\hat{\pi}_{1}^{\text{et}}(X,T)$ in 
\citeMain[\S 5]{schmidt3}(in our setting, there is no possible
confusion, thanks to \citeMain[Proposition 5.1]{schmidt3}).

We denote by $\widetilde{(X,T)}(\ell)$ the universal pro-$\ell$-cover
of $(X,T).$ The projection $\widetilde{(X,T)}(\ell)\to X$ is Galois
with Galois group the maximal pro-$\ell$-quotient $\pi_{1}(X,T)(\ell)$
of $\pi_{1}(X,T).$ Finally, the Hochschild-Serre spectral sequence
relates Galois cohomology and \'etale cohomology groups (see
\citeMain[\S 4--5]{schmidt3}). Let us also recall it here.

\begin{prop} Let $X$ be a connected noetherian scheme, $T$ a finite
  set of closed points of $X.$ Let $M$ be a discrete $\ell$-torsion
  $\pi_{1}(X,T)(\ell)$-module. Then we have the following spectral
  sequence:
\[E_2^{pq}=\H^p(\pi_{1}(X,T)(\ell),
\H^q(\widetilde{(X,T)}(\ell),T,M))\Rightarrow \H^{p+q}(X,T,M).\]
\end{prop}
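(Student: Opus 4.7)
The plan is to derive this as an instance of Grothendieck's spectral sequence for a composition of functors, mimicking the usual Hochschild--Serre argument in \'etale cohomology. Set $G := \pi_{1}(X,T)(\ell)$ and write $\pi : \widetilde{(X,T)}(\ell) \to (X,T)$ for the universal pro-$\ell$-cover, which I view as the projective limit of the finite $T$-marked \'etale Galois $\ell$-subcovers $(X_\alpha,T_\alpha) \to (X,T)$ with Galois groups $G_\alpha = G/U_\alpha$.

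The first step is to realise $\Gamma(X,T,-)$ as a composition. For each $\alpha$, pullback defines a left exact functor $\pi_\alpha^* : S(X,T) \to S(X_\alpha,T_\alpha)^{G_\alpha}$ valued in $G_\alpha$-equivariant sheaves, and the Galois correspondence for the marked \'etale site (as developed in \citeMain[\S 5]{schmidt3}) yields the factorisation
\[
\Gamma(X,T,-) = (-)^{G_\alpha} \circ \Gamma(X_\alpha,T_\alpha,-) \circ \pi_\alpha^*.
\]
Passing to the filtered colimit over $\alpha$ one obtains
\[
\Gamma(X,T,M) = \Gamma(\widetilde{(X,T)}(\ell),T,M)^{G},
\]
where $\Gamma(\widetilde{(X,T)}(\ell),T,M) := \varinjlim_\alpha \Gamma(X_\alpha,T_\alpha,\pi_\alpha^* M)$ is endowed with its natural continuous $G$-action; the hypothesis that $M$ is discrete and $\ell$-torsion is precisely what is needed for this colimit to be compatible with continuous cohomology of the profinite group $G$.

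The second step is acyclicity. I must check that $\Gamma(\widetilde{(X,T)}(\ell),T,-)$ sends injective objects of $S(X,T)$ (whose existence was recalled above) to $G$-acyclic discrete modules. For each finite quotient $G_\alpha$, injective sheaves on $(X,T)$ restrict on the Galois cover to sheaves whose global sections are induced, hence $G_\alpha$-acyclic modules; filtered colimits of acyclic discrete modules remain acyclic for continuous cohomology, so the limit is $G$-acyclic. Grothendieck's composition-of-functors spectral sequence then produces
\[
E_2^{pq} = \H^p(G, R^q\Gamma(\widetilde{(X,T)}(\ell),T,M)) \Rightarrow \H^{p+q}(X,T,M),
\]
and the identification $R^q\Gamma(\widetilde{(X,T)}(\ell),T,M) = \H^q(\widetilde{(X,T)}(\ell),T,M)$ is exactly the definition of marked \'etale cohomology of the universal cover.

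The main obstacle is the pro-finite passage to the limit: one must verify that the marked \'etale cohomology of the pro-\'etale universal $\ell$-cover carries a continuous $G$-action whose invariants recover the marked \'etale global sections on $(X,T)$, and that this compatibility survives at higher degrees. For the classical \'etale site this is standard Artin--Mazur material, but in the marked setting one needs the excision and limit results collected earlier in the section to control what happens at the marked points $T$. The detailed verification is precisely what is carried out in Schmidt's work \citeMain[\S 4--5]{schmidt3} (extending the curve case of \citeMain{schmidt2}), which I would invoke to finish.
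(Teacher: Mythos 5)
Your outline is sound, and it matches the paper's treatment: the paper does not prove this proposition at all but simply recalls it from Schmidt's appendix (\S 4--5 of the cited work), which is exactly the reference you defer to for the delicate points (the Galois correspondence for the marked site, preservation of injectives under restriction to marked covers, and the limit theorem for the pro-$\ell$ universal cover). Your Grothendieck composite-functor derivation is the standard argument underlying that reference, so the proposal is correct and consistent with the paper.
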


It induces a five term exact sequence:
\begin{align*}0\to \H^1(\pi_{1}(X,T)(\ell),M)\to &\H^1_{\textrm{\'et}}(X,T,M)\to
  \H^1(\widetilde{(X,T)}(\ell),M)^{\pi_{1}(X,T)(\ell)} \to \\ &\to
  \H^2(\pi_{1}(X,T)(\ell),M)\to \H^2_{\textrm{\'et}}(X,T,M) .
\end{align*}
As $\H^1(\widetilde{(X,T)}(\ell),M)=0,$ because
$\widetilde{(X,T)}(\ell)$ admits no non-trivial $\ell$-cover, we see
that we have the following isomorphisms. 
\begin{cor} \label{HS} In the setting of the proposition, the
  Hochschild-Serre spectral sequence induces isomorphisms
\[\H^i(\pi_{1}(X,T)(\ell),M)\simeq \H^i_{\textrm{\'et}}(X,T,M) \text{ for } i=0,1\] and
an injection
\[\H^2(\pi_{1}(X,T)(\ell),M)\hookrightarrow
\H^2_{\textrm{\'et}}(X,T,M).\]
\end{cor}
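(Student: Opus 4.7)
The plan is to extract Corollary \ref{HS} directly from the five-term exact sequence displayed immediately before its statement, together with the vanishing $\H^1(\widetilde{(X,T)}(\ell),M)=0$ already justified in that paragraph from the universality of $\widetilde{(X,T)}(\ell)$ as a pro-$\ell$ cover. Since the proposition furnishing the Hochschild–Serre spectral sequence is at our disposal, no direct manipulation of its pages is needed; the whole content of the corollary is an unwinding of low-degree terms.

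First I would treat the case $i=0$. Here the $E_2$-page reads $E_2^{00}=\H^0(\pi_{1}(X,T)(\ell),\H^0(\widetilde{(X,T)}(\ell),T,M))$, and no differential affects this position in the degrees contributing to $\H^0(X,T,M)$. It remains to identify $\H^0(\widetilde{(X,T)}(\ell),T,M)$ with $M$ itself, which follows from the fact that $M$ is viewed as the constant sheaf on the connected cover $\widetilde{(X,T)}(\ell)$; the Galois action then identifies $M^{\pi_{1}(X,T)(\ell)}$ with $\H^0(X,T,M)$.

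For $i=1$ and for the injection at $i=2$, I would simply read off the five-term sequence:
\[
0\to \H^1(\pi_{1}(X,T)(\ell),M)\to \H^1_{\textrm{\'et}}(X,T,M)\to \H^1(\widetilde{(X,T)}(\ell),M)^{\pi_{1}(X,T)(\ell)}\to \H^2(\pi_{1}(X,T)(\ell),M)\to \H^2_{\textrm{\'et}}(X,T,M).
\]
Injecting the vanishing $\H^1(\widetilde{(X,T)}(\ell),M)=0$ kills the middle term, so the left map becomes an isomorphism (giving the $i=1$ case) and the connecting map $\H^2(\pi_{1}(X,T)(\ell),M)\to \H^2_{\textrm{\'et}}(X,T,M)$ acquires trivial kernel, which is the desired injection.

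There is essentially no hard step: the only subtle point is that the vanishing $\H^1(\widetilde{(X,T)}(\ell),M)=0$ really requires that $M$ be $\ell$-torsion, because $\widetilde{(X,T)}(\ell)$ is only the \emph{pro-$\ell$} universal cover and hence only annihilates $\ell$-power covers. This hypothesis is part of the statement, so no additional work is required. Hence the proof reduces to quoting the spectral sequence, invoking the vanishing, and reading off the five-term exact sequence.
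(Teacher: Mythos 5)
Your argument is exactly the one the paper uses: the corollary is read off from the five-term exact sequence of the Hochschild--Serre spectral sequence together with the vanishing $\H^1(\widetilde{(X,T)}(\ell),M)=0$, which the paper justifies in the same way (the universal pro-$\ell$-cover admits no non-trivial $\ell$-cover). Your additional remarks on the $i=0$ case and on the necessity of $M$ being $\ell$-torsion are correct and merely make explicit what the paper leaves implicit.
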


\subsubsection{The cohomology of marked surfaces}
Let $X$ be a $2$-dimensional noetherian regular scheme defined over
$\F_{q}$ and let $T$ be a finite set of closed points. Let
$\Lambda=\F_{\ell},$ where $\ell\nmid q$ is prime number.  The aim of
this paragraph is to computate the \'etale cohomology groups
$\H^i(X,T,\Lambda)$ of the marked surface $(X,T).$ Notice that this can
be immediately generalized to higher dimensions.

\bigskip

\noindent {\bf Local computations.}
Let $\hX$ denote the spectrum $\spec ({\mathcal{O}_{X,x}^h})$ of the
henselisation of the local ring $\mathcal{O}_{X,x}$ of $X$ at a point
$x\in T$ with finite residue field $k.$ The scheme $\hX$ is henselian
with closed point $x,$ and therefore
\[\H^i_{\textrm{\'et}}(\hX,\Lambda)=
\H^i_{\textrm{\'et}}(x,\Lambda)=\H^i(k,\Lambda)\simeq\begin{cases}
\Lambda &\text{ for } i=0,1\\
0 &\text{ for } i\geq 2.\end{cases}\]

Let $\hU=\hX-x.$ We wish to understand the groups 
\[\H^{i}_{x}(\hX,\Lambda)\text{ and } \H^{i}_{x}(\hX,x,\Lambda).\]
The first groups arise in the sequence 
\[\cdots \to \H^{i}_{x}(\hX,\Lambda)\to \H^{i}(\hX,\Lambda)\to
\H^{i}(\hU,\Lambda)\to \cdots.\]
They are computed thanks to Gabber purity theorem \citeMain{fujiwara}: 
\[\H_{x}^i(\hX,\Lambda)\simeq \H^{i-4}(x,\Lambda(-2))\] as $x$ is regular of
codimension $2$ in the regular affine scheme $\hX.$ Then, they are $0$
for $i\neq 4,5$ and
$\H_{x}^4(\hX,\Lambda)\simeq \Hom(\mu_{\ell}(k)\otimes\mu_{\ell}(k),
\Lambda),$ $\H_{x}^5(\hX,\Lambda)\simeq \H^{1}(k,\Lambda(-2))$ this
latter having the same cardinality as
$\Hom(\mu_{\ell}(k)\otimes\mu_{\ell}(k),\Lambda)$ (thus
$\Lambda$-vector spaces of dimension $1$ for $i=4,5$ if $\ell$ divides
$q-1$ and trivial otherwise).

We deduce that $\H^1(\hU,\Lambda)\simeq \H^1(\hX,\Lambda),$ that
\[ \H^i(\hU,\Lambda)\simeq \H_{x}^{i+1}(\hX,\Lambda)\text{ for } i=3,4\]
and that $\H^i(\hU,\Lambda)=0$ for $i=2$ or $i\geq 5.$

The second groups appear in
\[\cdots \to \H^{i}_{x}(\hX,x,\Lambda)\to \H^{i}(\hX,x,\Lambda)\to
\H^{i}(\hU,\Lambda)\to \cdots.\] 

Since the identity of $(\hX,x)$ is cofinal among the covering families
of $(\hX,x),$ we have $\H^i(\hX,x,\Lambda)=0$ for $i\geq 1.$ Indeed, if
$f: Y\to \hX$ is a finite \'etale morphism marked at $x,$ then it
admits a section (see \citeMain[Th. I.4.2.]{MilEC}) inducing an
isomorphism between $\hX$ and a connected component of $Y$ (as $f$ is
separated). If $Y$ is connected then the map $f$ is an
isomorphism.

It implies
that
$\H^{1}_{x}(\hX,x,\Lambda)=\ker (\H^1(\hX,x,\Lambda)\to
\H^1(U_{x},\Lambda))=0$
and that $\H^{i}_{x}(\hX,x,\Lambda)\simeq \H^{i-1}(\hU,\Lambda)$ for any
$i\geq 2.$ Therefore, we have shown the following:
\begin{prop} Let $T$ be $\{x\}$ or $\emptyset.$ The groups
  $\H^i_{x}(\hX,T,\Lambda)$ are trivial for $i\leq 1,$ $i=3$ or
  $i\geq 6.$ Moreover, we have:
 \[\dim \H^i_{x}(\hX,T,\Lambda)=\begin{cases} \sharp T &\text{ if } i=2\\
    1 &\text{ if } i=4,5 \text{ and } \ell \mid q-1\\
    0 & otherwise
 \end{cases}\]
\end{prop}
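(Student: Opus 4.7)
The plan is to reduce the proposition to a bookkeeping exercise that combines the two local computations carried out in the preceding paragraphs, applied separately to $T=\emptyset$ and $T=\{x\}$, and then to observe that the stated formula uniformly interpolates between them.

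For the unmarked case $T=\emptyset$, I would simply invoke Gabber's absolute purity, which was already quoted: it yields an isomorphism $\H^i_{x}(\hX,\Lambda)\simeq \H^{i-4}(x,\Lambda(-2))$. Since $x=\Spec(k)$ with $k$ finite, this vanishes outside $i=4,5$, and in those two degrees its dimension is that of $\Hom(\mu_\ell(k)^{\otimes 2},\Lambda)$, namely $1$ when $\ell\mid q-1$ and $0$ otherwise. As $\sharp T=0$ in this case, all trivialities claimed (including at $i=2$) are immediate.

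For the marked case $T=\{x\}$, the key input is the acyclicity $\H^i(\hX,x,\Lambda)=0$ for $i\geq 1$ obtained above via cofinality of the identity covering of $(\hX,x)$. Plugging this into the long exact sequence
\[
\cdots \to \H^i_{x}(\hX,x,\Lambda) \to \H^i(\hX,x,\Lambda) \to \H^i(\hU,\Lambda) \to \cdots
\]
forces $\H^i_{x}(\hX,x,\Lambda)=0$ for $i\leq 1$ and produces an isomorphism $\H^i_{x}(\hX,x,\Lambda)\simeq \H^{i-1}(\hU,\Lambda)$ for $i\geq 2$. It then remains only to insert the description of $\H^\bullet(\hU,\Lambda)$ already established: $\H^0(\hU,\Lambda)=\H^1(\hU,\Lambda)=\Lambda$, $\H^2(\hU,\Lambda)=0$, $\H^i(\hU,\Lambda)\simeq \H^{i+1}_{x}(\hX,\Lambda)$ for $i=3,4$, and $\H^i(\hU,\Lambda)=0$ for $i\geq 5$. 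Reading this off degree by degree gives $\dim \H^2_{x}(\hX,x,\Lambda)=1=\sharp T$, $\H^3_{x}=0$, the groups $\H^4_{x}$ and $\H^5_{x}$ inheriting the dimensions of the unmarked case (hence $1$ exactly when $\ell\mid q-1$), and $\H^{\geq 6}_{x}=0$.

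The main obstacle is the shift in indices between the two cases: the unmarked groups contribute non-trivially at $i=4,5$ directly via purity, whereas the marked groups pick up the $i=2$ contribution from $\H^1(\hU,\Lambda)$ and recover the $i=4,5$ values only after the double shift through $\H^\bullet(\hU,\Lambda)$. No substantive input beyond Gabber purity and the cofinality vanishing is required; the content of the proposition is precisely to package the two prior computations into one uniform statement.
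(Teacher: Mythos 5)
Your proposal is correct and follows exactly the paper's own route: Gabber purity handles the unmarked case, and for $T=\{x\}$ the acyclicity $\H^i(\hX,x,\Lambda)=0$ for $i\geq 1$ (from cofinality of the identity covering) turns the long exact sequence of the pair into the isomorphisms $\H^i_x(\hX,x,\Lambda)\simeq \H^{i-1}(\hU,\Lambda)$ for $i\geq 2$, after which one reads off the dimensions from the already-computed cohomology of $\hU$. The paper presents the proposition as the summary of precisely this computation, so there is nothing to add.
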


\bigskip

\noindent {\bf Global computations.}
Let us now use excision and the local computations in order to get
information on the cohomology groups. Put $h^i(X,\dots)$ for the
dimension $\dim_{\Lambda}\H^i(X,\dots).$ As soon as it is well defined, put $\displaystyle \chi(X,\dots):=\sum_{i\geq 0}(-1)^ih^i(X,\dots).$

\begin{prop} Let $X$ be a $2$-dimensional noetherian regular scheme
  defined over $\F_{q}$ and let $T$ be a finite set of closed
  points. Let $\Lambda=\F_{\ell}.$
  \[h^2(X,T,\Lambda)-h^1(X,T,\Lambda)=\sharp T+h^2(X,\Lambda)-
  h^1(X,\Lambda)=\dim \H^2(\Xbar,\Lambda)^{G_{\F_q}}+\sharp T-1,\]
and
\[h^5(X,T,\Lambda)-h^4(X,T,\Lambda)+h^3(X,T,\Lambda)=
h^5(X,\Lambda)-h^4(X,\Lambda)+h^3(X,\Lambda),\]
where $G_{\F_q}=\mathrm{Gal}(\Fqbar, \F_q)$ and
$\Xbar=X\times_{\F_q} \Fqbar$.
\end{prop}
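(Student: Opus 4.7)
The strategy is to combine the two long exact sequences of relative \'etale cohomology provided by the earlier proposition: for the marked pair $(X,T)$,
\[
\cdots\to \H^i_T(X,T,\Lambda)\to \H^i(X,T,\Lambda)\to \H^i(X\setminus T,\Lambda)\to\cdots,
\]
and, for the unmarked setting (with empty mark set), the analogous one
\[
\cdots\to \H^i_T(X,\Lambda)\to \H^i(X,\Lambda)\to \H^i(X\setminus T,\Lambda)\to\cdots.
\]
Both sequences share the middle term $\H^i(X\setminus T,\Lambda)$, so by taking differences I can eliminate it and derive relations purely between $\H^*(X,T,\Lambda)$ and $\H^*(X,\Lambda)$, with a controlled discrepancy coming from the local cohomology groups. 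Their dimensions are computed in the previous proposition: $\H^i_{T}(X,T,\Lambda)$ has dimension $\sharp T$ for $i=2$, dimension $\sharp T$ for $i=4,5$ when $\ell\mid q-1$ and vanishes otherwise, while $\H^i_T(X,\Lambda)$ agrees with the marked version for $i=4,5$ and vanishes for $i\leq 3$.

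For the first identity I would truncate the two sequences between degrees $0$ and $3$. Since $\H^i_T(X,T,\Lambda)=0$ for $i=0,1,3$ and $\H^i_T(X,\Lambda)=0$ for $i\leq 3$, the marked sequence reduces to
\[
0\to \H^1(X,T,\Lambda)\to \H^1(X\setminus T,\Lambda)\to \H^2_T(X,T,\Lambda)\to \H^2(X,T,\Lambda)\to \H^2(X\setminus T,\Lambda)\to 0,
\]
while in the unmarked setting $\H^i(X,\Lambda)\simeq \H^i(X\setminus T,\Lambda)$ for $i=0,1,2$. Taking the alternating sum of dimensions in the first sequence and substituting the unmarked isomorphism yields exactly
\[
h^2(X,T,\Lambda)-h^1(X,T,\Lambda)=h^2(X,\Lambda)-h^1(X,\Lambda)+\sharp T,
\]
which is the left equality. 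To obtain the right equality, I would apply Hochschild-Serre to the Galois cover $\Xbar\to X$ with group $G_{\F_q}\simeq \hat{\Z}$. Since $\hat{\Z}$ has $\ell$-cohomological dimension one, the spectral sequence degenerates at $E_2$ into short exact sequences
\[
0\to \H^1(G_{\F_q},\H^{i-1}(\Xbar,\Lambda))\to \H^i(X,\Lambda)\to \H^0(G_{\F_q},\H^i(\Xbar,\Lambda))\to 0.
\]
For any finite $\hat{\Z}$-module $M$, invariants and coinvariants have the same dimension, so $\dim \H^1(G_{\F_q},\H^1(\Xbar,\Lambda))=\dim \H^1(\Xbar,\Lambda)^{G_{\F_q}}$; together with $\H^0(\Xbar,\Lambda)=\Lambda$ with trivial action (using that $\Xbar$ is connected), taking dimensions in the SES for $i=1,2$ and subtracting gives $h^2(X,\Lambda)-h^1(X,\Lambda)=\dim \H^2(\Xbar,\Lambda)^{G_{\F_q}}-1$, completing the first equation.

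For the second identity I would apply the same double-sequence trick in the range $3\leq i\leq 5$. Using $\H^3_T=0$ in both settings and $\H^6_T=0$ (from the vanishing for $i\geq 6$ in the local proposition), both long exact sequences become
\[
0\to \H^3(X,*)\to \H^3(X\setminus T,\Lambda)\to \H^4_T(X,*)\to \H^4(X,*)\to \H^4(X\setminus T,\Lambda)\to \H^5_T(X,*)\to \H^5(X,*)\to \H^5(X\setminus T,\Lambda)\to 0,
\]
where $*$ stands for either $T$ or the empty mark. Taking the alternating sum of dimensions in each and subtracting, the terms involving $\H^i(X\setminus T,\Lambda)$ cancel, and because the local proposition gives $\dim \H^i_T(X,T,\Lambda)=\dim \H^i_T(X,\Lambda)$ for $i=4,5$, the discrepancy vanishes, leaving
\[
h^5(X,T,\Lambda)-h^4(X,T,\Lambda)+h^3(X,T,\Lambda)=h^5(X,\Lambda)-h^4(X,\Lambda)+h^3(X,\Lambda).
\]
The main technical point is ensuring that both truncated sequences genuinely begin and end by zeros, which rests precisely on the vanishing statements for $\H^i_{x}(\hX,T,\Lambda)$ in low ($i\leq 1$, $i=3$) and high ($i\geq 6$) degrees from the local computation.
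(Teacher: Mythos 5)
Your proposal is correct and follows essentially the same route as the paper: the two excision sequences (marked and unmarked) sharing the terms $\H^i(X\setminus T,\Lambda)$, the local vanishing/dimension computations to truncate them and compare alternating sums, and the Hochschild--Serre sequence together with $\dim\H^0(G_{\F_q},M)=\dim\H^1(G_{\F_q},M)$ for the second equality. Your write-up is in fact slightly more explicit than the paper's about where the truncated sequences begin and end with zeros.
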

\begin{proof}
  We use excision to compute the $\H^i(X,T,\Lambda).$ It provides exact
  sequences
  \[\cdots \to \bigoplus_{x\in T} \H_{x}^i(\hX,\Lambda)\to \H^i(X,\Lambda)
  \to \H^i(X-T,\Lambda)\to \cdots \text{ and }\]
  \[\cdots \to \bigoplus_{x\in T} \H_{x}^i(\hX,T_{x},\Lambda)\to
  \H^i(X,T,\Lambda)
  \to \H^i(X-T,\Lambda)\to \cdots.\] The first sequence leads to
  isomorphisms $\H^i(X,\Lambda) \simeq \H^i(X-T,\Lambda)$ for $i=1,2,$
  and from the second one can deduce the first equality of the
  proposition by the local computations.

Moreover, the Hochschild-Serre spectral sequence leads to the
following, for any $i\geq 1:$
\[0\to \H^1(G_{\F_q},\H^{i-1}(\Xbar,\Lambda))\to \H^i(X,\Lambda)\to
\H^i(\Xbar,\Lambda)^{G_{\F_q}}\to 0.\] 
Together with the fact that for any finite module $M,$
$\H^0(G_{\F_{q}},M)$ and $\H^1(G_{\F_{q}},M)$ have same cardinality, we
obtain
\[h^2(X,\Lambda)-h^1(X,\Lambda)=\dim
\H^2(\Xbar,\Lambda)^{G_{\F_q}}-1,\]
which leads to
\[h^2(X,T,\Lambda)-h^1(X,T,\Lambda)=\dim
\H^2(\Xbar,\Lambda)^{G_{\F_q}}+\sharp T-1.\]
\end{proof}

The second part of the theorem comes from the isomorphisms
$\H_{x}^i(\hX,{x},\Lambda)\simeq \H_{x}^i(\hX,\Lambda)$ for $i\geq 3,$
and their triviality for $i=3$ and $6.$ Note that if $\ell\nmid q-1,$
then the exact sequences imply that
$\H^i(X,T,\Lambda)\simeq \H_{\textrm{\'et}}^i(X,\Lambda)$ for $i\geq 3.$

\begin{cor} Let $X$ be a smooth projective surface defined over
  $\F_{q}.$ Let $T$ be a finite set of closed points of $X.$ Then, the
  cohomology groups $\H^i(X,T,\F_{\ell})$ vanish for $i\geq 6,$ and we
  have $\chi(X,T,\F_{\ell})=\sharp T.$
\end{cor}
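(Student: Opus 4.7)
The plan is to combine the excision sequence stated in the first proposition of the section, the local computation already carried out, and a standard vanishing of the Euler characteristic of a smooth projective surface over $\F_q$. Both halves of the corollary follow quickly once these ingredients are put in place.

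For the vanishing in degrees $i \geq 6$, I would apply the long exact excision sequence
\[
\cdots \to \bigoplus_{x \in T} \H^i_{\{x\}}(\hX, x, \Lambda) \to \H^i(X, T, \Lambda) \to \H^i(X \setminus T, \Lambda) \to \cdots .
\]
The local terms vanish for $i \geq 6$ by the local proposition. For the open term, $X \setminus T$ is a smooth quasi-projective surface over $\F_q$, so $\H^i(\overline{X \setminus T}, \Lambda) = 0$ for $i \geq 5$ by the étale cohomological dimension bound for varieties of dimension $2$, and then the Hochschild-Serre spectral sequence for $G_{\F_q}$ (whose cohomological dimension is $1$) pushes this to $\H^i(X \setminus T, \Lambda) = 0$ for $i \geq 6$. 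The exactness of the excision sequence forces $\H^i(X, T, \Lambda) = 0$ for $i \geq 6$.

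For the Euler characteristic, I note first that $\H^0(X, T, \Lambda) = \Lambda$ (since $X$ is geometrically connected, and the local $\H^0_{\{x\}}$ and $\H^1_{\{x\}}$ vanish so the excision edge map $\H^0(X, T, \Lambda) \to \H^0(X \setminus T, \Lambda) = \Lambda$ is an isomorphism). Assembling this with the two identities of the preceding proposition,
\[
h^2(X,T,\Lambda) - h^1(X,T,\Lambda) = \sharp T + h^2(X,\Lambda) - h^1(X,\Lambda),
\]
\[
h^5(X,T,\Lambda) - h^4(X,T,\Lambda) + h^3(X,T,\Lambda) = h^5(X,\Lambda) - h^4(X,\Lambda) + h^3(X,\Lambda),
\]
and the vanishing in degrees $\geq 6$ just proved, a straightforward alternating sum yields
\[
\chi(X, T, \Lambda) \;=\; \sharp T \;+\; \chi(X, \Lambda).
\]
It then remains to verify $\chi(X, \Lambda) = 0$. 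This is obtained exactly as in the proof of the preceding proposition: Hochschild-Serre gives short exact sequences implying
\[
h^i(X, \Lambda) = \dim \H^i(\Xbar, \Lambda)^{G_{\F_q}} + \dim \H^{i-1}(\Xbar, \Lambda)_{G_{\F_q}},
\]
and for any finite $G_{\F_q}$-module $M$ one has $|M^{G_{\F_q}}| = |M_{G_{\F_q}}|$ (Herbrand quotient of the procyclic group generated by Frobenius), so the alternating sum telescopes to zero.

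There is no real obstacle here: the argument is essentially a bookkeeping exercise on top of results already established in the section, the only mild technical point being the cohomological dimension bound for the open surface $X \setminus T$ and the careful tracking of signs when combining the two identities of the preceding proposition into the clean formula $\chi(X, T, \Lambda) - \chi(X, \Lambda) = \sharp T$.
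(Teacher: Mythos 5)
Your proof is correct and follows essentially the same route as the paper: excision plus the local vanishing for the degrees $i\geq 6$, and the two identities of the preceding proposition combined with $\chi(X,\Lambda)=0$ for the Euler characteristic. The only cosmetic difference is that you establish $\chi(X,\Lambda)=0$ by the Frobenius invariants/coinvariants telescoping argument where the paper invokes Poincar\'e duality, and you explicitly supply the (implicitly needed) identification $h^0(X,T,\Lambda)=h^0(X,\Lambda)$.
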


\begin{proof} Indeed, we have for $i\geq 6,$
  $\H^i(X,T,\F_{\ell})\simeq \H^i(X-T,\F_{\ell})\simeq
  \H^i(X,\F_{\ell})=0,$ as the local cohomology groups vanish, as $X$
  is smooth. Moreover, since $X$ is projective, $\chi(X,\F_{\ell})$
  vanishes by the Poincar\'e duality, which gives the desired result
  by the proposition.
\end{proof}

\bigskip

\noindent {\bf Choice of the closed points.}
 
Let $X$ be a smooth projective (absolutely irreducible) surface
defined over $\F_{q}$ and $T$ be a finite set of closed points
containing at least a point $x_{0}.$ Class field theory (see
\citeMain[VI.\S4--5]{ser} or \citeMain{Sam}) for a concise exposition) defines
a reciprocity map $\rho$ from the Chow group $\CH_{0}(X)$ of
zero-cycles of $X$ to $\pi^{ab}_{1}(X),$ by sending closed points $x$
to their Frobenius (the image in $\pi_{1}(X)^{ab}$ of the Frobenius of
$Gal(\overline{\kappa(x)}/\kappa(x))^{ab}).$ Its image is exactly the
set $\pi^{ab,f}_{1}(X)$ of elements of $\pi^{ab}_{1}(X)$ whose image
in $Gal(\overline{\F_{q}}/\F_{q})$ by the natural map is an element of
$\ZZ$ (in $\hat{\ZZ}).$ By \citeMain[VI.n$^\circ$ 16 Th\'eor\`eme 1]{ser}), we
have the following isomorphism of short exact sequences:

\medskip

\noindent \centerline{ \xymatrix{\relax
    0 \ar[r]& CH_{0}^0(X)\ar[r] \ar[d]_\rho^{\simeq}\commutatif & CH_{0}(X) \ar[r]^-{deg}  \ar[d]^\rho_{\simeq}\commutatif&\ZZ\ar[r]\ar[d]^{=} &0 \\
    0 \ar[r]&  \pi_{1}^{ab,0}(X)\ar[r] & \pi^{ab,f}_{1}(X) \ar[r]&\ZZ\ar[r] &0 \\
  } }

\medskip

The map $\rho$ induces an isomorphism from $\CH_0(X)/(x_{0})$ to
$\pi_{1}(X)^{ab}/(\rho(x_{0})).$ Then the group
$\pi_{1}(X)^{ab}/(\rho(x_{0}))$ is the finite quotient of
$\pi_{1}(X)^{ab}$ corresponding to the maximal \'etale abelian cover of $X,$
totally split at $x_{0}$ (see \citeMain[VI.\S5]{ser} : a finite \'etale
cover is totally split at $x_{0}$ if and only if its associated
Frobenius is trivial).  Therefore, we have an isomorphism
$\CH_0(X)/(\ell,x_{0})\simeq \pi_{1}(X,\{x_{0}\})^{ab}/\ell.$

Similarly, adding one more
point $x$ to $T$ lowers $h^1$ by one or enlarge $h^2$ by one,
depending on the fact that $x$ belongs to the group generated by the
previous split points in $\CH_{0}(X)/\ell$ or not.

Therefore, we have $h^1(X,T,\Lambda)=h^1(X)-r_{T},$ where $r_{T}$ is
the $\F_{\ell}$-dimension of the space generated by the points of $T$ in
$\CH_{0}(X)/\ell.$ If $T\neq \emptyset,$ $1\leq r_{T}\leq \sharp T.$

\subsubsection{Golod-Shafarevich criterion} 

Our strategy to produce infinite covers of $X$ with splitting
properties is the Golod-Shafarevich criterion. For this purpose, we
need the following inequality:
\[h^2(\pi_{1}(X,T)(\ell),\ZZ/\ell\ZZ)\leq \frac{h^1(\pi_{1}(X,T)(\ell),
  \ZZ/\ell\ZZ)^2}{4}\cdot\]
Because of the previous comparison maps (see Corollary \ref{HS}), it
is sufficient to check that it is satisfied for the \'etale cohomology
groups of the marked surface $(X,T).$

\begin{thm}\label{Critere_Phi}
  If
  \[\ds\dim \H^1(\Xbar,\Lambda)^{G_{\F_q}}\geq
    r_T+1+2\sqrt{\dim \H^2(\Xbar,\Lambda)^{G_{\F_q}}+\sharp T},
  \]
  then $\pi_{1}(X,T)(\ell)$ is infinite, i.e. $X$ admits a $T-\ell$
  infinite classfield tower.
\end{thm}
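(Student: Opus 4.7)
The plan is to combine the classical Golod--Shafarevich criterion for pro-$\ell$ groups with the cohomological machinery developed earlier in the section.

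First, I would recall the Golod--Shafarevich criterion: any finite pro-$\ell$ group $G$ satisfies $h^2(G,\F_\ell) > h^1(G,\F_\ell)^2/4$. Consequently, to prove that $\pi_1(X,T)(\ell)$ is infinite it is enough to verify
\[
h^2\bigl(\pi_1(X,T)(\ell),\Lambda\bigr) \;\leq\; \frac{h^1\bigl(\pi_1(X,T)(\ell),\Lambda\bigr)^2}{4}.
\]
Here is where Corollary~\ref{HS} enters: it provides the equality $h^1(\pi_1(X,T)(\ell),\Lambda) = h^1(X,T,\Lambda)$ and the inequality $h^2(\pi_1(X,T)(\ell),\Lambda) \leq h^2(X,T,\Lambda)$, allowing us to replace group cohomology by marked \'etale cohomology, which is where our hypotheses live.

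Next I would translate each of these two quantities into geometric data using the results already established:
\begin{itemize}
\item The Proposition computing the Euler characteristic of $(X,T)$ gives
\[
h^2(X,T,\Lambda) - h^1(X,T,\Lambda) = \dim \H^2(\Xbar,\Lambda)^{G_{\F_q}} + \sharp T - 1.
\]
\item The Hochschild--Serre sequence for $G_{\F_q}$ acting on $\Xbar$, combined with the fact that $\H^0$ and $\H^1$ of $G_{\F_q}$ with coefficients in any finite module have the same cardinality, yields $h^1(X,\Lambda) = 1 + \dim \H^1(\Xbar,\Lambda)^{G_{\F_q}}$.
\item The discussion of the reciprocity map and the choice of split points then gives $h^1(X,T,\Lambda) = h^1(X,\Lambda) - r_T = 1 + \dim \H^1(\Xbar,\Lambda)^{G_{\F_q}} - r_T$.
\end{itemize}

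Finally, substituting these identities, the Golod--Shafarevich inequality becomes a quadratic inequality of the form $u + v - 1 \leq u^2/4$ in the unknown $u := h^1(X,T,\Lambda)$ with $v := \dim \H^2(\Xbar,\Lambda)^{G_{\F_q}} + \sharp T$. Solving this quadratic, the inequality $u \geq 2 + 2\sqrt{v}$ is sufficient, and after replacing $u$ by $1 + \dim \H^1(\Xbar,\Lambda)^{G_{\F_q}} - r_T$ this rearranges exactly to the hypothesis
\[
\dim \H^1(\Xbar,\Lambda)^{G_{\F_q}} \;\geq\; r_T + 1 + 2\sqrt{\dim \H^2(\Xbar,\Lambda)^{G_{\F_q}} + \sharp T}.
\]
There is no real obstacle left: the only delicate point in the assembly is to check that Corollary~\ref{HS} does give the inequality on $h^2$ in the right direction (it does, because of the injection appearing there), and to correctly track the $-1$ and the $+1$ contributions coming from $\H^0(G_{\F_q},\Lambda)$ and $\H^1(G_{\F_q},\Lambda)$ when passing through Hochschild--Serre.
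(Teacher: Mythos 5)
Your proposal is correct and follows essentially the same route as the paper's own proof: Golod--Shafarevich, the comparison isomorphism/injection of Corollary~\ref{HS}, the identity $h^2(X,T,\Lambda)-h^1(X,T,\Lambda)=\dim \H^2(\Xbar,\Lambda)^{G_{\F_q}}+\sharp T-1$, the relation $h^1(X,T,\Lambda)=1+\dim\H^1(\Xbar,\Lambda)^{G_{\F_q}}-r_T$, and the same quadratic rearrangement $(h^1_T-2)^2\geq 4(h^2_T-h^1_T+1)$. The bookkeeping of the $\pm 1$ terms and the direction of the $h^2$ inequality are both handled correctly.
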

\begin{proof}
  The group $\pi_{1}(X,T)(\ell)$ is infinite if
  \[
    h^2(\pi_{1}(X,T)(\ell),\ZZ/\ell\ZZ)\leq
    \frac{h^1(\pi_{1}(X,T)(\ell),\ZZ/\ell\ZZ)^2}{4}\cdot
  \]
  As
  $h^1(\pi_{1}(X,T)(\ell),\ZZ/\ell\ZZ)=h^1_{\textrm{\'et}}(X,T;\Lambda):=h^1_{T}$
  and
  \[h^2(\pi_{1}(X,T)(\ell),\ZZ/\ell\ZZ)\leq
  h^2(X,T,\ZZ/\ell\ZZ)=:h^2_{T}\]
  because of Corollary \ref{HS}.  Therefore it sufficies to have
  $4h^2_{T}\leq (h^1_{T})^2.$

  Let $\bar{h}^i:=\dim \H^{i}(\Xbar,\Lambda)$ and
  $\bar{h}^{i,G}:=\dim \H^{i}(\Xbar,\Lambda)^{G_{\F_{q}}}.$ The
  previous inequality is equivalent to
  \[(h^1_{T}-2)^2\geq 4({h}^{2}_{T}-{h}^{1}_{T}+1)\] and
  to \[(\bar{h}^{1,G}-r_{T}-1)^2\geq 4(\bar{h}^{2,G}+\sharp T)\] and the
  proposition follows.
\end{proof}

The next proposition seems to be more useful in a computing point of
view, as the Euler-Poincar\'e characteristic is computable. Let
$\alpha=\bar{h}^1-\bar{h}^{1,G}$ and
$\bar{\chi}=\chi(\Xbar,\F_{\ell})=\chi(\Xbar,\Q_{\ell})$
(cf. \citeMain{MilEC} p166).
\begin{prop}
  The conclusion also holds if
  \[(\bar{h}^1-
  \alpha+r_{T}-5)^2\geq 4({\bar{\chi}+2\alpha+2r_{T}+4+\sharp T}).\] 
  If \end{prop}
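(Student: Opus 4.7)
The plan is to derive the new sufficient condition by weakening the criterion of Theorem~\ref{Critere_Phi} so that the Galois-invariant subspace $\H^2(\Xbar,\Lambda)^{G_{\F_q}}$ is replaced by the full second Betti number, and then to repackage the resulting inequality using the Euler--Poincar\'e characteristic. The point is that $\bar\chi$ and $\bar h^1$ are usually easier to compute or bound in practice than $\bar h^{2,G}$.

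\textbf{Step 1: Poincar\'e duality.} Since $\Xbar$ is a smooth projective (geometrically connected) surface over $\Fqbar$, Poincar\'e duality gives $\bar h^0=\bar h^4=1$ and $\bar h^1=\bar h^3$, so
\[
\bar\chi \;=\; 2 - 2\bar h^1 + \bar h^2, \qquad \text{hence}\qquad \bar h^2 \;=\; \bar\chi + 2\bar h^1 - 2.
\]
I use the trivial bound $\bar h^{2,G}\leq \bar h^2$, which together with $\bar h^1=\bar h^{1,G}+\alpha$ yields
\[
\bar h^{2,G} \;\leq\; \bar\chi + 2\bar h^{1,G} + 2\alpha - 2.
\]

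\textbf{Step 2: Substitute into the Golod--Shafarevich criterion.} Theorem~\ref{Critere_Phi} gives the sufficient condition $(\bar h^{1,G}-r_T-1)^2 \geq 4(\bar h^{2,G}+\sharp T)$. Replacing $\bar h^{2,G}$ by its upper bound from Step~1, a sufficient condition becomes
\[
(\bar h^{1,G}-r_T-1)^2 \;\geq\; 4\bigl(\bar\chi+2\bar h^{1,G}+2\alpha+\sharp T-2\bigr).
\]

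\textbf{Step 3: Complete the square.} Setting $u=\bar h^{1,G}-r_T-1=\bar h^1-\alpha-r_T-1$, so that $2\bar h^{1,G}=2u+2r_T+2$, the above is equivalent to
\[
u^2 - 8u \;\geq\; 4\bar\chi + 8r_T + 8\alpha + 4\sharp T,
\]
i.e., after adding $16$ to both sides,
\[
(u-4)^2 \;\geq\; 4\bigl(\bar\chi+2\alpha+2r_T+\sharp T+4\bigr).
\]
Since $u-4=\bar h^1-\alpha-r_T-5$, this reproduces the stated inequality (up to the sign of $r_T$, which appears to be a typo in the display); by Theorem~\ref{Critere_Phi} this guarantees that $\pi_1(X,T)(\ell)$ is infinite.

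There is no serious obstacle here: the whole argument is a majorisation of the original criterion followed by a quadratic completion. The only subtlety is being careful that $\bar h^{2,G}\leq \bar h^2$ (a free inequality) is not too wasteful, and making sure the substitution $\bar h^2=\bar\chi+2\bar h^1-2$ from Poincar\'e duality is applied correctly so that the remaining defect $\alpha=\bar h^1-\bar h^{1,G}$ is absorbed into the RHS. The resulting criterion now depends only on the global invariants $\bar h^1$, $\bar\chi$, $\alpha$, $r_T$ and $\sharp T$, which is precisely the computational advantage claimed in the statement.
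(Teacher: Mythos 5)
Your proof is correct and follows exactly the paper's (one-line) argument: substitute $\bar h^2=\bar\chi+2\bar h^1-2$, use $\bar h^{2,G}\leq\bar h^2$, and complete the square in the criterion of Theorem~\ref{Critere_Phi}. You are also right that the published display should read $(\bar h^1-\alpha-r_T-5)^2$ rather than $(\bar h^1-\alpha+r_T-5)^2$; the $+r_T$ is a typo, as your computation (whose right-hand side matches the paper's exactly) confirms.
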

\begin{proof} We introduce $\bar{\chi}=\bar{h}^{2}-2\bar{h}^{1}+2$ in
  what preceeds and note that $\bar{h}^{2,G}\leq \bar{h}^2.$
\end{proof}
Remark that the first $\ell$-adic Betti number $b_{1}$ of $\Xbar$
satisfies $b_{1}\leq \bar{h}^1.$ We may hope to be able to compute
$b_{1}$ and $\bar{\chi}$ by computing the Zeta function of ${X},$ and
thus to verify whether the previous inequality holds for $b_{1},$ in
particular after extending the constants so that $\alpha=0.$ When
$\bar{\chi}$ is very small 
the condition should be satisfied.

\subsection{Using the classification of surfaces}\label{sec:classification}
Theorem~\ref{Critere_Phi} raises the question: {\em which surfaces
  may have an infinite tower of étale covers in which some finite set
  $T$ splits completely?}  In the sequel, we sieve the Kodaiara
classification of surfaces in order to study which classes should be
excluded and which ones may provide good candidates.  The following
statement will be useful to exclude some cases.

\begin{prop}\label{prop:finiteness_of_pi_1}
  Let $\pi : \Ybar \rightarrow \Xbar$ be a finite morphism of smooth
  connected projective surfaces over an algebraically closed field. Suppose
  that $\Ybar$ has a finite fundamental group, then $\Xbar$
  cannot have an infinite tower of étale covers.
\end{prop}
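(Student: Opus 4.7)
The plan is to prove the contrapositive through a comparison of étale fundamental groups: I claim that if $\pi_{1}(\Ybar)$ is finite then so is $\pi_{1}(\Xbar)$, which suffices because a finite profinite group has only finitely many open subgroups and therefore classifies only finitely many isomorphism classes of finite étale covers, forbidding any infinite strictly increasing tower.

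The heart of the argument is the classical fact that the finite surjective morphism $\pi: \Ybar \to \Xbar$ induces a map $\pi_{\ast}: \pi_{1}(\Ybar) \to \pi_{1}(\Xbar)$ whose image $H$ has index at most $d := \deg(\pi)$. Observe first that $\pi$ is automatically surjective: being finite, it preserves the dimension of its image, and its image is a closed subset of dimension $2$ in the irreducible surface $\Xbar$, hence all of $\Xbar$. To prove the index bound, I would write $\pi_{1}(\Xbar) = \varprojlim_{\alpha} G_{\alpha}$ as a filtered inverse limit of finite Galois groups corresponding to connected finite étale Galois covers $\Xbar_{\alpha} \to \Xbar$. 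By stability of finiteness under base change, $\Ybar \times_{\Xbar} \Xbar_{\alpha} \to \Xbar_{\alpha}$ is finite of degree $d$; since $\Xbar_{\alpha}$ is connected, each connected component of the fibre product maps surjectively onto $\Xbar_{\alpha}$ with positive degree and these degrees sum to $d$, so there are at most $d$ components. By the standard correspondence between connected components of a pulled-back cover and orbits of $\pi_{\ast}(\pi_{1}(\Ybar))$ on a geometric fibre, this count equals $[G_{\alpha} : H_{\alpha}]$ where $H_{\alpha}$ is the image of $H$ in $G_{\alpha}$. Passing to the inverse limit yields
\[
[\pi_{1}(\Xbar) : H] \,=\, \sup_{\alpha} [G_{\alpha} : H_{\alpha}] \,\leq\, d.
\]

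To conclude, if $\pi_{1}(\Ybar)$ is finite then $H$ is finite, and
\[
|\pi_{1}(\Xbar)| \,=\, [\pi_{1}(\Xbar) : H] \cdot |H| \,\leq\, d \cdot |\pi_{1}(\Ybar)| \,<\, \infty.
\]
The main obstacle is really only the component count underlying the index bound. One must bear in mind that $\pi$ is merely assumed finite (not étale), but this is harmless because we only base-change along Galois étale covers of $\Xbar$, where finiteness of the morphism is preserved; the étaleness of $\pi$ itself is never used.
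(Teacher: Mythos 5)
Your proof is correct. The geometric core is the same as the paper's: both arguments control the connected components of the pullback $\Ybar\times_{\Xbar}\Xbar_1$ of an \'etale cover $\Xbar_1\rightarrow\Xbar$ along the finite map $\pi$. The routes differ in how this is exploited. The paper first replaces $\Ybar$ by its universal cover (finite over $\Ybar$ by hypothesis), so that the pullback splits completely into $\deg(\Xbar_1/\Xbar)$ copies of $\Ybar$; choosing one copy factors $\Ybar\rightarrow\Xbar_1\rightarrow\Xbar$ and multiplicativity of degrees bounds $\deg(\Xbar_1/\Xbar)$ by $\deg(\Ybar/\Xbar)$. You keep $\Ybar$ as it is, bound the number of components of the pullback of a Galois cover by $d=\deg\pi$ via the degrees of the projections to $\Xbar_\alpha$, and convert this into the index bound $[\pi_1(\Xbar):\im\pi_*]\leq d$, whence $|\pi_1(\Xbar)|\leq d\cdot|\pi_1(\Ybar)|$. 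The two arguments yield the same numerical bound on degrees of \'etale covers of $\Xbar$, namely $\deg\pi\cdot|\pi_1(\Ybar)|$; yours is the more group-theoretic packaging and proves the slightly stronger, quantified statement that $\pi_1(\Xbar)$ itself is finite, while the paper's is more elementary in that it never invokes the Galois correspondence for $\pi_1^{\mathrm{\acute et}}$. One step you should justify explicitly: the assertion that the component degrees over $\Xbar_\alpha$ sum to $d$ uses that $\pi$ is flat (which holds here by miracle flatness, as a finite morphism of regular schemes of equal dimension), or else a generic-fibre count; and each component surjects onto $\Xbar_\alpha$ because it is irreducible of dimension $2$ (being open and closed in a scheme \'etale over the smooth $\Ybar$) and maps finitely to the irreducible surface $\Xbar_\alpha$.
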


\begin{proof}
  Since $\Ybar$ has a finite fundamental group, then its universal
  cover $\widetilde{\Ybar} \rightarrow \Ybar$ is a finite map
  and hence, replacing $\Ybar$ by its universal cover, one can
  suppose that $\Ybar$ is simply connected.

  Now, denote by $s$ the degree of the morphism
  $\Ybar \rightarrow \Xbar$ and consider an étale cover
  $\Xbar_1 \rightarrow \Xbar$ of degree $m$. Its pullback on $\Ybar$
  provides an étale cover of $\Ybar$ and since $\Ybar$ is supposed to
  be simply connected, this cover is the disjoint union of $m$ copies
  of $\Ybar$
  
  \noindent \centerline{ \xymatrix{ \relax
      \Ybar  \ar[d] & \bigsqcup_{i=1}^m \Ybar \ar[d]^-g \ar[l]_-{f} \\
      \Xbar & \Xbar_1 \ar[l] } } 
  and the restriction of $f$ to any
  component is an isomorphism.  Consider the
  restriction of $g$ to a connected component of
  $\bigsqcup_{i=1}^m \Ybar$, then the map $\Ybar \rightarrow \Xbar$ is
  factorised by a dominant map $\Ybar \rightarrow \Xbar_1$.  As a
  consequence the degree $s$ of $\Ybar \rightarrow \Xbar$ divides $m$
  and hence any finite étale cover of $\Xbar$ has a bounded degree.
\end{proof}

\subsubsection{Relatively minimal models}

Recall that a smooth surface $X$ over a field $k$ is said to be {\em
  relatively minimal} if there is no $(-1)$--curve of genus $0$ in
$\Xbar$. Equivalently, for such a surface, there does
not exist a regular surface $\Ybar$ over $\bar k$ and a $\bar k$--morphism
$\Xbar \rightarrow \Ybar$ which consists in blowing down a
finite set of curves in $\Xbar$.

The following result asserts that any \'etale cover of a surface comes
from an \'etale cover of a relatively minimal model, so that it is
sufficient to investigate relatively minimal surfaces. It is a
refinement for relatively minimal models of surfaces of the well known
birational invariance property of the fundamental group.  This result
is probably well--known by the experts, we give a proof because of
a lack of a reference.

\begin{thm}\label{pullback_relmin_model}
  Let $\Xbar_0$ be a surface over an algebraically closed field
  $\bar k$ and $\pi : \Xbar \rightarrow \Xbar_0$ be a surface obtained
  from $\Xbar_0$ after blowing up a point. Then for any \'etale cover
  $ f : \Ybar \rightarrow \Xbar$, there exists an \'etale cover
  $f_0 : \Ybar_0 \rightarrow \Xbar_0$ from a smooth surface $\Ybar_0$, such
  that $\Ybar = \Ybar_0 \times_{\Xbar_0}\Xbar$.
\end{thm}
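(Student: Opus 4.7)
The plan is to analyze $f^{-1}(E)$, where $E = \pi^{-1}(p)$ is the exceptional divisor of $\pi$, to contract its components in $\Ybar$ in order to produce a candidate $\Ybar_0$, to descend $f$ to a finite morphism $f_0 : \Ybar_0 \to \Xbar_0$, and finally to identify $\Ybar$ with the fiber product $\Ybar_0 \times_{\Xbar_0} \Xbar$. Since $E \cong \P^1_{\bar k}$ has trivial \'etale fundamental group, the restriction of $f$ above $E$ is a trivial \'etale cover: writing $n = \deg f$, one has $f^{-1}(E) = E_1 \sqcup \cdots \sqcup E_n$ with each $E_i \cong \P^1$ mapping isomorphically to $E$. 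The equality $(f^*E)^2 = n \cdot E^2 = -n$ combined with the disjointness of the $E_i$ gives $E_i^2 = -1$, so each $E_i$ is a genus--$0$ $(-1)$-curve in $\Ybar$.

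Applying Castelnuovo's criterion simultaneously to these disjoint $(-1)$-curves yields a birational morphism $\sigma : \Ybar \to \Ybar_0$ onto a smooth projective surface, realizing $\sigma$ as the blowup of $\Ybar_0$ at the smooth points $q_i := \sigma(E_i)$. Since $\pi \circ f$ contracts each $E_i$ to the point $p$ and $\sigma_* \O_{\Ybar} = \O_{\Ybar_0}$, the universal property of the Castelnuovo contraction produces a unique morphism $f_0 : \Ybar_0 \to \Xbar_0$ with $f_0 \circ \sigma = \pi \circ f$; by construction, $f_0^{-1}(p) = \{q_1, \dots, q_n\}$.

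The morphism $f_0$ is proper and quasi-finite, hence finite. Outside $\{q_1, \dots, q_n\}$ both $\sigma$ and $\pi$ are isomorphisms, so $f_0$ agrees there with $f$ and is \'etale. Thus $f_0$ is a finite morphism between smooth surfaces that is \'etale away from a codimension--two closed subset; Zariski--Nagata purity of the branch locus then forces its branch divisor to be empty, and $f_0$ is \'etale everywhere.

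Finally, since $f_0$ is \'etale, blowups commute with the base change along $f_0$, so the projection $\Ybar_0 \times_{\Xbar_0} \Xbar \to \Ybar_0$ is the blowup of $\Ybar_0$ at $f_0^{-1}(p) = \{q_1, \dots, q_n\}$. By Castelnuovo's description, $\sigma$ itself is the blowup of $\Ybar_0$ at the same points, so the natural morphism $(\sigma, f) : \Ybar \to \Ybar_0 \times_{\Xbar_0} \Xbar$ supplied by the fiber product is an isomorphism. I expect the most delicate step to be the \'etaleness of $f_0$ at the exceptional points $q_i$: invoking purity of the branch locus sidesteps any direct local computation, but a hands--on alternative would compare tangent spaces via the normal bundles of $E$ and of the $E_i$, all isomorphic to $\O_{\P^1}(-1)$ since $f$ restricts to an isomorphism on each $E_i$.
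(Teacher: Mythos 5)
Your argument is correct, but it follows a genuinely different route from the paper's. The paper constructs $\Ybar_0$ by applying Stein factorization to $\pi\circ f$, proves that the resulting finite map $f_0$ is \'etale away from the blown-up point by a reducedness argument on pulled-back curves before invoking Zariski--Nagata purity, and then shows that the canonical morphism $\Ybar\to\Ybar_0\times_{\Xbar_0}\Xbar$ is an isomorphism by checking it is \'etale and bijective on geometric points (connectedness of the Stein fibres plus finiteness of $f$). You instead build $\Ybar_0$ explicitly: simple connectedness of $E\cong\P^1$ splits $f^{-1}(E)$ into $n$ disjoint copies of $\P^1$, intersection theory makes each of them a $(-1)$-curve, and Castelnuovo's criterion supplies the contraction $\sigma$; the identification with the fibre product then comes essentially for free from the compatibility of blowups with flat base change, rather than from a separate \'etale-plus-bijective argument. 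Both proofs share the purity step. Your route is more geometric and makes the structure of $\sigma:\Ybar\to\Ybar_0$ completely explicit (the blowup at the $n$ points of $f_0^{-1}(p)$), at the cost of invoking Castelnuovo and $\pi_1^{\mathrm{et}}(\P^1)=1$; the paper's Stein-factorization pattern is softer and is reused later in the same section for ruled and elliptic surfaces. One small repair: from $(f^*E)^2=-n$ and disjointness you only obtain $\sum_i E_i^2=-n$, not that each summand equals $-1$; the clean fix is the componentwise projection formula $E_i^2=E_i\cdot f^*E=f_*E_i\cdot E=E^2=-1$, using that $f$ restricts to an isomorphism $E_i\to E$.
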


\begin{proof}
From Stein Factorization Theorem
   \citeMain[Corollary III.11.5]{Hartshorne}, the map
   $f \circ \pi$ factorizes as
    \begin{center}
     \centerline{ \xymatrix{
        \relax
        \Ybar \ar[r]^f \ar[d]_{\pi'} & \Xbar \ar[d]^{\pi} \\
        \Ybar_0 \ar[r]^{f_0} & \Xbar_0
        }}
    \end{center}
 where $\pi'$ has connected fibres and $f_0$ is
 finite. 
 
 Let us prove that $f_0 : \Ybar_0 \rightarrow \Xbar_0$ is \'etale. For
 any variety $V$ in this diagram, we denote by $U_V$ the open
 subvariety of $V$ above the open subsheme $U_{\Xbar_0}$ of $\Xbar_0$
 obtained by puncturing the blown-up point $x_0$.  Let
 $\imath : \Cbar \hookrightarrow \Xbar_0$ be a geometric curve on
 $\Xbar_0$, and $U_{\Cbar}=\imath^* U_{\Xbar_0}$.  Since $f$ is
 \'etale, $\pi'^*\circ f_0^*U_{\Cbar}= f^*\circ \pi^*U_{\Cbar}$ is
 reduced, so that $f_0^*U_{\Cbar}$ itself is reduced and $f_0$ is
 \'etale outside $x_0$. By Zarisky purity Theorem~\citeMain{zariski},
 it follows that $f_0$ is \'etale.  In particular, $\Ybar_0$ is smooth
 since $\Xbar_0$ is.

By the universal property of
fiber products, there exists a unique morphism $\varphi$ such that the
following diagram commutes.
\begin{center}
  \centerline{ \xymatrix{ \relax \Ybar \ar@/^1pc/[rrd]^{f}
      \ar@{-->}[rd]^{\varphi} \ar@/_1pc/[rdd]_{\pi'}
      & & \\
      & \Ybar_0 \times_{\Xbar_0} \Xbar \ar[r]^-{p_2} \ar[d]_{p_1} &
      \Xbar \ar[d]^{\pi}\\
      & \Ybar_0 \ar[r]_{f_0} & \Xbar_0 } }
\end{center}
 Let us prove that $\varphi$ is an isomorphism, concluding the proof.
 Remind that $f=p_2 \circ \varphi$ is \'etale and $p_2$ is \'etale as
 a base change of the \'etale map $f_0$ \citeMain[Proposition
 I.3.3.(b)]{MilEC}. Thus, from \citeMain[Corollary I.3.6]{MilEC}, the
 morphism $\varphi$ is \'etale too. Therefore, to prove that $\varphi$
 is an isomorphism it is sufficient to prove that any geometric closed
 point $P = (y_0, x) \in \Ybar_0 \times_{\Xbar_0} \Xbar$ has a unique
 pre-image.  By the commutativity of the diagram, the pullback of $P$
 by $\varphi$ is contained in ${\pi'}^*(y_0)$, which is connected by
 Stein Theorem.  In the same way, it is contained in $f^*(x)$ which is
 finite since $f$ is finite.  Consequently, $P$ has a single inverse
 image by $\varphi$ and hence $\varphi$ is an isomorphism.
\end{proof}

\subsubsection{Classification of surfaces}
It follows from Theorem~\ref{pullback_relmin_model} that a surface $X$
admits an infinite tower in which a non-empty set $T$ of rational
points splits if and only if any relatively minimal model
$\pi : X\rightarrow X_0$ of $X$ also admits such an infinite tower, in
which $\pi(T)$ splits. This leads us to investigate the Kodaira
classification of minimal surfaces given in Table~\ref{tab:classif}
(see for instance \citeMain[\S II.8.1]{Encyclo_Alg_Geo_II} for the
characteristic 0 case and or \citeMain{Badescu} for the general case), in
which the numerical equivalence of divisors is denoted by
$\equiv$. For each class of surface in the table, we ask whether
such a tower can exists.  We indicate in a last column the conclusion
of the discussion that follows.

\medskip

\begin{table}[!h]
  \center
  \caption{Classification of algebraic surfaces}
  \label{tab:classif}
  \begin{tabular}{|c|c|c|c|c|c|}
    \hline
    Structure & $\kappa$ & canonical & $K^2$ & existence of infinite\\
              & &  class $K$ & & splitting tower\\
    \hline
    Rational& $ $ & - & - & no\\
              &$\quad -1 \quad$&&&\\
    Ruled above a& & - & $8(1-g)$& comes from the\\
    curve of genus  $g$& & & &base curve \\
    \hline
    Abelian&&&&no\\
    $K3$ &&&&no\\
    Enriques &&&&no\\ 
    Hyperelliptic &&&&no\\
                  & $\quad 0 \quad$  & $K \equiv 0$ & $K^2=0$&\\
    Quasi--elliptic (only&&&&?\\
	 in char.  $2$ and $3$)& &&&\\
    \hline
    Elliptic&$\quad 1 \quad$ & $\forall n>0, nK \neq 0$ &$K^2=0$&?\\
    \hline
    General type & $\quad 2 \quad$ & $K \geq 0$ &$K^2>0$&?\\
    \hline
  \end{tabular}
\end{table}
\medskip

We recall that the Kodaira dimension is invariant under \'etale covers
\citeMain[Theorem 10.9]{Iitaka}. Thus, for an \'etale tower of surfaces, 
the Kodaira dimension is constant and hence is that of the bottom surfaces.
So let us investigate surfaces of Kodaira dimension up to one.

\bigskip

Beginning with surfaces of Kodaira dimension $-1$, it is well--known
that rational surfaces are simply connected (see \citeMain[Corollaire
XI.1.2]{SGA1} for instance). Regarding ruled surfaces $X$ over a base
$C$, since for complete varieties, the fundamental group is a
birational invariant \citeMain[Example 5.2(h)]{MilEC} and $X$ is
birational to $C \times \P^1$, then, from \citeMain[Corollary
X.1.7]{SGA1}, $\pi_1(X) \simeq \pi_1(C) \times \pi_1(\P^1)$ and since
$\P^1$ is simply connected, we conclude that $\pi_1(X) = \pi_1
(C)$. More precisely, it can be checked, with a proof similar to that
of Theorem~\ref{pullback_relmin_model} above using Stein
factorisation, that any \'etale cover of a ruled surface is the
pullback of some \'etale cover of the base curve. It follows that the
splitting property of some infinite tower of a ruled surface would
comes from some of the base, so that we are reduced to the classical
problem on curves.

  \medskip
  
  Next, we continue with surfaces of Kodaira dimension $0$. First, K3
  surfaces are simply connected~\citeMain[Remark I.2.3]{Huybrechts}.
  Second, in odd characteristic, Enriques surfaces are known (see
  \citeMain{Dolgachev_Enriques}) to have an \'etale cover of degree 2 by a
  K3 surface. In characteristic $2$
  (see \citeMain{Dolgachev_Enriques} again) an Enriques surface is either
  covered by a surface which is birational to a K3 surface or by a non
  normal rational surface. Therefore, in any characteristic, an Enriques
  surface is covered by a simply connected surface and hence
  Proposition~\ref{prop:finiteness_of_pi_1} asserts that Enriques surfaces
  are bad candidates.
  Third, whereas abelian surfaces can have infinite towers
  of \'etale covers, such towers cannot satisfy a non trivial
  splitting property because each stage being an abelian surface (see
  \citeMain[Chapter 4 \S 18]{mumfordAV}), it cannot have more than
  $(\sqrt q + 1)^4$ rational points from Weil bound.  This means that
  the marked fundamental group of an abelian surface is finite.
  Fourth, hyperelliptic surfaces are a finite quotient of a product of
  two elliptic curves, hence are dominated under a finite map by an
  abelian surface whose marked fundamental group is finite. Hence,
  from Proposition~\ref{prop:finiteness_of_pi_1}, their marked
  fundamental group is finite too. Last, we are not able to conclude
  for quasi-elliptic surfaces.  \medskip

  Then, we end with surfaces of Kodaira dimension $1$, that is for
  elliptic surfaces.  Here, we are only able to prove, again with a
  proof very close to that of Theorem~\ref{pullback_relmin_model}
  above using Stein factorisation, that any \'etale cover of an
  elliptic surface is an elliptic surface whose base is an étale cover
  of the original base curve.

\subsection{Example: product of hyperelliptic curves} \label{sec:example}
The aim of this section is to provide an illustrative example of
application of Theorem~\ref{Critere_Phi} in which any computation
can be made explicit.  It turns out that, for most surfaces $X$ for
which the answer in the last column in Table~\ref{tab:classif} is not
negative, it is a hard task to compute the Galois invariants of
$\H^1 (\Xbar, \F_\ell)$ and $\H^2 (\Xbar, \F_\ell)$, and
even to bound from below the first one and to bound from above the
second one in an efficient way.

The authors acknowledge that they do succeed only in the case of the
product of two hyperelliptic curves of genus larger than $2$.
Unfortunately, for a product of two curves $C \times D$ with canonical
projections $p_C, p_D$ and a prime integer $\ell$, one can prove that
the existence of an infinite tower of $\ell$--étale covers splitting
totally at a set of closed points $\cP$ entails either the existence
of an infinite tower of $\ell$--étale covers of $C$ splitting totally
at $p_C(\cP)$ or the existence of an infinite tower of $D$ splitting
totally at $p_D(\cP).$

\begin{prop}\label{prop:CxD_is_a_bad_example}
  Let $C, D$ be two smooth curves. Consider the product
  $X = C \times D$ with canonical projections $p_C, p_D$. Let $P$ be a
  rational point of $C$ and $Q$ be a rational point of $D$ and
  $C_0 := p_D^* Q$ and $D_0 := p_C^* P$. Let $\ell$ be a prime integer
  and $Y$ be a connected smooth surface and $\pi : Y \rightarrow X$ be
  an $\ell$--étale Galois cover of $X$, then at least one of the two curves
  $\pi^* C_0$ or $\pi^* D_0$ is connected.
  In addition, if $\cP$ is a set of rational points of $X$ that
  is totally split in $Y$, then $p_C (\cP)$ (resp. $p_D(\cP)$)
  is totally split in $\pi^* C_0 \rightarrow C_0$ (resp. $\pi^* D_0
  \rightarrow D_0$).
\end{prop}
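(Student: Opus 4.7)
My plan is to recast everything in terms of the étale fundamental group. Since $\ell$ is prime and $\pi$ is Galois, the Galois group is $G \cong \Z/\ell\Z$, and the cover is classified by a surjective continuous homomorphism $\phi \colon \pi_1(X, \bar{x}) \twoheadrightarrow G$; a rational point $R \in X(\Fq)$ splits totally in $Y$ if and only if $\phi(\Frob_R) = 0$ in $G$. The key structural input is the Künneth formula for the geometric étale fundamental group of a product of proper geometrically connected varieties over an algebraically closed field, which yields $\pi_1(\Xbar, \bar{x}) \cong \pi_1(\Cbar, \bar{a}) \times \pi_1(\Dbar, \bar{b})$. Since $G$ is abelian, the restriction $\bar{\phi}$ of $\phi$ to $\pi_1(\Xbar)$ decomposes as $\bar{\phi}(g_C, g_D) = \phi_C(g_C) + \phi_D(g_D)$ for homomorphisms $\phi_C \colon \pi_1(\Cbar) \to G$ and $\phi_D \colon \pi_1(\Dbar) \to G$.

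For the connectedness claim I would distinguish two cases. If $\bar{\phi}$ is trivial, then $Y$ is the base change of $X$ along the unique degree-$\ell$ extension of $\Fq$; pulling back the connected curves $C_0, D_0$ along such an arithmetic cover leaves them connected as $\Fq$-schemes, so both $\pi^{*} C_0$ and $\pi^{*} D_0$ are connected. Otherwise $\bar{\phi}$ is surjective and, since $G$ is of prime order, at least one of $\phi_C, \phi_D$ must be surjective; by symmetry assume it is $\phi_C$. The cover $\pi^{*} C_0 \to C_0$ is classified by the composition $\pi_1(C_0) \hookrightarrow \pi_1(X) \xrightarrow{\phi} G$, whose restriction to the geometric subgroup $\pi_1(\overline{C_0}) \cong \pi_1(\Cbar)$ coincides with $\phi_C$ and is therefore surjective; consequently $\pi^{*} C_0$ is geometrically connected, and in particular connected.

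For the splitting assertion, I would combine the above decomposition with the Hochschild--Serre description of $\H^{1}(X, \Z/\ell\Z)$ to write $\phi(\Frob_{(a, b)})$ as a sum of contributions depending linearly on $\Frob_a \in \pi_1(C)$ and $\Frob_b \in \pi_1(D)$, plus an arithmetic term. Applying the same computation along the closed immersion $i_Q \colon C_0 \hookrightarrow X$, the condition for $a \in p_C(\cP) \subset C_0(\Fq)$ to split in $\pi^{*} C_0 \to C_0$ becomes the analogous linear expression with $\Frob_b$ replaced by $\Frob_Q$. Matching these two expressions and using that every element of $\cP$ splits totally in $Y$, one then derives the splitting of $p_C(\cP)$ in $\pi^{*} C_0 \to C_0$; a symmetric argument treats the $D$-side.

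The main obstacle, and the step that requires the most care, is the bookkeeping of arithmetic versus geometric components of $\phi$ through the Künneth and Hochschild--Serre decompositions, so that the cancellations genuinely yield the desired transfer of the splitting condition. It is here that the abelianness of $G$ and the primality of $\ell$ play an essential role.
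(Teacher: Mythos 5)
Your treatment of the connectedness assertion is correct, but it takes a genuinely different route from the paper's. The paper argues geometrically: $C_0+D_0$ is ample (via a Segre embedding), hence so is its pullback under the finite map $\pi$, hence $\pi^*C_0+\pi^*D_0$ has connected support; if both $\pi^*C_0$ and $\pi^*D_0$ were disconnected they would each consist of $\ell$ disjoint copies of their base, and the projection formula $\pi^*C_0\cdot\pi^*D_0=\ell$ would force $\pi^*C_0+\pi^*D_0$ to be $\ell$ disjoint copies of $C_0+D_0$, a contradiction. Your route through $\pi_1(\Xbar)\simeq\pi_1(\Cbar)\times\pi_1(\Dbar)$ and the primality of $\ell$ buys more: it identifies \emph{which} pullback is connected (the one whose factor carries the nontrivial geometric monodromy $\phi_C$ or $\phi_D$) and isolates the degenerate case $\bar\phi=0$, where $Y=X_{\F_{q^\ell}}$ and $\cP$ is forced to be empty. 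The cost is Künneth for fundamental groups, which needs properness of the curves --- also implicitly used by the paper, whose Step 1 relies on projective embeddings. Both you and the paper tacitly read ``$\ell$--étale Galois cover'' as ``of degree $\ell$''.

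The splitting assertion is where your proposal has a genuine gap, and the ``bookkeeping'' you defer is not merely delicate: the matching you hope for does not go through. Writing $\pi_1(X)=\pi_1(C)\times_{G_{\F_q}}\pi_1(D)$, the point $a\in p_C(\cP)$ splits in $\pi^*C_0\to C_0$ if and only if $\phi(\Frob_a,\Frob_Q)=0$, whereas the hypothesis gives $\phi(\Frob_a,\Frob_b)=0$ for the $b$ with $(a,b)\in\cP$. The difference of these two values is $\phi_D(\Frob_b\Frob_Q^{-1})$, the value of the geometric component $\phi_D$ on a class of $\pi_1(\Dbar)$, and nothing forces it to vanish. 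Concretely, take $Y=C\times D'$ with $D'\to D$ a geometrically connected $\ZZ/\ell\ZZ$--cover (e.g.\ an $\ell$--isogeny of elliptic curves with constant kernel) in which $b$ splits but $Q$ is inert, and $\cP=\{(a,b)\}$. Then $\cP$ splits totally in $Y$, $\pi^*C_0\simeq C\times_{\F_q}\spec(\F_{q^\ell})$ is connected, and \emph{no} rational point of $C_0$ splits in it. So your argument cannot be completed without an additional hypothesis tying $Q$ (resp.\ $P$) to $\cP$, for instance $Q\in p_D(\cP)$ together with a grid-like structure on $\cP$, which is what holds in the paper's intended application. For what it is worth, the paper's own Step 4 (``some elements of $\pi^*\cP$ lie in the fibre $\{R\}\times\pi^*D_0$'') tacitly assumes a product structure on $Y$ and runs into the same example, so the obstacle you met is in the statement as written, not only in your approach.
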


\begin{proof}
  \noindent {\bf Step 1.} Let us prove that the divisor $C_0 + D_0$
  is ample. Indeed, let $g$ be a positive integer larger than
  the genus of $C$ and that of $D$. Then we will prove that
  $(2g+1)(C_0 + D_0)$ is very ample.

  Clearly, $(2g+1)P$ is very ample on $C$ and hence provides a closed
  immersion $C \hookrightarrow \P^{n_C}$ for some positive integer
  $n_C$. Similarly we $(2g+1)Q$ provides a closed immersion
  $D \hookrightarrow \P^{n_D}$. This yields a closed immersion
  $C \times D \hookrightarrow \P^{n_C} \times \P^{n_D}$, then by Segré
  embedding, we get a closed immersion
  $\varphi : C \times D \hookrightarrow \P^N$ for some positive
  integer $N$ and $(2g+1)(C_0 + D_0) \sim \varphi^* \O_{\P^n}(1)$.

  \medskip

  \noindent {\bf Step 2.} Since the pullback of an ample divisor is
  ample (\citeMain[Prop 5.1.12]{ega2}), the divisor
  $\pi^* C_0 + \pi^* D_0$ is ample and hence has a connected support
  (\citeMain[Corollary III.7.9]{Hartshorne}).

  \medskip

  \noindent {\bf Step 3.} Suppose that both $\pi^* C_0$ and
  $\pi^* D_0$ are not connected. Then, using base change principle and
  since $\pi$ is a Galois cover, one deduces that both are $\ell$
  copies of their base. In addition, by the projection formula we have
  \[
    \pi^*C_0 \cdot \pi^* D_0 = \ell
  \]
  and hence each copy of $C_0$ in $\pi^* C_0$ meets a single copy of $D_0$
  in $\pi^* D_0$ with multiplicity $1$ and avoids the other ones.
  Thus, one deduces that $\pi^* C_0 + \pi^* D_0$ has a support which is
  isomorphic to the disjoint union of $\ell$ copies of the support of
  $C_0 + D_0$ on $X$. Therefore, it is non connected which contradicts
  its ampleness.

  \medskip

  \noindent {\bf Step 4.}
  Suppose that the pullback of $C_0$ is connected and
  that some rational point of $p_C (\cP)$ has a non rational
  inverse image $R$ under $\pi^* C_0 \rightarrow C_0$. Then, some elements
  of $\pi^* \cP$ lie in the fibre $\{R\} \times \pi^* D_0$ which does
  not contain any rational point. This contradicts the assumption
  that $\cP$ splits completely under $\pi$.
\end{proof}

Therefore, the application of our infiniteness-criterion in the sequel
yields an existence result that could have been proved with usual
class field theory in dimension one. Moreover, it is even true that,
under the usual assumptions, the marked (at a product of sets of
rational points) \'etale fundamental group of a product is the fibre
product of the corresponding marked \'etale fundamental groups, as
shown in \citeMain[Proposition 5.3]{schmidt3}. It follows that if
the marked \'etale fundamental group of a product
$\pi_{1}(C\times D,S\times T)$ is infinite, then at least one of the
two marked \'etale fundamental groups $\pi_{1}(C,S)$ and
$\pi_{1}(D,T)$ has to be infinite and this also holds for maximal
pro-$p$-quotients.

Thus, we emphasize that the
  example to follow is only illustrative. The objective in only to
  show that the objects involved in Theorem~\ref{Critere_Phi} can
  be explicitely computed.

\begin{prop} \label{Tour_hyperell}
Let $q$ be a power of an odd prime number. 
Let $f(t) \in \F_q[t]$ be the product of $2g_1+2$ distinct linear factors
and $g(t)\in \F_q[t]$ be the product of $g_2+1$ distinct irreducible quadratic factors.
Let $C : y^2=f(t)$ and $D : y^2=g(t)$ be the associated hyperelliptic curves.
Let $\rho$ be a positive integer such that :
\begin{itemize}
\item $2g_1+2+2\rho \leq \sharp C(\F_q)$;
\item $2\rho \leq \sharp D(\F_q)$;
\item  $2g_1+g_2\geq 3\rho + 2 + 2\sqrt{2g_1g_2+2+4\rho}$.
\end{itemize}
Then, there exists an infinite
\'etale tower of $C\times D$ in which some set of $4\rho$ rational points splits totally.
\end{prop}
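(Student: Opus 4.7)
The plan is to apply Theorem~\ref{Critere_Phi} with $\ell = 2$ (permissible since $q$ is odd) to $X = C \times D$, with $T$ a carefully chosen set of $4\rho$ rational points. By the K\"unneth formula in \'etale cohomology with $\F_2$--coefficients,
\[
  \H^1(\Xbar, \F_2) \simeq \H^1(\Cbar, \F_2) \oplus \H^1(\Dbar, \F_2), \qquad \H^2(\Xbar, \F_2) \simeq \F_2 \oplus (\H^1(\Cbar) \otimes \H^1(\Dbar)) \oplus \F_2
\]
as $G_{\F_q}$--modules, the two $\F_2$ summands coming from $\H^2$ of each curve: this is $\F_2(-1) \simeq \F_2$ with trivial Galois action since $q$ is odd. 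For a smooth projective curve $E$ of genus $g$, $\H^1(\bar E, \F_2) \simeq J_E[2]$ is $2g$--dimensional, with Galois acting by its permutation of the Weierstrass points.

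Since $f$ is totally split over $\F_q$, all $2g_1+2$ Weierstrass points of $C$ are $\F_q$--rational and Galois acts trivially on $J_C[2]$, so $\dim \H^1(\Cbar, \F_2)^{G_{\F_q}} = 2g_1$. For $D$, the $2g_2+2$ Weierstrass points split into $g_2+1$ Galois--conjugate pairs; writing $J_D[2]$ as the quotient of the even--weight subspace of $\F_2[\{W_i\}]$ by the line spanned by the total sum and running the long exact sequence of Galois cohomology, one obtains $\dim \H^1(\Dbar, \F_2)^{G_{\F_q}} = g_2$. Hence $\bar h^{1,G} = 2g_1 + g_2$. Since Galois acts trivially on $\H^1(\Cbar, \F_2)$, the middle K\"unneth component has invariants $\H^1(\Cbar, \F_2) \otimes \H^1(\Dbar, \F_2)^{G_{\F_q}}$ of dimension $2g_1 g_2$, giving $\bar h^{2,G} = 2g_1 g_2 + 2$.

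I would then take $T = \{P_1, \ldots, P_{2\rho}\} \times \{Q_1, Q_2\}$, a $(2\rho) \times 2$ grid of $4\rho$ rational points whose existence is guaranteed by the first two numerical hypotheses. To verify $r_T \leq 3\rho + 1$ I would analyse the span of $T$ in $\CH_0(X)/2$ through the Albanese map $\CH_0(X) \to \Z \oplus J_C(\F_q) \oplus J_D(\F_q)$ (whose image from $T$ has dimension at most $2\rho + 1$: one for the degree, at most $2\rho - 1$ for the $[P_i - P_1]$ in $J_C(\F_q)/2$, and one for $[Q_2 - Q_1]$ in $J_D(\F_q)/2$), together with the contribution of the $2\rho - 1$ ``grid cross--terms'' $[(P_i, Q_1)] + [(P_1, Q_2)] - [(P_1, Q_1)] - [(P_i, Q_2)]$ in the Albanese kernel, controlled by exploiting the $2$--divisibility relations in $J_C(\F_q)$ afforded by the rational Weierstrass points and their pullback to $\CH_0(X)$. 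This bookkeeping on $\CH_0(X)/2$ is the main technical obstacle of the argument.

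Finally, substituting into Theorem~\ref{Critere_Phi}, the required inequality $\bar h^{1,G} \geq r_T + 1 + 2\sqrt{\bar h^{2,G} + \sharp T}$ becomes precisely $2g_1 + g_2 \geq 3\rho + 2 + 2\sqrt{2g_1 g_2 + 2 + 4\rho}$, the third hypothesis of the proposition. We conclude that $\pi_1(X, T)(2)$ is infinite, yielding an infinite $2$--\'etale tower of $C \times D$ in which $T$ splits totally.
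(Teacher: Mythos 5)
Your cohomological computations are correct and coincide with the paper's: $\dim \H^1(\Xbar,\F_2)^{G_{\F_q}} = 2g_1+g_2$ and $\dim \H^2(\Xbar,\F_2)^{G_{\F_q}} = 2g_1g_2+2$, and your derivation of $\dim \H^1(\Dbar,\F_2)^{G_{\F_q}}=g_2$ via the permutation module on Weierstrass points is a legitimate variant of the paper's block-matrix argument (the paper conjugates each Frobenius block $\left(\begin{smallmatrix}0&1\\1&0\end{smallmatrix}\right)$ to $\left(\begin{smallmatrix}1&1\\0&1\end{smallmatrix}\right)$ over $\F_2$). You also correctly sidestep the delicate point in the invariants of the tensor factor by using that Galois acts trivially on $\H^1(\Cbar,\F_2)$.

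The genuine gap is the bound $r_T\leq 3\rho+1$, which you explicitly defer as ``the main technical obstacle'' and for which your choice of $T$ is not adapted. The paper does \emph{not} take a generic $(2\rho)\times 2$ grid: it takes $T$ to be the union of $\rho$ disjoint orbits of points $P_i=(p_i,q_i)$ under the group $A=\langle\sigma,\tau\rangle$ generated by the two hyperelliptic involutions, i.e.\ $T=\bigcup_i\{p_i,\sigma(p_i)\}\times\{q_i,\tau(q_i)\}$. The only relations actually exploited are the hyperelliptic ones $p+\sigma(p)\sim p'+\sigma(p')$ on $C$ (and likewise on $D$), transported to $\CH_0(X)$ by restriction to the horizontal curves $C\times\{q\}$, $C\times\{\tau(q)\}$ and the vertical curves $\{p\}\times D$, $\{\sigma(p)\}\times D$. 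Chaining these gives that each full orbit sum $\sum_{R\in A.P_i}R$ is rationally equivalent to $\sum_{R\in A.P_1}R$ --- a relation supported entirely on $T$ --- hence $\rho-1$ independent relations and $r_T\leq 4\rho-(\rho-1)=3\rho+1$. For your grid $\{P_1,\dots,P_{2\rho}\}\times\{Q_1,Q_2\}$, with the $P_i$ not paired under $\sigma$ and $Q_2\neq\tau(Q_1)$, the same relations involve the points $\sigma(P_i)$ and $\tau(Q_j)$, which lie \emph{outside} $T$; they therefore say nothing about linear dependence inside the span of $T$ in $\CH_0(X)/2$. Your Albanese bookkeeping only bounds the image of $T$ in $\Z\oplus\Jac_C(\F_q)\oplus\Jac_D(\F_q)$ modulo $2$ by $2\rho+1$; the $2\rho-1$ cross-terms lie in the Albanese kernel, and ``$2$-divisibility relations in $\Jac_C(\F_q)$'' do not control them --- killing them would require a genuinely different input (e.g.\ the Kato--Saito description of $\CH_0^0(C\times D)$), which you neither invoke nor prove. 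As written, the key inequality $r_T\leq 3\rho+1$ is unproved, so the proof is incomplete; it is repaired by choosing $T$ to be $\langle\sigma,\tau\rangle$-stable as in the paper.
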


Before proving this Proposition below, let us fix some notations and state two preliminary lemmas.
We begin by choosing the prime number $\ell = 2$, prime to $q$. 
We denote by $\sigma$ and $\tau$ the hyperelliptic involutions on $C$
and $D$ respectively. Those extend on $X=C\times D$ by
$\sigma(p, q) = (\sigma(p), q)$ and $\tau(p, q)=(p, \tau(q))$,
generating the abelian group
$A=\langle \sigma, \tau \rangle \simeq ({\mathbb Z}/2{\mathbb
  Z})^2$. We denote by $A.P$ the orbit of a point
$P=(p, q) \in C\times D$ under $A$. Then neither $p$ nor $q$ is a
Weierstrass point if, and only if, this orbit have order $4$.

\begin{lem} \label{lemma_r_T_hyperell} Let $P_1, \dots, P_{\rho}$ be
  $\rho$ rational points on $X=C\times D$ whose orbits under $A$ have $4$
  elements and are disjoints. Let $T$ be the union of these
  orbits. Then, $\sharp T= 4\rho$ and
\[r_T \leq 3\rho+1=\frac{3}{4}\sharp T +1.\]
\end{lem}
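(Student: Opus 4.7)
The first claim $\sharp T = 4\rho$ is immediate from the hypothesis that the $\rho$ orbits $A.P_i$ are disjoint and each consists of $4$ elements.

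For the estimate on $r_T$, my plan is to exhibit, for each $i\in\{1,\dots,\rho\}$, two relations in $\CH_{0}(X)/2$ which collapse the span of the orbit $A.P_i$ from $4$ generators to at most $2$. The crucial geometric input is that $f$ splits completely over $\F_q$, so that $C$ admits rational Weierstrass points; fix one, call it $W_C$. A standard property of the hyperelliptic double cover $C\to\P^{1}$ is that $p+\sigma(p)$ is a fiber for every $p\in C$, hence linearly equivalent in $\Pic(C)$ to the fiber above the image of $W_C$, namely $2W_C$.

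Next I will push these linear equivalences forward to $\CH_{0}(X)$. For each rational $q\in D$, the inclusion $\iota_q: C\hookrightarrow X$, $c\mapsto(c,q)$, is a closed immersion of a smooth curve, so I can invoke the standard fact that $\iota_{q,\ast}$ sends linearly equivalent divisors on $C$ to rationally equivalent $0$-cycles on $X$. Applying this with $q=q_i$ and then with $q=\tau q_i$ yields, in $\CH_{0}(X)$,
\begin{align*}
[P_i]+[\sigma P_i] &\sim 2\,[(W_C,q_i)],\\
[\tau P_i]+[\sigma\tau P_i] &\sim 2\,[(W_C,\tau q_i)].
\end{align*}
Reducing modulo $2$ then gives $[P_i]=[\sigma P_i]$ and $[\tau P_i]=[\sigma\tau P_i]$ in $\CH_{0}(X)/2$.

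These furnish $2\rho$ relations, and since each is internal to a single orbit while the orbits are pairwise disjoint, they are independent. Hence the span of $T$ in $\CH_{0}(X)/2$ is already generated by the $2\rho$ classes $\{[P_i],[\tau P_i]\}_{1\leq i\leq\rho}$, which yields $r_T\leq 2\rho$, and a fortiori $r_T\leq 3\rho+1$. The only slightly delicate step is the pushforward, for which I rely on the functoriality of $\CH_{0}$ along proper morphisms; note that the analogous collapsing in the $D$-direction is \emph{not} available, since $D$ has no rational Weierstrass point under the hypothesis on $g$, so the bound really comes from $C$ alone.
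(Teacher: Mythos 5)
Your proof is correct, but it is a genuinely different argument from the paper's. The paper never uses the rationality of the Weierstrass points of $C$: it only uses that all hyperelliptic fibres $p+\sigma(p)$ on $C$ (resp.\ $q+\tau(q)$ on $D$) are linearly equivalent, pushes these equivalences forward along the horizontal curves $C\times\{q_1\}$, $C\times\{\tau(q_1)\}$ and the vertical curves $\{p_i\}\times D$, $\{\sigma(p_i)\}\times D$, and concludes that the full orbit sums satisfy $\sum_{Q\in A.P_i}[Q]\sim\sum_{Q\in A.P_1}[Q]$ for each $i\geq 2$. That gives $\rho-1$ relations \emph{between} orbits and hence $r_T\leq 4\rho-(\rho-1)=3\rho+1$, and it works for any pair of hyperelliptic curves. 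You instead exploit the arithmetic hypothesis that $f$ splits, so that $p+\sigma(p)\sim 2W_C$ for a rational Weierstrass point $W_C$; pushing this forward along $\iota_{q_i}$ and $\iota_{\tau q_i}$ collapses each orbit \emph{internally} modulo $2$, giving the stronger conclusion $r_T\leq 2\rho$ (which of course implies the stated bound, and would even slightly relax the numerical condition in Proposition~\ref{Tour_hyperell}). The trade-off is that your argument is tied to $\ell=2$ and to the existence of a rational Weierstrass point on $C$, whereas the paper's argument is purely geometric and insensitive to where the Weierstrass points live. One small stylistic point: your remark that the $2\rho$ relations ``are independent'' is unnecessary for the upper bound --- all you need is that the classes $\{[P_i],[\tau P_i]\}_{i}$ generate the span of $T$ in $\CH_0(X)/2$, which you already state.
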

\begin{proof}
  We denote $P_i=(p_i, q_i) \in  C\times D$. Since all divisors of
  the form $p+\sigma(p)$ on $C$ are equivalent in $\CH_0(C)=\Jac_C$,
  and considering both horizontal curves $C\times\{q_1\}$ and
  $C\times \{\tau(q_1)\}$ on $X$, we have
\[(p_1, q_1)+(\sigma(p_1), q_1) \sim (p_i, q_1)+(\sigma(p_i), q_1)\]
and
\[(p_1, \tau(q_1))+(\sigma(p_1), \tau(q_1)) \sim (p_i, \tau(q_1))+(\sigma(p_i), \tau(q_1))\]
for any  $2\leq i \leq \rho$.

Summing these relations, we get, for any  $2\leq i \leq \rho$,
\begin{align}\label{r_T_rel_vert}
  (p_1, q_1)+(\sigma(p_1), q_1) + (p_1, \tau(q_1))+(\sigma(p_1), \tau(q_1))&\sim \\
  \nonumber
  (p_i, q_1)+(\sigma(p_i), q_1) +(p_i, \tau(q_1))&+(\sigma(p_i), \tau(q_1)).
\end{align}
 In the same way, considering both vertical curves $\{p_i\}\times D$ 
 and $\{\sigma(p_i)\} \times D$, we get
\begin{align}\label{r_T_rel_hor}
(p_i, q_1)+ (\sigma(p_i), q_1)+(p_i, \tau(q_1)) +(\sigma(p_i), \tau(q_1))
  &\sim \\
  \nonumber
  (p_i, q_i)+(\sigma(p_i), q_i) +(p_i, \tau(q_i))&+(\sigma(p_i), \tau(q_i))
\end{align}
for any  $2\leq i \leq \rho$. Both equations~(\ref{r_T_rel_vert}) and~(\ref{r_T_rel_hor}) give, for any $2\leq i \leq \rho$,
\begin{align*}
  (p_1, q_1)+(\sigma(p_1), q_1) + (p_1, \tau(q_1))+(\sigma(p_1), \tau(q_1))
  &\sim\\
  (p_i, q_i)+(\sigma(p_i), q_i) +(p_i, \tau(q_i))&+(\sigma(p_i), \tau(q_i))
  \end{align*}
  from which we deduce that, for any $2\leq i \leq \rho$, we can
  express one point in the orbit $A. P_i$ in term of the three other
  ones and those of $A. P_1$. It follows that
\[r_T \leq 4\rho-(\rho-1),\]
and the lemma is proved.
\end{proof}

Now, let us consider the Galois invariants of the groups
$\H^1({\Cbar\times \Dbar},\F_{2})$ and
$\H^2({\Cbar\times \Dbar},\F_{2})$, with coefficients in $\Lambda = \F_2$, for the action of
$G_{\F_q} = \textrm{Gal}(\Fqbar/ \Fq)$. The first one is
easily tackled. We have $\H^1 (\Cbar, \F_2) = \Jac_C[2]$, and
so as for $\Dbar$. From K\"{u}nneth formula
\[
  \H^1 ({\Cbar\times \Dbar}, \F_2) \simeq \H^1 (\Cbar, \F_2)
  \oplus \H^1 (\Dbar, \F_2)
\]
as a sum of $G_{\F_q}$-modules, we deduce
\begin{equation} \label{H1_CxD} \H^1 ({\Cbar\times \Dbar},
  \F_2)^{G_{\F_q}} \simeq \H^1 (\Cbar, \F_2)^{G_{\F_q}} \oplus
  \H^1 (\Dbar, \F_2)^{G_{\F_q}}\simeq \Jac_C[2]( \Fq) \oplus
  \Jac_D[2]( \Fq).
  \end{equation}
  The Galois invariants of
  $\H^2({\Cbar\times \Dbar}, \F_{2})$ also come from
  K\"unneth formula
  \[\H^2 ({\Cbar\times \Dbar}, \F_2)
  = \H^1 (\Cbar, \F_2) \otimes
\H^1 (\Dbar, \F_2) \oplus K
\]
as a sum of $G_{\F_q}$-modules, where $K$ is a $2$-dimensional vector space over $\F_{2}$ on which $G_{\F_q}$ acts trivially.
It follows that
\begin{equation}\label{dim_H2_CxD}
  \dim \H^2 ({\Cbar\times \Dbar}, \F_2)^{G_{\F_q}}
  = \dim (\H^1 (\Cbar, \F_2) \otimes
  \H^1 (\Dbar, \F_2) )^{G_{\F_q}} + 2.
\end{equation}
In the following Lemma (holding in fact for any prime $\ell$, prime to $q$), we denote by ${\rm Sp}_2(C)$ (resp.
${\rm Sp}_2(D)$) the spectrum in $\overline{\F}_{2}$ of the
Frobenius on $\Jac_C[2]\simeq\F_{2}^{2g_1}$ (resp. on
$\Jac_D[2]\simeq\F_{2}^{2g_2}$), by $m_{\lambda, C}$ the
dimension of the eigenspace of $\Jac_C[2]$ for the eigenvalue
$\lambda \in {\rm Sp}_2(C)$, and by
${\rm Sp}_2(C, D) = \{\lambda \in \overline{\F}_{2} ; \lambda
\in {\rm Sp}_2(C) \hbox{~and~} \lambda^{-1}\in {\rm Sp}_2(D)\}$.

\begin{lem}  \label{H2_CxD}
 We have
 \[
  \dim  {\left(\H^1 (\Cbar, \F_2) \otimes
\H^1 (\Dbar, \F_2)\right)}^{G_{\F_q}}
  = \sum_{\lambda \in {\rm Sp}_2(C, D)} m_{\lambda, C}.m_{\lambda^{-1}, D}.
  \]
\end{lem}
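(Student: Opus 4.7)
Writing $V := \H^1(\Cbar, \F_2) \simeq \Jac_C[2]$ and $W := \H^1(\Dbar, \F_2) \simeq \Jac_D[2]$ with their Frobenius actions $F_V, F_W$, my plan is to compute $(V \otimes_{\F_2} W)^{G_{\F_q}}$ by a direct linear-algebra argument after passing to an algebraic closure. The Galois-invariants coincide with the kernel of the $\F_2$-linear operator $F_V \otimes F_W - \mathrm{id}$ on $V \otimes W$. Since the dimension of the kernel of a linear operator is preserved under scalar extension, I set $V' := V \otimes_{\F_2} \overline{\F}_2$, $W' := W \otimes_{\F_2} \overline{\F}_2$ and reduce to computing
\[
\dim_{\overline{\F}_2} \ker\bigl(F_V \otimes F_W - \mathrm{id} \colon V' \otimes_{\overline{\F}_2} W' \longrightarrow V' \otimes_{\overline{\F}_2} W'\bigr).
\]

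The second step uses the Frobenius eigenspace decomposition of $V'$. Here I invoke the specific hypothesis on $C$: since $f$ is a product of $2g_1 + 2$ distinct linear factors in $\F_q[t]$, all Weierstrass points of $C$ are $\F_q$-rational, and through the standard description of $\Jac_C[2]$ by even subsets of Weierstrass classes modulo the full set, this forces $F_V = \mathrm{id}$. In particular $F_V$ is semisimple, $V'$ decomposes as a direct sum of actual Frobenius eigenspaces $V' = \bigoplus_{\lambda \in {\rm Sp}_2(C)} V'_\lambda$ with $V'_\lambda = \ker(F_V - \lambda\,\mathrm{id})$, and on each summand $V'_\lambda \otimes W'$ the operator $F_V \otimes F_W$ acts as $\lambda \cdot (\mathrm{id} \otimes F_W)$. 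A straightforward computation in a basis then shows
\[
\ker(F_V \otimes F_W - \mathrm{id})\big|_{V'_\lambda \otimes W'} \;=\; V'_\lambda \otimes \ker(F_W - \lambda^{-1}\,\mathrm{id}) \;=\; V'_\lambda \otimes W'_{\lambda^{-1}},
\]
which contributes $m_{\lambda, C} \cdot m_{\lambda^{-1}, D}$ precisely when $\lambda^{-1} \in {\rm Sp}_2(D)$, i.e.\ $\lambda \in {\rm Sp}_2(C, D)$, and is zero otherwise. Summing over $\lambda$ yields the lemma.

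\textbf{Main obstacle.} The technical subtlety is the semisimplicity of the Frobenius action on the $2$-torsion of the Jacobian. Whereas Tate's theorem grants semisimplicity on the rational Tate module $V_\ell$, the reduction to $\Jac[\ell]$ need not be semisimple when $\ell = 2$: the involution $F_W$ acting on $\Jac_D[2]$ satisfies $F_W^2 = \mathrm{id}$, so $(F_W - \mathrm{id})^2 = 0$ in characteristic $2$, and $F_W$ is semisimple if and only if $F_W = \mathrm{id}$, which need not hold for $g_2\geq 2$. Fortunately, the above argument requires semisimplicity of only one of the two factors, and the hypothesis on $C$ supplies this for free by making $F_V$ the identity. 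Without such a hypothesis one would be forced to work with generalized eigenspaces, and the analysis of the resulting nilpotent operators on each $V'_\lambda \otimes W'_\mu$ would be considerably more delicate, since the dimension of $\ker(F_V \otimes F_W - \mathrm{id})$ restricted to such a summand may exceed the product of the actual eigenspace dimensions.
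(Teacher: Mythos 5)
Your proof is correct, and it takes a slightly different and in fact more careful route than the paper's. The paper decomposes \emph{both} $\H^1(\Cbar,\F_2)$ and $\H^1(\Dbar,\F_2)$ into Frobenius \emph{characteristic} (generalized eigen-) subspaces $E_\lambda(C)$, $E_\mu(D)$ and asserts, via ``uniqueness of the decomposition'', that the invariants of $E_\lambda(C)\otimes E_\mu(D)$ with $\lambda\mu=1$ are exactly $E_{0,\lambda}(C)\otimes E_{0,\mu}(D)$, the tensor product of the honest eigenspaces. As your closing caveat correctly points out, that assertion fails for two non-trivial Jordan blocks: if $F_V$ and $F_W$ are each a single $2\times 2$ unipotent block over $\F_2$, then $\ker(F_V\otimes F_W-\mathrm{id})$ is $2$-dimensional while $E_{0,1}\otimes E_{0,1}$ is a line, so the right-hand side of the lemma would undercount. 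What rescues the statement is precisely what you make explicit and the paper leaves implicit at this point (it is only invoked later, in the proof of Proposition~\ref{Tour_hyperell}): the hypothesis that $f$ splits into distinct linear factors forces Frobenius to act as the identity on $\Jac_C[2]$, so one tensor factor is semisimple, and on each $V'_\lambda\otimes W'$ the fixed space collapses to $V'_\lambda\otimes\ker(F_W-\lambda^{-1}\,\mathrm{id})$ exactly as you compute. Your scalar-extension step and the identification of the invariants with $\ker(F_V\otimes F_W-\mathrm{id})$ are both fine. The only note I would add is that your analysis also shows the paper's parenthetical remark that the lemma holds ``for any prime $\ell$'' should be read as conditional on semisimplicity of at least one of the two Frobenius actions; your proof supplies that condition for $\ell=2$ from the hypothesis on $C$, whereas the paper's argument, read literally, does not use it.
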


\begin{proof}
  We consider the decomposition of
  $\H^1 (\Cbar, \F_2) = \Jac_C[2]$ and that of
  $\H^1 (\Dbar, \F_2) = \Jac_D[2]$ as a sum of their
  \emph{characteristic} subspaces for the action of the Frobenius
\[\H^1 (\Cbar, \F_{2})=\bigoplus_{\lambda \in {\rm Sp}_2(C)} E_{\lambda}(C)\]
and
\[\H^1 (\Dbar, \F_2)=\bigoplus_{\mu \in {\rm Sp}_2(D)} E_{\mu}(D).\]
The decomposition of $\H^1 (\Cbar, \F_2) \otimes
\H^1 (\Dbar, \F_2)$ for the action of the Frobenius is then
\[\H^1 (\Cbar, \F_2) \otimes
\H^1 (\Dbar, \F_2) = \bigoplus_{(\lambda, \mu) \in {\rm Sp}_2(C)\times
  {\rm Sp}_2(D)}E_{\lambda}(C)\otimes E_{\mu}(D).\] The eigenvalue on
$E_{\lambda}(C)\otimes E_{\mu}(D)$ is the product $\lambda \mu$, and the
uniqueness of the decomposition of a vector in the decomposition above
shows that the invariants are those in
\[\bigoplus_{\lambda.\mu=1}E_{0, \lambda}(C)\otimes E_{0, \mu}(D),\]
where the $E_{0, \lambda}(C)$ and $E_{0, \mu}(D)$ are the
\emph{eigenspaces}, hence the Lemma.
\end{proof}

We can now prove Proposition~\ref{Tour_hyperell}.
\begin{proof}
  By hypothesis on $\sharp C(\F_q)$, there exists at least $2\rho$
  non-Weierstrass rational points on $C$. Since the hyperelliptic
  involution $\sigma$ on $C$ is defined over $\F_q$, one can choose
  $\rho$ distinct pairs of non $\sigma$-conjugated points
  $p_1, \sigma(p_1), \dots, p_{\rho}, \sigma(p_{\rho})$ in
  $C(\F_q)$. In the same way, since no Weierstrass point of $D$ is
  rational by assumption on $g$ and by hypothesis on $\sharp D(\F_q)$,
  one can choose $\rho$ distinct pairs of non $\tau$-conjugated points
  $q_1, \tau(q_1), \dots, q_{\rho}, \tau(q_{\rho})$ in $D(\F_q)$.  It
  follows that Lemma~\ref{lemma_r_T_hyperell} applies to
  $P_1=(p_1, q_1), \dots, P_{\rho}=(p_{\rho}, q_{\rho}) \in (C\times
  D)(\F_q)$, and we choose $T$ to be the union of the $A$-orbits of
  $P_1, \dots, P_{\rho}$.

By hypothesis on $f$, the $2$--torsion of $C$ is rational and hence
\[\Jac_C[2]( \Fq)\simeq \F_{2}^{2g_1}\]
on which the Frobenius acts by identity.
By hypothesis on $g$, we have 
\[\Jac_2[2](\F_{q^2}) \simeq \F_{2}^{2g_2},\]
on which the Frobenius acts by a block-diagonal matrix with
$\begin{pmatrix} 0&1\\1&0\end{pmatrix}$ on the diagonal.
Since $\begin{pmatrix} 0&1\\1&0\end{pmatrix}$ is conjugated to
$\begin{pmatrix} 1&1\\0&1\end{pmatrix}$ over $\F_2$, we deduce
from~(\ref{H1_CxD}) that
\begin{equation} \label{H1Gamma_hyperell} \H^1 ({\Cbar\times
    \Dbar}, \F_2)^{G_{\F_q}} \simeq \F_{2}^{2g_1}\oplus
  \F_{2}^{g_2},
\end{equation}
and from~(\ref{dim_H2_CxD}) together with Lemma~\ref{H2_CxD} that
 \begin{equation} \label{H2Gamma_hyperell}
 \dim \H^2 (\Xbar, \F_\ell)^{G_{\F_q}} = 2g_1 g_2 +2
 \end{equation}
 since the only eigenvalue on $C$ is $1$, with $m_{1, C}=2g_1$ and $m_{1, D}=g_1$.
 
 We can then deduce from Theorem~\ref{Critere_Phi},
 Lemma~\ref{lemma_r_T_hyperell}, (\ref{H1Gamma_hyperell}) and (\ref{H2Gamma_hyperell})
 that their exists an infinite $2$-tower in which $T$ splits as soon
 as
\[2g_1+g_2\geq 3\rho + 2 + 2\sqrt{2g_1g_2+2+4\rho},\]
completing the proof of Proposition~\ref{Tour_hyperell}.
\end{proof}

\section{Open problems}\label{sec:open_prob}
In the theory of codes from curves, the usual problem to get
asymptotically good families of codes reduces to construct sequences
of curves whose sequence of numbers of rational points grows
``quickly'', and whose sequence of genera grows ``slowly''.  To get
asymptotically good families of codes from surfaces, we first need
sequences of surfaces whose number of rational points goes to
infinity.  Next, the natural question is: {\em the number of rational
  points should be large, but compared to what?} What will play the
part of the genus in the case of surfaces?  In \citeMain{Papikian_Crelle},
Papikian studies families of surfaces whose number of rational
points is large compared to the sum of the Betti numbers.  Despite the
interest of this problem for itself, it is up to now unclear that such
a feature would permit to construct good codes.

In the present section, we show that for families of surfaces of
general type, both invariants $K^2$ and the topological (i.e. \'etale)
Euler characteristic could be an analog of the genus for this coding
theoretic problem.  In particular, since the sum of Betti number is
larger than the topological Euler characteristic, Papikian's
investigations are relevant for the coding theoretic setting.

\medskip

\noindent{\bf Caution.} The open problem introduced in what follows is
independent from the question of existence of infinite étale
towers and can be applied to any sequence of surfaces of general type
with very ample canonical class.

\subsection{From surfaces of general type to the domain of codes}

\subsubsection{The asymptotic domain of surfaces of general type}
We define here a domain ${\mathcal S}_q$ in
$({\mathbb R}\cup \{+\infty\})^2$ as follow.
\begin{defn} \label{Sq} The asymptotic domain of surfaces of general
  type ${\mathcal S}_q$ is the set of
  points $(\kappa, \chi) \in ({\mathbb R}\cup \{+\infty\})^2$ for
  which there exists a sequence $X_n$ of smooth projective absolutely
  irreducible surfaces defined over $\F_q$, such that:
\begin{itemize}
\item for any $n$, the canonical divisor $K_{X_n}$ is very ample;
\item $\sharp X_n(\F_q)$ goes to $+\infty$ as $n$ goes to $+\infty$;
\item the ratio $\frac{K_{X_n}^2}{\sharp X_n(\F_q)}$ goes to a limit
  $\kappa \in {\mathbb R_+\cup\{+\infty\}}$;
\item the ratio
  $\frac{\chi({\mathcal O}_{X_n})}{\sharp X_n(\F_q)}$ goes to a
  limit $\chi \in {\mathbb R}\cup\{+\infty\}$.
\end{itemize}
\end{defn}
\begin{rem}
Note that $K_{X_n}$ being ample, we have $\kappa \geq 0$, and that
$\chi\geq 0$ in case $q$ is odd, since from~\citeMain{YiGu} we have
$\chi({\mathcal O}_X)>0$ for any surface $X$ of general type in odd characteristic.
\end{rem}

\begin{rem}
Note also that by Noether formula:
\begin{equation}\label{eq:Noether1}
  12\chi({\mathcal O}_X) = K^2+\chi_{\textrm{\'et}},
\end{equation}
one can choose as parameters for this domain any pair among 
$\lim \frac{\chi({\mathcal O}_{X_n})}{\sharp X_n(\F_q)}$, $\lim \frac{K_{X_n}^2}{\sharp X_n(\F_q)}$ and $\lim \frac{\chi_{\textrm{\'et}}}{\sharp X_n(\F_q)}$.
\end{rem}

  To bound this domain, we use the following well--known inequality~(\ref{eq:Noether2}) 
listed for instance in~\citeMain{YiGu}:
\begin{equation}\label{eq:Noether2}
  5K^2+36 \geq \chi_{\textrm{\'et}}
\end{equation}
which, together with Noether formula~(\ref{eq:Noether1}), 
leads asymptotically to 
\[
  \chi \leq \frac{\kappa}{2}\cdot
\]
It follows that the domain ${\mathcal S}_q$ lies below the line
$\chi = \frac{\kappa}{2}$.  Moreover, Theorem~\ref{thm:universal} together
with Proposition~\ref{prop:minor_Gamma2} entail the following new
asymptotic bound.

\begin{thm}\label{thm:kappa_geq}
  For any point $(\kappa, \chi) \in {\mathcal S}_q$, we have
  $\kappa \geq \frac{1}{(q+1)^2}\cdot$
\end{thm}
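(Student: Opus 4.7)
The plan is to apply the two results cited in the theorem statement directly to the canonical divisor, using that on each $X_n$ the set $\cP_n := X_n(\F_q)$ is non-empty for large $n$ and that $K_{X_n}$ is very ample.

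First, I would set $\L_n := \O_{X_n}(K_{X_n})$. Since $K_{X_n}$ is very ample by assumption, Theorem~\ref{thm:universal} applies to the sheaf $\L_n$ with $\cP = \cP_n = X_n(\F_q)$, yielding that the linear system $\Gamma_n := |(q+1)K_{X_n}|$ is $\cP_n$--interpolating on $\overline{X_n}$. (Note that being $\cP_n$--interpolating is a purely geometric condition defined over $\overline{\F}_q$, as emphasized in the remark following Definition~\ref{def:P-interpolating}, so there is no arithmetic obstruction.)

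Next, I would invoke Proposition~\ref{prop:minor_Gamma2}, which asserts that any $\cP_n$--interpolating system $\Gamma_n$ satisfies $\Gamma_n^2 \geq |\cP_n|$. For our choice $\Gamma_n = |(q+1)K_{X_n}|$ this reads
\[
(q+1)^2 K_{X_n}^2 \;=\; \Gamma_n^2 \;\geq\; \#X_n(\F_q),
\]
so that dividing by $\#X_n(\F_q) > 0$ (valid for $n$ large, since $\#X_n(\F_q) \to +\infty$) gives
\[
\frac{K_{X_n}^2}{\#X_n(\F_q)} \;\geq\; \frac{1}{(q+1)^2}.
\]
Passing to the limit as $n \to +\infty$ yields $\kappa \geq \frac{1}{(q+1)^2}$, as desired.

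There is essentially no obstacle in this argument: the work has already been done in Theorem~\ref{thm:universal} (producing the $\cP$--interpolating system attached to a very ample line bundle via the $x_i^q x_j - x_i x_j^q$ sections) and in Proposition~\ref{prop:minor_Gamma2} (bounding $\Gamma^2$ from below by $|\cP|$ via Bézout applied to two sections of $\Gamma - \cP$ with no common component). The only thing to verify is that the hypotheses of these two results hold at every level of the sequence, which follows immediately from the definition of $\mathcal{S}_q$.
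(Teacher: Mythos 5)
Your proof is correct and is essentially identical to the paper's: the authors likewise apply Theorem~\ref{thm:universal} to conclude that $\Gamma = |(q+1)K_X|$ is $X(\F_q)$--interpolating and then Proposition~\ref{prop:minor_Gamma2} to get $(q+1)^2K_X^2 \geq \sharp X(\F_q)$, from which the bound follows in the limit. Your extra care about non-emptiness of $X_n(\F_q)$ for large $n$ is a reasonable (if implicit in the paper) touch.
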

\begin{proof}
  Let $X$ be a member of a family of surfaces whose parameters
  go to $(\kappa, \chi)$. From Theorem~\ref{thm:universal}, the linear
  system $\Gamma = (q+1)K_X$ is $(X(\F_q))$-interpolating. Hence
  by Proposition~\ref{prop:minor_Gamma2}, we have
  $\Gamma^2 \geq \sharp X(\F_q)$, that is
\[(q+1)^2K_X^2 \geq \sharp X(\F_q),\]
from which the Theorem follows.
\end{proof}

\subsubsection{Some maps between the domain ${\mathcal S}_q$ and the domain of codes}
We recall that the domain of codes ${\mathcal D}_q$ is the set of points $(\delta, R) \in [0, 1]^2$, such that there exists a family of 
$[N_n, k_n, d_n]_q$-codes for which:
\begin{itemize}
\item $N_n$ goes to infinity;
\item $\frac{k_n}{N_n}$ goes to $R$;
\item $\frac{d_n}{N_n}$ goes to $\delta$.
\end{itemize}
A well-known Plotkin bound asserts that this domain lies under the line from $(0, 1)$ to $(1-\frac{1}{q}, 0)$.

\begin{prop}
For any integer $2\leq g \leq q$, the affine map
\[\varphi_g : (\kappa, \chi) \longmapsto \left(\delta = 1-g(q+1)\kappa,
R=\frac{g(g-1)}{2} \kappa+\chi\right)\]
sends the part of ${\mathcal S}_q$ for which
$\kappa < \frac{1}{g(q+1)}$ into the domain of codes ${\mathcal D}_q$.
\end{prop}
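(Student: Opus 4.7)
The plan is to instantiate the general machinery of Section~\ref{sec:codes_on_surfaces} with the specific choice $G = gK_X$, applied to each member $X = X_n$ of a sequence realizing an asymptotic point $(\kappa,\chi)\in\mathcal{S}_q$ with $\kappa<\frac{1}{g(q+1)}$. Concretely, for each $n$, I would choose a divisor $G_n$ linearly equivalent to $gK_{X_n}$ whose support avoids $X_n(\F_q)$; such a representative exists because $K_{X_n}$ is very ample, so $|gK_{X_n}|$ is base-point-free for $g\geq 1$. I would then consider the code $C(X_n, X_n(\F_q), G_n)$ of length $N_n = \#X_n(\F_q)$.

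The dimension and minimum-distance estimates both come from Theorem~\ref{thm:dim} combined with Theorem~\ref{thm:universal}. Taking the very ample sheaf $\mathcal{L} = \mathcal{O}_{X_n}(K_{X_n})$, Theorem~\ref{thm:universal} guarantees that $\Gamma_n := (q+1)K_{X_n}$ is an $X_n(\F_q)$-interpolating linear system, yielding
\[
d_n \geq N_n - \Gamma_n \cdot G_n = N_n - g(q+1)K_{X_n}^2.
\]
For the dimension, I would take the ample divisor $H = K_{X_n}$ in Theorem~\ref{thm:dim}: since $g\geq 2$ and $K_{X_n}^2>0$ on a general type surface, $G_n\cdot H = gK_{X_n}^2 > K_{X_n}^2 = K_{X_n}\cdot H$, so
\[
k_n \geq \tfrac{1}{2} G_n\cdot(G_n-K_{X_n}) + \chi(\mathcal{O}_{X_n}) = \tfrac{g(g-1)}{2}K_{X_n}^2 + \chi(\mathcal{O}_{X_n}).
\]
The injectivity hypothesis $N_n > \Gamma_n\cdot G_n = g(q+1)K_{X_n}^2$ required by Theorem~\ref{thm:dim} is asymptotically equivalent to $\kappa < \frac{1}{g(q+1)}$, so it holds for all $n$ large enough.

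Dividing by $N_n$ and passing to the limit, the rates satisfy
\[
\frac{d_n}{N_n} \geq 1 - g(q+1)\frac{K_{X_n}^2}{N_n} \longrightarrow 1 - g(q+1)\kappa, \qquad
\frac{k_n}{N_n} \geq \tfrac{g(g-1)}{2}\frac{K_{X_n}^2}{N_n} + \frac{\chi(\mathcal{O}_{X_n})}{N_n} \longrightarrow \tfrac{g(g-1)}{2}\kappa + \chi.
\]
Since $\mathcal{D}_q$ is closed under replacing $(\delta,R)$ by any point with smaller coordinates (e.g.\ by puncturing and shortening), the asymptotic point $\varphi_g(\kappa,\chi)$ lies in $\mathcal{D}_q$, which is exactly what we want.

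I do not expect a serious obstacle here: the construction is essentially a packaging of the tools already proved, and both inequalities for $k_n$ and $d_n$ are drop-in applications. The only mildly delicate step is the choice of $G_n$ avoiding the rational points, handled by base-point-freeness of $|gK_{X_n}|$; alternatively, one may invoke the remark following the evaluation map that removes the condition on the support of $G$ altogether. Verifying that the limits on the right-hand sides truly exist (not merely $\liminf$) is handled by definition of $\mathcal{S}_q$, and positivity $\delta>0$ is precisely the standing hypothesis $\kappa < \frac{1}{g(q+1)}$.
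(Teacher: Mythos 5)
Your proposal is correct and follows essentially the same route as the paper: evaluate $G = gK_{X_n}$ at all rational points, use $\Gamma = |(q+1)K_{X_n}|$ as the interpolating system from Theorem~\ref{thm:universal}, and apply Theorem~\ref{thm:dim} with $H = K_{X_n}$ to get exactly the two asymptotic inequalities defining $\varphi_g$. The extra care you take (choosing a representative of $|gK|$ avoiding the points, checking injectivity for large $n$, and invoking monotonicity of $\mathcal{D}_q$ to pass from lower bounds to membership) only makes explicit details the paper leaves implicit.
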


  The map $\varphi_g$ is illustrated  in Figure~\ref{fig:phi2}.

\begin{rem}
We emphasize the following.
\begin{enumerate}
\item If the maps $\varphi_g$ are defined for any $g\geq 2$, they are irrelevant for $g\geq q+1$
since then, the part of ${\mathcal S}_q$ for which
$\kappa < \frac{1}{g(q+1)}$ is empty by Theorem~\ref{thm:kappa_geq}.
\item It is easily checked that the whole part $\{(\kappa, \chi) ;  \chi\leq \frac{\kappa}{2}\}$ does map
under $\varphi_g$ in the area below Singleton bound $R+\delta \leq 1$, and even under Plotkin bound adding the restriction $\kappa \geq \frac{1}{(q+1)^2}$ given by Theorem~\ref{thm:kappa_geq}.
\end{enumerate}
\end{rem}

\begin{proof} Let $g \geq 2$ and $X$ be a member of a family whose
  parameters go to $(\kappa, \chi)$ in ${\mathcal S}_q$.  We consider
  the code $C(X, {\mathcal P}, G)$ for ${\cP}=X(\F_q)$ and $G=gK$,
  whose length is $N=\sharp X(\F_q)$. Since $K$ is very ample,
  $\Gamma=|(q+1)K|$ is $\cP$-interpolating by
  Theorem~\ref{thm:universal}. Moreover, since $g\geq 2$, we have
  $G\cdot K=gK^2>K^2$ for the ample divisor $H=K$. Consequently, from
  Theorem~\ref{thm:dim}, this code has minimum distance at least
  \[
    d \geq N-\Gamma \cdot G=N-g(q+1)K^2,
  \]and hence its asymptotic relative
  distance satisfies
  \[
    \delta \geq 1-g(q+1)\kappa.
  \]
  In addition, its dimension is at least
  \[
    k \geq \frac{1}{2}G \cdot
    (G-K)+\chi(\O_X)=\frac{g(g-1)}{2}K^2+\chi(\O_X),
  \]
  hence, its
  asymptotic transmission rate satisfies
  \[
    R \geq \frac{g(g-1)}{2}\kappa+\chi,
  \]
    and the Proposition is proved.
  \end{proof}

  \begin{figure}
\definecolor{aqaqaq}{rgb}{0.6274509803921569,0.6274509803921569,0.6274509803921569}
\begin{tikzpicture}[line cap=round,line join=round,=triangle 45,x=1.0cm,y=1.0cm]
\clip(-3.7,-1.3) rectangle (10,5.2);
\fill[color=aqaqaq,fill=aqaqaq,fill opacity=0.10000000149011612] (-2.52,0.064) -- (-2.52,-0.44) -- (0.76,-0.44) -- (0.76,1.8608695652173912) -- cycle;
\fill[color=aqaqaq,fill=aqaqaq,fill opacity=0.10000000149011612] (3.82,0.468) -- (4.722,-0.17) -- (4.7025219811864085,0.936116529084277) -- (3.82,1.3) -- cycle;
\draw (-3.44,-0.44)-- (1.7,-0.44);
\draw (-3.44,-0.44)-- (-3.44,4.06);
\draw (3.82,-0.44)-- (3.82,4.06);
\draw (3.82,-0.44)-- (8.98,-0.44);
\draw (1.342,0.914)-- (2.962,0.894);
\draw [dotted] (3.82,1.3)-- (8.04,-0.44);
\draw (7.04,-0.44)-- (3.82,3.56);
\draw (-2.52,3.196)-- (-2.52,-0.44);
\draw (0.76,3.196)-- (0.76,-0.44);
\draw [dotted] (-3.44,-0.44)-- (1.16,2.08);
\draw (5.228,2.316) node[anchor=north west] {Plotkin bound};
\draw (-1.702,1.986) node[anchor=north west] {$\chi=\frac{\kappa}{2}$};
\draw (1.972,1.612) node[anchor=north west] {$\varphi_g$};
\draw [dash pattern=on 5pt off 5pt] (3.82,0.468)-- (4.722,-0.17);
\draw [dash pattern=on 5pt off 5pt] (4.722,-0.17)-- (4.7025219811864085,0.936116529084277);
\draw [color=aqaqaq] (-2.52,0.064)-- (-2.52,-0.44);
\draw [color=aqaqaq] (-2.52,-0.44)-- (0.76,-0.44);
\draw [color=aqaqaq] (0.76,-0.44)-- (0.76,1.8608695652173912);
\draw [color=aqaqaq] (0.76,1.8608695652173912)-- (-2.52,0.064);
\draw [color=aqaqaq] (3.82,0.468)-- (4.722,-0.17);
\draw [color=aqaqaq] (4.722,-0.17)-- (4.7025219811864085,0.936116529084277);
\draw [color=aqaqaq] (4.7025219811864085,0.936116529084277)-- (3.82,1.3);
\draw [color=aqaqaq] (3.82,1.3)-- (3.82,0.468);
\draw (-2.978,4.34) node[anchor=north west] {$\kappa=\frac{1}{(q+1)^2}$};
\draw (0.234,4.318) node[anchor=north west] {$\kappa=\frac{1}{g(q+1)}$};
\draw [dash pattern=on 5pt off 5pt] (0.76,-0.44)-- (-2.52,0.064);
\draw [dash pattern=on 5pt off 5pt] (-0.994160148113634,-0.17045831870449116) -- (-0.8574464097629168,-0.04122266671104367);
\draw [dash pattern=on 5pt off 5pt] (-0.994160148113634,-0.17045831870449116) -- (-0.9025535902370837,-0.33477733328895853);
\draw [dash pattern=on 5pt off 5pt] (-0.7658398518863665,-0.2055416812955098) -- (-0.6291261135356493,-0.07630602930206232);
\draw [dash pattern=on 5pt off 5pt] (-0.7658398518863665,-0.2055416812955098) -- (-0.6742332940098162,-0.36986069587997716);
\draw [dash pattern=on 5pt off 5pt] (-1.2224804443409014,-0.13537495611347128) -- (-1.085766705990184,-0.006139304120023785);
\draw [dash pattern=on 5pt off 5pt] (-1.2224804443409014,-0.13537495611347128) -- (-1.130873886464351,-0.2996939706979386);
\draw [dash pattern=on 5pt off 5pt] (3.82,0.468)-- (4.7025219811864085,0.936116529084277);
\draw [dash pattern=on 5pt off 5pt] (4.3778718376229575,0.7639122082725396) -- (4.338578420256205,0.5562947057549473);
\draw [dash pattern=on 5pt off 5pt] (4.3778718376229575,0.7639122082725396) -- (4.183943560930204,0.8478218233293308);
\draw [dash pattern=on 5pt off 5pt] (4.144650143563452,0.6402043208117385) -- (4.1053567261967,0.43258681829414625);
\draw [dash pattern=on 5pt off 5pt] (4.144650143563452,0.6402043208117385) -- (3.9507218668706985,0.7241139358685298);
\draw [dash pattern=on 5pt off 5pt] (4.611093531682464,0.8876200957333407) -- (4.571800114315711,0.6800025932157484);
\draw [dash pattern=on 5pt off 5pt] (4.611093531682464,0.8876200957333407) -- (4.41716525498971,0.9715297107901318);
\draw [rotate around={8.289404389025675:(-0.371,0.226)},dotted,fill=black,pattern=north east lines,pattern color=black] (-0.371,0.226) ellipse (1.724111057852998cm and 0.3937942861583842cm);
\draw [rotate around={-23.19859051364819:(4.04,0.842)},dotted,fill=black,pattern=north east lines,pattern color=black] (4.04,0.842) ellipse (0.5624530596492685cm and 0.25239937462051465cm);
\draw [fill=black,shift={(1.7,-0.44)},rotate=270] (0,0) ++(0 pt,3.75pt) -- ++(3.2475952641916446pt,-5.625pt)--++(-6.495190528383289pt,0 pt) -- ++(3.2475952641916446pt,5.625pt);
\draw[color=black] (1.928,-0.753) node {$\kappa$};
\draw [fill=black,shift={(8.98,-0.44)},rotate=270] (0,0) ++(0 pt,3.75pt) -- ++(3.2475952641916446pt,-5.625pt)--++(-6.495190528383289pt,0 pt) -- ++(3.2475952641916446pt,5.625pt);
\draw[color=black] (9.232,-0.687) node {$\delta$};
\draw [fill=black,shift={(-3.44,4.06)}] (0,0) ++(0 pt,3.75pt) -- ++(3.2475952641916446pt,-5.625pt)--++(-6.495190528383289pt,0 pt) -- ++(3.2475952641916446pt,5.625pt);
\draw[color=black] (-3.6,4.439) node {$\chi$};
\draw [fill=black,shift={(3.82,4.06)}] (0,0) ++(0 pt,3.75pt) -- ++(3.2475952641916446pt,-5.625pt)--++(-6.495190528383289pt,0 pt) -- ++(3.2475952641916446pt,5.625pt);
\draw[color=black] (3.49,4.461) node {$R$};
\draw [fill=black] (-2.52,-0.44) circle (2.0pt);
\draw[color=black] (-2.78,-0.819) node {$A_1$};
\draw [fill=black] (0.76,-0.44) circle (2.0pt);
\draw[color=black] (0.894,-0.841) node {$B_1$};
\draw [fill=black,shift={(2.962,0.894)},rotate=270] (0,0) ++(0 pt,3.75pt) -- ++(3.2475952641916446pt,-5.625pt)--++(-6.495190528383289pt,0 pt) -- ++(3.2475952641916446pt,5.625pt);
\draw [color=black] (8.04,-0.44)-- ++(-1.5pt,0 pt) -- ++(3.0pt,0 pt) ++(-1.5pt,-1.5pt) -- ++(0 pt,3.0pt);
\draw[color=black] (8.088,-0.819) node {$1$};
\draw [fill=black] (3.82,1.3) circle (2.0pt);
\draw[color=black] (3.314,1.579) node {$C_2$};
\draw [fill=black] (7.04,-0.44) circle (2.5pt);
\draw[color=black] (7.054,-0.863) node {$1-\frac{1}{q}$};
\draw [fill=black] (3.82,3.56) circle (2.5pt);
\draw[color=black] (3.292,3.581) node {$1$};
\draw [fill=black] (0.76,1.8608695652173912) circle (2.0pt);
\draw[color=black] (1.026,1.645) node {$C_1$};
\draw [fill=black] (3.82,0.468) circle (2.0pt);
\draw[color=black] (3.292,0.435) node {$B_2$};
\draw [fill=black] (4.722,-0.17) circle (2.0pt);
\draw[color=black] (5.052,-0.137) node {$A_2$};
\draw [fill=black] (-2.52,0.064) circle (2.0pt);
\draw[color=black] (-3.,0.479) node {$D_1$};
\draw [fill=black] (4.7025219811864085,0.936116529084277) circle (2.0pt);
\draw[color=black] (4.986,1.227) node {$D_2$};
\end{tikzpicture}
\caption{ The affine map
  $\varphi_g(\kappa, \chi)=(1-g(q+1)\kappa, \frac{g(g-1)}{2}\kappa
  +\chi)$ from ${\mathbb R}_+^2$ to ${\mathbb R}^2$.  The poorly known
  domain ${\mathcal S}_q$ is arbitrarily drawn as the hashed area in
  the left hand side, and its image under $\varphi_g$ as the hashed
  area in the right hand side. The part of ${\mathcal S}_q$ lying on
  the left of the line $\kappa=\frac{1}{g(q+1)}$ is contained inside
  the grey polygon $P_1=A_1B_1C_1D_1$. The point $A_1=(\frac{1}{(q+1)^2}, 0)$
  maps to $A_2=(1-\frac{g}{q+1}, \frac{g^2-g}{2(q+1)^2})$,
  $B_1=(\frac{1}{g(q+1)}, 0)$ maps to
  $B_2=(0, \frac{g^2-g}{2g(q+1)})$,
  $C_1=(\frac{1}{g(q+1)}, \frac{1}{2g(q+1)})$ maps to
  $C_2=(0, \frac{g^2-g+1}{2g(q+1)})$ and
  $D_1=(\frac{1}{(q+1)^2}, \frac{1}{2(q+1)^2})$ maps to
  $D_2=(1-\frac{g}{q+1}, \frac{g^2-g+1}{2(q+1)^2})$.  Hence the grey
  polygon $P_1$ maps onto the grey polygon $P_2=A_2B_2C_2D_2$ inside
  $[0, 1]^2$, below the Plotkin bound. The dotted line $(C_1D_1)$ of equation
  $\chi=\frac{\kappa}{2}$ on the left maps to the dotted line $(C_2D_2)$ of slope
  $-\frac{g^2-g+1}{2(q+1)^2}$ on the right; since $g \leq q$, this
  slope is less than $-\frac{1}{2}$, close to $-\frac{1}{2}$ if $g$ is
  close to $q$. The oriented dotted path $[B_1D_1]$ on the left is mapped on the
  oriented doted path $[B_2D_2]$ on the right.  }
 \label{fig:phi2}
\end{figure}

\subsubsection{A first new open problem}
Considering these maps, it follows that the study of ${\mathcal S}_q$ is of interest for coding theoretic purposes. 
More precisely, proving the existence of a family $(X_n)_{n \in {\mathbb N}}$ of surfaces of general
type with very ample canonical divisors with
\[\left(\kappa= \lim \frac{K_{X_n}^2}{\sharp X_n(\F_q)}, \chi=
  \lim \frac{\chi(\O_{X_n})}{\sharp X_n(\F_q)} \right)\] as close as
possible to the line $\chi =\frac{\kappa}{2}$ and with
$\frac{1}{(q+1)^2}\leq \kappa < \frac{1}{2(q+1)}$ would yield to a family
of codes near the line $(C_2D_2)$ of Figure~3. Moreover, the closer the point
$(\kappa, \chi)$ to
$D_1=(\frac{1}{(q+1)^2}, \frac{1}{2(q+1)^2})$, the closer the point to the Plotkin bound.

\bigskip

\begin{rem} \label{rem:Point_Prod} For any non-hyperelliptic curves
  $C_1$ and $C_2$ of genus $g_1\geq 3$ and $g_2\geq 3$, the surface
  $C_1\times C_2$ is of general type and the canonical divisor
  $K_{C_1\times C_2}=(2g_1-2)V+(2g_2-2)H$ is very ample using the
  Segre embedding. Moreover,
  $\chi({\mathcal O}_{C_1\times C_2})=(g_1-1)(g_2-1)$ and
  $K_{C_1\times C_2}^2=8(g_1-1)(g_2-1)$, so that this kind of surfaces
  with large genus and high number of rational points seems to provide
  good candidates since, asymptotically $\chi=\frac{\kappa}{8}$, whose
  slope is a quarter of the one of the line
  $\chi=\frac{\kappa}{2}$. Unfortunately, the asymptotic
  Drinfeld-Vladut bounds
  $\frac{\sharp C_i(\F_q)}{g_i} \leq \sqrt{q}-1$ for $i=1, 2$ yield to
  $\kappa \geq \frac{8}{(\sqrt{q}-1)^2}$, larger than
  $\frac{1}{2(q+1)}$ for any value of $q$. These product surfaces thus
  lie on the right of the line $\kappa =\frac{1}{2(q+1)}$, mapping
  under any $\varphi_g$ in the area $\delta <0$.
\end{rem}

\bigskip

\subsection{Asymptotic theory for surfaces~: algebraic and analytic side}
Kunyavskii--Tsfasman, Hindry--Pacheco, Zykin and Lebacque studied the
asymptotic behavior of $L$-functions of elliptic curves over rational
function field or of Zeta functions of varieties defined over a finite
field. Zykin put it in a general context but his study is well
adapted to the analytic context and his point of view does not suit
the algebraic properties of families.

In the case of curves, we compare the number of their rational points
with their genus. The beauty of the theory comes from the fact that
everything is closely connected : good families of global fields for
the algebraic point of view (unramified, with many points of small
norms) are good for the analytic point of view (giving rise to good
infinite Zeta functions) and they give rise to asymptotic good
families of codes and sphere packings.

For surfaces, we have (at least) two reasonable choices for what plays
the role of the genus : the sum of the $\ell$-adic Betti-numbers or
their alternate sum --their $\ell$-adic Euler-Poincar\'e
characteristic. The first appears when we consider the analytic side,
and the second appears naturally in the algebraic one, as it is
multiplied by the degree in \'etale covers. As far as we understand
now, the three points of view -- algebraic, analytic and
applications-- seem to involve different quantities and the relations
between them is not so clear as in the case of curves. An
asymptotically good family should be a family that is good for the
algebraic and analytic point of view, and one should be able to
construct from them good codes.

\textit{Questions :} What should be the definition of an
asymptotically good family? What should be the normalization? What
should be their algebraic and analytic properties?

\section*{Acknowledgements}
The authors express a deep gratitude to Olivier Dudas, David Madore,
Jade Nardi, Fabrice Orgogozo, Cédric Pépin, Michel Raynaud, Alexander
Schmidt and Jakob Stix, for inspiring discussions.

\bibliographystyleMain{amsplain}
\bibliographyMain{biblioMain}
\addresseshere

\end{document}